\newcounter{thm}
\numberwithin{thm}{section}
\newtheorem{remark}[thm]{Remark}
\newtheorem{definition}[thm]{Definition}
\newtheorem{theorem}[thm]{Theorem}
\newtheorem{lemma}[thm]{Lemma}
\newtheorem{proposition}[thm]{Proposition}
\newcommand{\E}{\mathbb{E}}
\renewcommand{\O}{\Omega}
\newcommand{\mcR}{\mathcal{R}}
\newcommand{\mbT}{\mathbb{T}}
\newcommand{\R}{\mathbb{R}}
\newcommand{\N}{\mathbb{N}}
\newcommand{\Z}{\mathbb{Z}}
\newcommand{\D}{\Delta}
\newcommand{\esssup}{\mathrm{ess~sup}}
\DeclareMathOperator*{\supp}{supp}
\DeclareMathOperator*{\divv}{div}
\DeclareMathOperator*{\Tr}{Tr}
\DeclareMathOperator*{\curl}{curl}
\newcommand{\Id}{\mathrm{Id}}
\begin{document}

\title{Nonuniqueness in Law for Stochastic Hypodissipative Navier--Stokes Equations}
\author{Marco Rehmeier, Andre Schenke}
\email{mrehmeier@math.uni-bielefeld.de, aschenke@math.uni-bielefeld.de}
\address{Fakult\"at f\"ur Mathematik, Universit\"at Bielefeld, 33615 Bielefeld, Germany}

\keywords{Stochastic partial differential equations, fractional Navier-Stokes equations, convex integration, nonuniqueness, martingale solutions}
\subjclass{60H15; 35R60; 35Q35; 35R25}
\begin{abstract}
We study the incompressible hypodissipative Navier--Stokes equations with dissipation exponent $0 < \alpha < \frac{1}{2}$ on the three-dimensional torus perturbed by an additive Wiener noise term and prove the existence of an initial condition for which distinct probabilistic weak solutions exist. To this end, we employ convex integration methods to construct a pathwise probabilistically strong solution, which violates a pathwise energy inequality up to a suitable stopping time. This paper seems to be the first in which such solutions are constructed via Beltrami waves instead of intermittent jets or flows in a stochastic setting.
\end{abstract}

\maketitle

\section{Introduction}
In this work we are concerned with nonuniqueness for the stochastic hypodissipative (fractional) Navier--Stokes equations on the three-dimensional torus $\mathbb{T}^3$
\begin{equation}\label{HSNSE_Intro}
\begin{cases}\tag{$\text{HNSE}_{\text{st}}$}
&\partial_tv+\divv (v \otimes v)+\nabla p + (-\Delta)^{\alpha}v 
= dB,\\ 
&\text{div}(v) = 0,
\end{cases}
\end{equation}
in which, in contrast to the deterministic case, the velocity field $v$ is additionally perturbed by a Brownian motion $B$. The dissipation is fractional in the sense that it assumes the form of a fractional power $(-\D)^{\alpha}$ of the Laplacian, and hypodissipative in the sense that $\alpha \in (0,\frac{1}{2})$. In fact, we prove nonuniqueness in the class of probabilistically weak solutions (or martingale solutions) under suitable assumptions on the stochastic perturbation, as formulated in the main result, Theorem \ref{Main_Result}. In particular, our result rules out an application of the well-known Engelbert-Cherny-theory (cf. \cite{Engelbert91, Cherny01, Rehmeier21}) to prove pathwise uniqueness of \eqref{HSNSE_Intro} via the construction of probabilistic strong solutions and weak uniqueness, at least for the comparably large class of general weak solutions. For subclasses of solutions with certain energy constraints, this approach remains open.

To derive the nonuniqueness result, we construct a martingale solution to \eqref{HSNSE_Intro}, which violates an energy inequality and compare it to a second solution obtained by a classical Galerkin approximation argument that satisfies an energy inequality. Roughly speaking, the construction of the former solution consists of two independent steps: On the one hand, we construct a pathwise solution $u$ to \eqref{HSNSE_Intro} up to a bounded stopping time $\tau_L$, see Theorem \ref{Theorem_analySol}. This solution is analytically weak, but probabilistically strong, which is crucial in order to show that its law $P$ induces a martingale solution on $[0,\tau_L]$. This construction employs the so-called method of convex integration. On the other hand, we extend $P$ beyond $\tau_L$ to a solution on $[0,\infty)$. To this end, we make use of measure-theoretic and probabilistic arguments in order to concatenate the solution $P$ with a family of martingale solutions $R$ at the random time $\tau_L$ to obtain a solution $P \otimes _{\tau_L}R$, which coincides with $P$ up to $\tau_L$. The parameter $L>1$ will be chosen sufficiently large and accounts for the necessary violation of the energy inequality, see (\ref{Failure_energy_ineq}).

The pioneering idea to combine these methods led to the beautiful work \cite{HZZ_stoNSE} by Hofmanov\'{a}, Zhu and Zhu, where the seminal result of ill-posedness in law for martingale solutions to the classical stochastic Navier-Stokes equations is obtained. While \cite{HZZ_stoNSE} is one of the first works in which such techniques are employed to tackle nonuniqueness results for SPDEs, the history of convex integration techniques in the realm of fluid dynamics is much longer.

In the deterministic setting, the origins of convex integration date back at least to the work of Nash \cite{Nash54} and Kuiper \cite{Kuiper55} on $C^{1}$-isometric embeddings of Riemannian manifolds. Gromov later realized that the methods of Nash and Kuiper could be understood as instances of a more general principle called the $h$-principle, cf. \cite{Gromov86}. Building on the work of M\"uller and \v{S}ver\'{a}k \cite{MS03} and Kirchheim \cite{Kirchheim03}, De Lellis and Sz\'{e}kelyhidi \cite{DLS09} proved nonuniqueness of weak solutions to the Euler equations in $L^{\infty}$. Their work gave also new proofs of important results of Scheffer \cite{Scheffer93} and Shnirelman \cite{Shnirelman97, Shnirelman00}. However, it relied on Tartar's plane-wave analysis \cite{Tartar79} which prevented the authors from constructing continuous solutions. A breakthrough came when De Lellis and Sz\'{e}kelyhidi in \cite{DLS13} for the first time used genuinely multi-dimensional building blocks, so-called Beltrami waves, which were introduced more than two decades earlier by Constantin and Majda \cite{CM88}. Using such rapidly oscillating multi-dimensional waves enabled them to construct infinitely many solutions in $C^{1/10}$ in \cite{DLS14}, which turned out as a major step towards Onsager's conjecture from 1949 \cite{Onsager49}, one of the central longstanding unresolved conjectures in fluid dynamics. Roughly, it asserted that the Euler equations should be ill-posed in any $C^{\gamma}, \gamma < 1/3$. The seminal work \cite{DLS13} launched a race towards its rigorous proof, which proceeded along several important partial results (cf. \cite{Buckmaster14, Buckmaster15, BDLIS15, BDLS13, BDLS16, Isett13, IO16}), and was finally resolved by Isett \cite{Isett18} in the nonconservative case and by Buckmaster, De Lellis, Sz\'{e}kelyhidi and Vicol \cite{BDLSV18} in the dissipative case. Among further refinements of the original methods, crucial improvements were achieved by consideration of a new type of building blocks (Mikado flows, introduced by Daneri and Sz\'{e}kelyhidi in \cite{DS17}) and Isett's gluing technique. Shortly thereafter, Buckmaster and Vicol \cite{BV19a} introduced yet another type of building block (intermittent Beltrami flows) to prove nonuniqueness of weak solutions to the 3D Navier--Stokes equations. A profound overview of these developments can be found in \cite{BV19b}.

In the light of this extensive list of ill-posedness results, it is natural to ask whether the situation is different in the presence of stochastic forces. Indeed, as a well-known general rule, under appropriate conditions, PDEs may be regularized by introducing random forces acting on the equation in the sense that certain ill-posed deterministic problems have well-posed stochastic counterparts. In a nutshell, this phenomenon of \textit{regularization by noise} (cf. \cite{Veretennikov80, KR05, FGP10}, and \cite{Gess18} for a nice overview of the subject) works when a sufficiently active noise (e.g. acting nontrivially in sufficiently many directions) moves solutions away from singularities of the underlying vector field. In the light of such results, the above mentioned ill-posedness results obtained by convex integration techniques did not rule out the possibility of well-posedness of their stochastic counterparts. However, as mentioned above, the general hopes that (pathwise) uniqueness might hold for prominent fluid dynamical stochastic PDEs were shattered by \cite{HZZ_stoNSE}.

The present paper seeks to apply the ideas of \cite{HZZ_stoNSE} to the case of the stochastic fractional Navier--Stokes equations in the hypodissipative case. This equation was introduced by J.-L. Lions \cite{Lions69} in 1969, who proved the existence of strong solutions if $\alpha \geq \frac{5}{4}$, i.e. for a dissipation effect stronger than in the case of the classical Navier--Stokes equations (the hyperdissipative case). Later, Tao \cite{Tao09} proved that in the critical case $\alpha = \frac{5}{4}$, existence of a global regular solution still holds even when the equation is perturbed by a logarithmically supercritical operator. Partial regularity results in the flavor of Caffarelli, Kohn and Nirenberg \cite{CKN82} were provided in the works \cite{CDLM20, RWW16, TY15}. In the stochastic case, the only well-posedness result dealing explicitly with the fractional Navier--Stokes equations seems to be due to Debbi \cite{Debbi16}, who shows existence of local mild and martingale solutions as well as Beale-Kato-Majda-type criteria for uniqueness.

Unsurprisingly, the fractional Navier--Stokes equations have also been treated by convex integration techniques, both in the deterministic and the stochastic case, in two as well as in three spatial dimensions. In three dimensions, the first works in this direction are due to Colombo, De Lellis and De Rosa \cite{CDLDR18} who proved ill-posedness even in the class of Leray solutions for the hypodissipative Navier--Stokes equations with $0 < \alpha < \frac{1}{5}$. De Rosa \cite{DR19} extended this result to an Onsager-type theorem. The hyperdissipative deterministic case $\alpha \in [1, 5/4)$ was treated by Luo and Titi \cite{LT20} as well as Buckmaster, Colombo and Vicol \cite{BCV22}. Recently, Yamazaki proved nonuniqueness in law to the hyperdissipative stochastic Navier--Stokes equations. The two-dimensional deterministic case was treated by Luo and Qu \cite{LQ20}, and a stochastic result is due to Yamazaki \cite{Yamazaki20b}.

We would also like to mention the works by Chiodaroli, Feireisl and Flandoli \cite{CFF19} as well as Breit, Feireisl and Hofmanov\'{a} \cite{BFH20} on the stochastic compressible case. Furthermore we would like to mention the more recent works of Hofmanov\'{a}, Zhu and Zhu on Markov selections for stochastic Navier--Stokes equations \cite{HZZ21a} and stochastic Navier--Stokes equations with space-time white noise \cite{HZZ21b} as well as Yamazaki on stochastic Boussinesq equations \cite{Yamazaki21a} and stochastic MHD equations \cite{Yamazaki21c}.

\subsection*{Main Result} Amid these ill-posedness results, the case of the incompressible 3D stochastic hypodissipative Navier--Stokes equations for $\alpha \in (0,\frac{1}{2})$ remains open. The purpose of this paper is twofold: We treat this case by showing that martingale solutions to \eqref{HSNSE_Intro} are not unique in law, provided the stochastic forces are modeled by a $GG^*$-Wiener process for a suitable trace-class operator $GG^*$ on $L^2_{\sigma}$, i.e. more precisely we prove the following result\footnote{ Note that the independent work of Yamazaki \cite{Yamazaki21b} that was published at the same time as this article contains the same result.}.
\begin{theorem}\label{Main_Result}
	Let $\alpha \in (0,\frac{1}{2})$ and assume that for some $\sigma >0$ we have $\Tr \left[ A_{\alpha}^{\rho_{0}} G G^{*} \right] = \Tr \left[ (-\D)^{\rho_{0} \alpha} G G^{*} \right] < \infty$ for $\rho_{0} = \frac{5 + 2\sigma - 2 \alpha}{2 \alpha}$. Then
	for any $T >0$, there exist two martingale solutions to \eqref{HSNSE_Intro} on $[0,+\infty)$ with a common deterministic initial condition $x_0 \in L^2_{\sigma}$ which are distinct on $[0,T]$.
\end{theorem}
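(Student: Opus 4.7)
The plan is to exhibit two martingale solutions starting from a common deterministic $x_0 \in L^2_\sigma$ whose laws on $[0,T]$ disagree, by contrasting a convex-integration construction that violates an energy inequality against a Galerkin-based one that respects it. Since the two solutions will have different pathwise energy behaviour on a positive-probability event, their laws cannot coincide on any interval on which $\{\tau_L \wedge T > 0\}$ has positive probability.

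First I would invoke Theorem \ref{Theorem_analySol} to produce, for every sufficiently large $L>1$, a stopping time $\tau_L$ and an analytically weak, probabilistically strong solution $u$ of \eqref{HSNSE_Intro} on $[0,\tau_L]$ with $u(0)=x_0$, designed so that $u$ fails the pathwise energy inequality (\ref{Failure_energy_ineq}) on that interval; the role of $L$ is to fix the energy gap that must be overcome after subtracting the stochastic convolution $\int_0^t e^{-(t-s)(-\Delta)^\alpha}\,dB_s$. Since $u$ is adapted, its law $P$ on the canonical path space is a martingale solution on the random interval $[0,\tau_L]$. To extend $P$ to $[0,\infty)$, I would concatenate, at the stopping time $\tau_L$, with a measurable family $\{R_{(\omega,v)}\}$ of martingale solutions whose initial condition at time $\tau_L(\omega)$ is the terminal profile $u(\tau_L,\omega)$. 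Existence of such $R$ follows from a Faedo--Galerkin construction on the shifted probability space, and measurability of the selection is standard. Forming the regular conditional kernel $P\otimes_{\tau_L} R$ and verifying the local-martingale identities at time $\tau_L$ via a tower/strong-Markov argument yields a martingale solution on $[0,\infty)$ that coincides with $P$ on $[0,\tau_L]$.

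Next I would construct a second martingale solution $\widetilde P$ on $[0,\infty)$ from the same initial datum $x_0$ via a standard Faedo--Galerkin scheme. The trace-class hypothesis $\Tr[(-\Delta)^{-\rho_0\alpha}GG^*]<\infty$, which is exactly calibrated so that $\rho_0 = \frac{5+2\sigma-2\alpha}{2\alpha}$ controls the high-frequency tail of the noise compatibly with the hypodissipative energy balance, delivers the spatial regularity needed to pass to the limit and, crucially, to ensure that the limit obeys the pathwise energy inequality almost surely. Comparing the two laws on $[0,T]$: the event that the energy inequality fails on $[0,\tau_L\wedge T]$ has positive probability under $P\otimes_{\tau_L} R$ by the construction of $u$ and the choice of $L$, but vanishes under $\widetilde P$, so the two laws must differ on $[0,T]$.

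The hard part will be the convex-integration step encapsulated in Theorem \ref{Theorem_analySol}: adapting the deterministic hypodissipative scheme of \cite{CDLDR18, DR19} to the stochastic setting amounts to running an iterative Beltrami-wave construction with $C^0_t L^2$ and $C^0_t H^3$ estimates on each stage of an adapted iterate, arranging geometric decay of the Reynolds stress while simultaneously controlling the random drift through $\tau_L$, and ensuring that every iterate remains adapted so that the limit is probabilistically strong. The measure-theoretic concatenation and the comparison with the Galerkin solution are conceptually routine once that output is in hand.
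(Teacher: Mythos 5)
Your overall architecture is exactly the paper's: convex integration (Theorem \ref{Theorem_analySol}) gives a probabilistically strong solution $u$ violating the energy inequality up to $T_L$, its law $P$ is a martingale solution on $[0,\tau_L]$, one extends by concatenation $P\otimes_{\tau_L}R$ to $[0,+\infty)$, and one compares with a Galerkin solution $\tilde P$ from the same $x_0$. However, your final law-distinguishing step contains a genuine gap. You argue that the event on which the pathwise energy inequality fails has probability zero under $\tilde P$ because the Galerkin limit ``obeys the pathwise energy inequality almost surely.'' For a stochastic equation with additive noise this is not available (and is in general false): the It\^o energy balance for the Galerkin approximations contains the martingale term $2\int_0^t\langle x_n,\,dB\rangle$, whose pathwise fluctuations allow $\|x(T)\|_{L^2}^2$ to exceed $\|x_0\|_{L^2}^2+T\,\Tr(GG^*)$ with positive probability. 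What the construction actually yields (Remark \ref{Rem_simpleEnergyIneq_GalerkinSol}) is only the inequality in expectation, $\mathbb{E}_{\tilde P}\big[\|x(T)\|^2_{L^2}\big]\le \|x_0\|^2_{L^2}+T\cdot\Tr(GG^*)$, and a Chebyshev-type argument cannot upgrade this to ``the failure event is $\tilde P$-null.'' The paper closes this by comparing expectations instead: taking $K=4$ and $L$ large enough that $\mathbf{P}(T_L\ge T)>\tfrac12$, the pathwise failure \eqref{Failure_energy_ineq} on $\{T_L\ge T\}$ together with \eqref{eq_P_and_ext_measure} gives $\mathbb{E}_{P\otimes_{\tau_L}R}\big[\|x(T)\|^2_{L^2}\big]>2\big(\|x_0\|^2_{L^2}+T\cdot\Tr(GG^*)\big)$, which already contradicts the expectation bound for $\tilde P$ and hence separates the laws on $[0,T]$. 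Your argument should be repaired along these lines; as stated, the key comparison does not go through.

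A secondary point: you describe the measurable selection and concatenation at $\tau_L$ as ``standard,'' but the classical Stroock--Varadhan gluing does not apply directly here because $\tau_L$ is a stopping time only for the right-continuous filtration $(\mathcal{B}_t)_{t\ge0}$, not for $(\mathcal{B}^0_t)_{t\ge0}$. One needs the tailored statements of Propositions \ref{PropMeasurability1} and \ref{PropMeasurability2}, and in particular the verification of \eqref{PropMeasurability2_aux1} (Proposition \ref{PropMeasExtension}), to conclude both that $P\otimes_{\tau_L}R$ is a martingale solution and that it agrees with $P$ on $\sigma(\pi_{t\wedge\tau_L},\,t\ge0)$; the latter identity is precisely what you use when transferring the energy blow-up at time $T$ from $P$ to the extended measure. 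This is an omission of substance rather than a wrong step, but it should not be waved off as routine.
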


In the race towards the proof of Onsager's conjecture, a crucial step was to deal with the so-called \textit{transport error} of the Reynolds stress. This step is simplified in the Navier--Stokes equations thanks to the presence of the ``full'' Laplacian in the equation (which, however, raises other delicate issues when trying to achieve intermittency). The aforementioned recent papers of Hofmanov\'{a} \emph{et al.} and Yamazaki \cite{HZZ_stoNSE, Yamazaki20a} on convex integration in the stochastic setting mostly deal with a Navier--Stokes or Navier--Stokes-like setting. Here, we will be concerned with the hypodissipative Navier--Stokes equations for powers $0 <\alpha <\frac{1}{2}$, which is closer in spirit to the Euler equations. We expect that more careful handling of the transport error will also be crucial in order to approach a potential stochastic analog of Onsager's conjecture. This work may be considered as a first humble step into this direction. 

We study in a way the technically simplest case (Beltrami waves) to isolate the problems related to the transport as clearly as possible. To the best of our knowledge, this is the first paper to use Beltrami waves in a stochastic setting. The interplay of the phase transport with the stopping times needed for the stochastic case gives rise to a few technical issues, which we address in this work: In comparison to the deterministic case, it turns out that the right vector field along which to transport the phase is $v_{\ell}+z_{\ell}$ instead of $v_{\ell}$ (see Section \ref{Sect_Mollification}). This transport causes the constant $C_L$ defined in \eqref{def_C-L}, which cannot be absorbed into the small constant $c_R$ (see \eqref{cond_on_L}). In principle this constant could grow exponentially in the iteration $q \to q+1$, thereby hampering the convergence of the iteratively constructed approximate solutions $v_q$. However, we show that this is not the case.

Note that the noise in our main result might not be ``sufficiently'' active as we assume a decay on the modes of the noise. This leaves the possibility that uniqueness in law still holds for rougher noises.

\subsection*{Organization of the paper}\label{STMHD_ssec_results}
The rest of this paper is organized as follows. Section 2 establishes the notation of the paper and contains the definition of weak solutions (martingale solutions) we aim to study. Moreover, we present the necessary measure theoretic preparations and decompose (\ref{HSNSE_Intro} into a linear stochastic and nonlinear deterministic part. Section 3 is devoted to the convex integration methods in order to construct the analytically weak solution to the nonlinear deterministic equation. In Section 4, we use this construction and all other preparations to finally prove Theorem \ref{Main_Result}. The appendices contain the proofs of \ref{Prop_Reg_z_maintext} (\ref{AppA}) and Proposition \ref{Thm3.1.-analog} (\ref{AppB}) as well as information on Beltrami waves (\ref{App_Beltrami_waves}) and a technical prerequisite for the proofs in Section 3 (\ref{AppD}).


\section{Preliminaries}

\subsection{Notation}
We denote by $\mathbb{N}_0 := \{0,1,2,\dots\}$ the natural numbers including $0$.
For $x,y \in \mathbb{R}^d$, the standard Euclidean norm and inner product are denoted by $|x|$ and $x\cdot y$, respectively. $G^*$ denotes the adjoint of an operator $G$ and the space of Hilbert-Schmidt operators between Banach spaces $X$ and $Y$ is denoted by $L_2(X,Y)$. 
\subsubsection*{Periodic functions, Sobolev spaces and fractional Laplacian}
It is standard to identify $2\pi$-periodic functions $f: \mathbb{R}^3 \to \mathbb{C}$ with functions on the torus $\mathbb{T}^3 := S^1\times S^1 \times S^1$, where $S^1 := \{e^{i\theta}, \theta \in \mathbb{R}\}$. Therefore, denote by $C(\mathbb{T}^3, \mathbb{C}^d)$ the space of continuous $2\pi$-periodic functions $f: \mathbb{R}^3 \to \mathbb{C}^d$ and, for $k \in \mathbb{N} \cup \{\infty\}$, by $C^k(\mathbb{T}^3,\mathbb{C}^d)$ those $f$ which are $k$ times continuously differentiable. In the case $d =3$, we simply write $C^k$. Similarly, we consider $C^{\gamma}$, the space of $2\pi$-periodic H\"older-continuous functions $f: \mathbb{R}^3 \to \mathbb{C}^3$ of order $0<\gamma <1$.\\
For $p \in [1,\infty]$, let $L^p(\mathbb{T}^3, \mathbb{C})$ be the set of $dx$-equivalence classes of $p$-integrable functions $f: \mathbb{T}^3 \to \mathbb{C}$ on $[-\pi,\pi]^3$ and let $L^p(\mathbb{T}^3,\mathbb{C}^d)$ denote the space of vector fields $f = (f_1,\dots,f_d)$ with $f_i \in L^p(\mathbb{T}^3,\mathbb{C})$, which we abbreviate by $L^p$ for $d =3$. The spaces $L^p(\mathbb{T}^3,\mathbb{C}^d)$ are Banach spaces with norm $||f||^p_{L^p} := \frac{1}{(2\pi)^3}\int_{[-\pi,\pi]^3}|f|^pdx$ for $p <\infty$ and $||f||_{L^{\infty}} := \esssup_{x \in [-\pi,\pi]^3}|f(x)|$ for $p =\infty$, and Hilbert spaces for $p=2$ with scalar product $\langle f,g \rangle_{L^2}:= \frac{1}{(2\pi)^3}\int_{[-\pi,\pi]^3}f\cdot\bar{g}dx$. The symbols $||\cdot||_{L^p}$ and $\langle \cdot, \cdot \rangle_{L^2}$ are used without stressing the dimension of the target space. Furthermore, we write $L^{2}_{0} := \{f \in L^2: \int_{\mathbb{T}^{3}} f dx = 0 \}$ for the closed subspace of $3$D-vector fields with zero mean.\\
The functions $ [-\pi,\pi]^3 \ni x \mapsto e^{ik\cdot x}$ with $k \in \mathbb{Z}^3$ form an orthonormal basis in $L^2(\mathbb{T}^3,\mathbb{C})$. Therefore it holds that $f = \sum_{k \in \mathbb{Z}^3} \hat{f}_k e^{ik\cdot x}$ for $f \in L^2(\mathbb{T}^3,\mathbb{C})$, where $\hat{f}_k := \langle f, e^{ik\cdot x} \rangle_{L^2}$ denotes the \textit{$k$-th Fourier coefficient} of $f$ and the above series is the \textit{Fourier series} of $f$. The map $\mathcal{F}: L^2(\mathbb{R}^3,\mathbb{C}) \mapsto \ell^2(\mathbb{Z}^3,\mathbb{C})$, $\mathcal{F}: f \mapsto (\hat{f}_k)_{k \in \mathbb{Z}^3}$ is an isometric isomorphism of Hilbert spaces. For $L^2$, the above considerations apply component-wise.\\
Since we study incompressible equations, we introduce the closed subspace $L^2_{\sigma}$ := $\{f \in L_0^2: \text{div}(f)=0\}$, where for a generic element $f \in L_2$, $\text{div}(f)$ is understood in distributional sense. $L^2_{\sigma}$ inherits the Hilbert space-structure from $L^2$ and we denote by $\mathbb{P}: L^2 \to L^2_{\sigma}$ the usual orthogonal projection.\\
For $s \geq 0$, we consider the solenoidal (fractional) Sobolev spaces $$H^s:= H^s(\mathbb{T}^3,\mathbb{C}^3) := \bigg\{f \in L^2_{\sigma}: ||(1-\Delta)^{s/2}f||_{L^2} < \infty\bigg\} = \bigg\{f=(f_1,f_2,f_3) \in L^2_{\sigma}: \sum_{k \in \mathbb{Z}^3}(1+|k|^2)^s\hat{(f_i)}_k^2 < \infty,\, i \leq 3\bigg\},$$
which are $\mathbb{C}$-Hilbert spaces with scalar product
$$\langle f, g \rangle_{H^s} := \sum_{i\leq3}\langle (1-\Delta)^{s/2}f_i, \overline{(1-\Delta)^{s/2}g_i}\rangle_{L^2} = \sum_{i\leq3}\sum_{k \in \mathbb{Z}^3}(1+|k|^2)^s\hat{(f_i)}_k \overline{\hat{(g_i)}}_k,$$
where the equalities follow from the fact that the symbol of $(1-\Delta)^{s/2}$ as a Fourier multiplier is $(1+|k|^2)^{s/2}$. The induced norm is denoted by $||f||_{H^s}$. For $s \in \mathbb{N}$, $H^s$ coincides with the classical Sobolev space of functions with square-integrable weak derivatives up to order $s$.\\
For $s >0$ we define $H^{-s}$ as the dual of $H^s$ with the standard dual norm. For $s \geq 0$, we denote by $\langle \cdot, \cdot \rangle_{(-s,s)}: H^{-s}\times H^s \to \mathbb{C}$ the dual pairing $\langle f, g \rangle_{(-s,s)} := f(g)$.
We recall that for any $-\infty < r < s < +\infty$, we have the compact embedding $H^s \subseteq H^r$, see \cite[Eq. $(3.12)$, p. 330]{Taylor1}).\\
For $\alpha \in (0,1)$, the fractional Laplace operator $-(-\Delta)^\alpha$ is the operator with symbol $|k|^{2\alpha}$ as a Fourier multiplier, i.e. for any $f \in H^s$, $s \in \mathbb{R}$, it has the (formal) Fourier series
$$(-\Delta)^{\alpha}f(x) = \sum_{k \in \mathbb{Z}^3}|k|^{2\alpha}\hat{f}_k,$$
which is convergent if and only if $f \in H^{2\alpha}$. In particular, $(-\Delta)^{\alpha}: H^s \to H^{s-2\alpha}$, $f \mapsto (-\Delta)^{\alpha}f$ is continuous for $s \in \mathbb{R}$. We recall the following relation between $(-\Delta)^{s/2}$ and the norm $||\cdot||_{H^s}$: For $s \in (0,1)$ there is a universal constant $C=C(s)>1$ such that
$$C^{-1}\bigg(||f||^2_{L^2}+||(-\Delta)^{s/2}f||^2_{L^2}\bigg) \leq ||f||^2_{H^s}\leq C\bigg(||f||^2_{L^2}+||(-\Delta)^{s/2}f||^2_{L^2}\bigg),\quad f \in H^s.$$

\subsubsection*{Function spaces}
For $N \in \mathbb{N}_0 \cup \{\infty\}$, $0<\gamma <1$, $g \in C^{\gamma}$, $f \in C^{N}$ and $h \in C^{N+1}$ as above, we introduce the following standard (semi-)norms. 
\begin{align*}
[g]_{C^\gamma} &:= \sup_{x\neq y \in \mathbb{R}^3}|\frac{|g(x)-g(y)|}{|x-y|^{\gamma}},\quad
||g||_{C^\gamma} := [g]_{C^{\gamma}}+\sup_{x \in \mathbb{R}^3}|g(x)|,\\
[f]_{C^N} &:= \max_{|\alpha| = N}\sup_{x \in \mathbb{R}^3}|D^{\alpha}f(x)|,\quad
||f||_{C^N} := \sum_{k = 0}^N[f]_{C^N},\\
[h]_{N+\gamma} &:= \max_{|\alpha| = N}[D^{\alpha}h]_{C^\gamma}
\end{align*} where for a multiindex $\alpha \in \mathbb{N}^3$, $D^{\alpha}$ denotes the corresponding partial derivative.
For a time interval $I \subseteq \mathbb{R}_+ \cup \{\infty\}$ and a Banach space $X$, we write $C(I,X)$ for the space of continuous functions $f: I \to X$ equipped with the topology of locally uniform convergence. For vector fields $v: I \times \mathbb{R}^3 \to \mathbb{R}^3$ such that $v(t,\cdot)$ is $2\pi$-periodic for each $t \in I$ we use the notation
\begin{align*}
||v||_{C^0_IC^N_x} := \sup_{t \in I}||v(t,\cdot)||_{C^N},
\end{align*}
which we abbreviate as $||v||_{C^0_tC^N_x}$ and $||v||_{C^0_{I,x}}$ for the special cases $I = [0,t]$ and $N =0$, respectively. Similarly, for $N \in \mathbb{N}_0$, we write
\begin{align*}
||v||_{C^N_{I,x}} := \max_{k+|\alpha| \leq N}||\partial_t^kD^{\alpha}v(t,x)||_{C^0_{I,x}},
\end{align*}
again abbreviated as $C^N_{t,x}$ in the case $I = [0,t]$.

For $I \subseteq \mathbb{R}_+ \cup \{\infty\}$, a Banach space $X$ and $p \in [1,\infty]$, we denote by $L^p(I,X)$ [$L_{loc}^p(I,X)$] the usual space of equivalence classes of [locally] $p$-integrable functions $f: I \to X$ with the usual norm.

\subsubsection*{Measure theoretic and probabilistic elements} We consider the path space $\Omega_0 := C(\mathbb{R}_+,H^{-3})$ with the topology of locally uniform convergence, which renders $\Omega_0$ a Polish space with Borel-$\sigma$-algebra $\mathcal{B} = \sigma(\pi_t, t \geq 0)$, where $\pi_t : x \mapsto x(t) \in H^{-3}$ denotes the canonical projection. A generic element in $\Omega_0$ is $x =(x_t)_{t \geq 0} \in \Omega_0$. Let $(\mathcal{B}_t^0)_{t \geq 0}$ denote the filtration with $\mathcal{B}_t^0 := \sigma(\pi_s, s \leq t)$ and define $(\mathcal{B}_t)_{t \geq 0}$ as the corresponding right-continuous filtration. Furthermore, we set $\mathcal{B}^t := \sigma(\pi_r,r \geq t)$.\\
The quadratic variation process of a real-valued process $M$ on a probability space is denoted $t \mapsto \langle \langle M \rangle \rangle_t$. For a Polish space $X$ with Borel $\sigma-$algebra $\mathcal{B}(X)$, we write $\mathcal{P}(X)$ for the space of probability measures on $\mathcal{B}(X)$ endowed with the topology of weak convergence of measures. With this topology, $\mathcal{P}(X)$ is metrizable, complete and separable if so is $X$.

\subsection{Setting and Framework}\label{Sect_setting}
Let $\alpha \in (0,\frac{1}{2})$. We consider the stochastic 3D hypodissipative incompressible Navier--Stokes equation \eqref{HSNSE_Intro} on $\mathbb{T}^3$, where $B$ is a $GG^*$-Wiener process on a prescribed probability space $(\Omega,\mathcal{F},\mathbb{\mathbf{P}})$ for a Hilbert-Schmidt operator $G \in L_2(U,L^2_{\sigma})$ on some Hilbert space $U$.

For abbreviation, we set
 $$F_{\alpha}: L^2_{\sigma} \to H^{-3},\quad F_{\alpha}(y):= -\text{div}(y \otimes y)-(-\Delta)^{\alpha}y.$$
 That indeed $F_{\alpha}$ maps to $H^{-3}$ follows from Lemma \ref{Lem_ext_F-alpha}. The constant $C$ appearing in its proof is due to the Sobolev inequality $||\nabla y||_{L^{\infty}} \leq C ||y||_3$, which is one main reason for the choice of $H^{-3}$ in the definition of $\Omega_0$. Indeed, $l=3$ is the smallest integer such that the Sobolev embedding $H^{l-1} \hookrightarrow L^{\infty}$ holds.

We introduce the notion of martingale solutions to \eqref{HSNSE_Intro}, which we consider throughout this paper. 
\begin{definition}\label{DefMgSol}
Let $\gamma \in (0,1)$ and $(s, x_0) \in \mathbb{R}_+\times L^2_{\sigma}$. A probability measure $P \in \mathcal{P}(\Omega_0)$ is a martingale solution to \eqref{HSNSE_Intro} on $[s,\infty)$ with initial condition $(s,x_0)$, if
\begin{enumerate}[label = (M\arabic*)]
	\item\label{M1} $P\big(x \in \Omega_0: x(t) = x_0 \, \text{ for all } \, t \in [0,s]\big) = 1$.
	\item\label{M2} For each $e \in H^3$ the process on $\Omega_0$
	\begin{equation}\label{DefMgM2}
	M^e_s(t,x) := \langle x(t)-x_0,e \rangle_{(-3,3)} - \int_s^t \langle F_{\alpha}\big(x(r)\big),e \rangle_{(-3,3)}dr
	\end{equation}
	is a continuous real-valued, square-integrable $(\mathcal{B}_t)_{t \geq s}$-martingale with respect to $P$ with quadratic variation
	\begin{equation}
	\langle\langle M^e_s \rangle\rangle_t(x) = (t-s)||G^*e||^2_U, \quad  t\geq 0 \quad \text{ for }P-\text{a.e. }x \in \Omega_0.
	\end{equation}
	\item\label{M3} For each $q \in \mathbb{N}$ there is a nonnegative continuous function $t \mapsto C_{t,q} = C_{t,q}(s,x_0,P)$ such that for every $t \geq s$
	\begin{equation}\label{M3-ineq}
	\mathbb{E}_P\bigg[\underset{u \in [0,t]}{\sup}||x(u)||^{2q}_{L^2}+ \int_s^t ||x(u)||^{2(q-1)}_{L^2}||x(u)||^2_{H^\gamma} du\bigg] \leq C_{t,q}\big(||x_0||^{2q}_{L^2}+1\big).
	\end{equation}
\end{enumerate}
\end{definition}
\begin{remark}\label{Rem_Def_MGsol}
	\begin{enumerate}
		\item[(i)] By \eqref{M3-ineq} it follows that the complement of  $L^{\infty}_{\mathrm{loc}}(\mathbb{R}_+, L^2_{\sigma})$ in $C(\mathbb{R}_+,H^{-3})$ is a $P$-negligible set. Since $H^{-3} \hookrightarrow L^2_{\sigma}$ continuously, by \cite[Lemma 2.1]{FR08}, it follows that $P$ is concentrated on weakly continuous paths in $L^2_{\sigma}$ and that $L^{\infty}_{\mathrm{loc}}(\mathbb{R}_+, L^2_{\sigma}) \in \mathcal{B}$. In particular, $x(t) \in L^2_{\sigma}$ for all $t \geq 0$ $P$-a.s. Since the continuous embeddings $H^{3} \hookrightarrow L^2_{\sigma} \hookrightarrow H^{-3}$ are dense, this also implies $\langle x(\cdot),e\rangle_{(-3,3)} = \langle x(\cdot), e \rangle_{L^2}$ $P$-a.s. in \eqref{DefMgM2} and, in view of \eqref{eq_bounddness_F-alpha}, that the second term of the right-hand side of \eqref{DefMgM2} is well-defined.
		\item[(ii)] Since $G$ is Hilbert-Schmidt, $GG^*$ is symmetric, non-negative and of trace class in $L^2_{\sigma}$. By the assumption $\Tr \left[ (-\D)^{-\rho_{0} \alpha} G G^{*} \right] < \infty$, there is an orthonormal basis $\{e_j\}_{j \geq 1}$ of $L^2_{\sigma}$ in $H^3$ consisting of eigenvectors of $GG^*$. Denote by $\{\lambda_j\}_{j \geq 1}$ the corresponding sequence of eigenvalues with $\lambda_j >0$. By (M2), in the context of the above definition, $\lambda_j^{-1}M^{e_j}$ has quadratic variation
		$$ \langle \langle \lambda_j^{-1}M^{e_j} \rangle \rangle_t= t-s,$$
		i.e. $M^{e_j}$ is a real-valued $(\mathcal{B}_t)_{t \geq s}$-Brownian motion on $\Omega_0$ under $P$. Consequently, $\sum_{j \geq 1}M^{e_j}e_j$ is an $L^2_{\sigma}$-valued $GG^*$-Wiener process starting from $s$ on $(\Omega_0,\mathcal{B}, (\mathcal{B}_t)_{t \geq s},P)$.
	\end{enumerate}
\end{remark}
Similarly, we define martingale solutions up to a stopping time $\tau$. To this end, we introduce the space of paths stopped at $\tau$, i.e. $\Omega_{0,\tau} := \{x(\cdot \wedge \tau): x \in \Omega_0 \} = \{x \in \Omega_0: x = x(\cdot \wedge \tau)\}$ and note that $\Omega_{0,\tau} \in \mathcal{B}(\Omega_0)$ and hence $\mathcal{P}(\Omega_{0,\tau}) \subseteq \mathcal{P}(\Omega_0)$.
\begin{definition}\label{DefMgSol_stop}
	Let $\gamma \in (0,1)$, $(s, x_0) \in \mathbb{R}_+\times L^2_{\sigma}$ and $\tau$ be a $(\mathcal{B}_t)_{t \geq s}$-stopping time. A probability measure $P \in \mathcal{P}(\Omega_{0,\tau})$ is a martingale solution to \eqref{HSNSE_Intro} on $[s,\tau]$ with initial condition $(s,x_0)$, if
	\begin{enumerate}
		\item [(M1)] $P\big(x \in \Omega_0: x(t) = x_0 \, \forall \, t \in [0,s]\big) = 1$.
		\item[(M2)] For each $e \in H^3$ the process on $\Omega_0$
		\begin{equation}\label{DefMgM2_stop}
		M^e_s(t\wedge \tau,x) := \langle x(t\wedge \tau)-x_0,e \rangle_{(-3,3)} - \int_s^{t \wedge \tau} \langle F_{\alpha}\big(x(r)\big),e \rangle_{(-3,3)}dr
		\end{equation}
		is a continuous real-valued, square-integrable $(\mathcal{B}_t)_{t \geq s}$-martingale with respect to $P$ with quadratic variation
		\begin{equation}
		\langle\langle M^e_s \rangle\rangle_t(x) = (t\wedge \tau-s)||G^*e||^2_U, \quad t \geq 0 \quad \text{ for } P-\text{a.e. }x\in \Omega_0.
		\end{equation}
		\item[(M3)] For each $q \in \mathbb{N}$ there is a nonnegative continuous function $t \mapsto C_{t,q} = C_{t,q}(s,x_0,P,\tau)$ such that for every $t \geq s$
		\begin{equation}\label{M3-ineq_stop}
		\mathbb{E}_P\bigg[\underset{u \in [0,t\wedge \tau]}{\sup}||x(u)||^{2q}_{L^2}+ \int_s^{t\wedge \tau} ||x(u)||^{2(q-1)}_{L^2}||x(u)||^2_{H^\gamma} du\bigg] \leq C_{t,q}\big(||x_0||^{2q}_{L^2}+1\big).
		\end{equation}
	\end{enumerate}
\end{definition}
In this context, the contents of Remark \ref{Rem_Def_MGsol} hold accordingly.
\\

We shall need the following existence- and stability-result for martingale solutions to \eqref{HSNSE_Intro}. Below by $\mathcal{C}\big(s,x_0,(C_{t,q})_q\big)$ we denote the set of all martingale solutions with initial condition $(s,x_0)$, which fulfill the inequality \eqref{M3-ineq} of Definition \ref{DefMgSol} with respect to a common family $(C_{t,q})_{q \geq 1}$.
\begin{theorem}\label{Thm3.1.-analog}
\begin{enumerate}[label = (\roman*)]
	\item\label{Thm.3.1-analog_itm1} There is a family of continuous nonnegative functions $t \mapsto C_{t,q}$ such that for each $(s,x_0) \in \mathbb{R}_+\times L^2_{\sigma}$, there exists a martingale solution $P = P_{s,x_0} \in \mathcal{C}\big(s,x_0,(C_{t,q})_q\big)$ to \eqref{HSNSE_Intro} as in Definition \ref{DefMgSol} with the choice $\gamma = \alpha$.
	\item\label{Thm.3.1-analog_itm2} If for each $n \in \mathbb{N}$, $P_n \in \mathcal{C}\big(s_n,x_n,(\tilde{C}_{t,q})_q\big)$ and $(s_n,x_n) \underset{n \to \infty}{\longrightarrow}(s,x_0)$ in $\mathbb{R}_+\times L^2_{\sigma}$, then there is a subsequence $(P_{n_k})_{k \geq 1}$ which converges weakly in $\mathcal{P}(\Omega_0)$ to some $P \in \mathcal{C}\big(s,x_0,(\tilde{C}_{t,q})_q\big)$.
\end{enumerate}
\end{theorem}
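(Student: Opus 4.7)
The plan is to follow the classical Faedo--Galerkin compactness scheme for part \ref{Thm.3.1-analog_itm1} and a direct tightness argument for part \ref{Thm.3.1-analog_itm2}, in analogy with the stochastic Navier--Stokes construction of Flandoli--Romito and \cite{HZZ_stoNSE}. The fractional dissipation $(-\D)^\alpha$ enters the energy estimate through $\langle (-\D)^\alpha v, v\rangle_{L^2} = \|(-\D)^{\alpha/2} v\|_{L^2}^2$ and provides $H^\alpha$-regularity in space. This is weaker than in the classical case but still suffices for the compactness needed to pass to the limit in the quadratic nonlinearity.

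For part \ref{Thm.3.1-analog_itm1}, fix $(s, x_0) \in \R_+ \times L^2_\sigma$, let $\Pi_n$ denote the orthogonal projection in $L^2_\sigma$ onto the span of the first $n$ Fourier modes (a subspace of $H^3$), and consider the Galerkin SDE
\begin{equation*}
    dv_n = -\Pi_n \divv(v_n \otimes v_n)\,dt - (-\D)^\alpha v_n\,dt + \Pi_n G\,dB, \qquad v_n(s) = \Pi_n x_0,
\end{equation*}
whose coefficients are locally Lipschitz on $\Pi_n L^2_\sigma$. Applying It\^o's formula to $\|v_n\|_{L^2}^{2q}$, using the incompressibility cancellation $\langle \divv(v_n \otimes v_n), v_n\rangle_{L^2} = 0$, the trace-class property of $GG^*$, and BDG together with Gr\"onwall, one obtains global existence of $v_n$ together with a uniform bound of the form \eqref{M3-ineq} with $\gamma = \alpha$, where the continuous functions $C_{t,q}$ depend on neither $n$, $s$, nor $x_0$. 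Denote by $P_n$ the law of $v_n$ (extended trivially on $[0, s]$) on $\Omega_0$. Tightness of $(P_n)$ follows from this $L^\infty_t L^2_\sigma$ bound, the control of the drift $F_\alpha(v_n)$ in $H^{-3}$ afforded by Lemma \ref{Lem_ext_F-alpha}, a BDG estimate for the stochastic integral, and the compactness of $L^2_\sigma \hookrightarrow H^{-3}$, via the Aldous or Kolmogorov criterion.

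The main obstacle is passing to the limit in the quadratic nonlinearity in (M2). Extracting a weakly convergent subsequence $P_{n_k} \to P$ and invoking Skorokhod representation, one obtains random variables $\tilde v_{n_k} \to \tilde v$ almost surely in $\Omega_0 = C(\R_+, H^{-3})$ on an auxiliary probability space, still satisfying the same uniform bounds. Interpolating $L^\infty_t L^2_\sigma \cap L^2_{t,\mathrm{loc}} H^\alpha$ against this a.s.\ convergence, the Aubin--Lions lemma yields strong convergence in $L^2_{\mathrm{loc}}(\R_+; L^2_\sigma)$, which is precisely what is needed to identify the limit of $\langle \divv(\tilde v_{n_k} \otimes \tilde v_{n_k}), e\rangle_{(-3,3)}$ with $\langle \divv(\tilde v \otimes \tilde v), e\rangle_{(-3,3)}$ for each $e \in H^3$. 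Identifying the limit of the Doob--Meyer decomposition then yields (M2) with the prescribed quadratic variation; (M1) is immediate, and \eqref{M3-ineq} transfers to $P$ by Fatou with the same $(C_{t,q})_q$, giving $P \in \mcC(s, x_0, (C_{t,q})_q)$.

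Part \ref{Thm.3.1-analog_itm2} is established by running the same tightness and Skorokhod/Aubin--Lions argument directly on the given family $(P_n)$, without any Galerkin construction: the common bound \eqref{M3-ineq} under $(\tilde{C}_{t,q})_q$ yields uniform $L^\infty_t L^2_\sigma \cap L^2_{t,\mathrm{loc}} H^\gamma$ control, hence tightness on $\Omega_0$, and the same compactness step handles the nonlinearity. Convergence $(s_n, x_n) \to (s, x_0)$ in $\R_+ \times L^2_\sigma$ transfers (M1) to the limit, while Fatou preserves \eqref{M3-ineq} with the same $(\tilde{C}_{t,q})_q$, so the limit $P$ lies in $\mcC(s, x_0, (\tilde{C}_{t,q})_q)$ as required.
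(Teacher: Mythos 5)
Your proposal is correct in substance, but it takes a genuinely different route from the paper. For part \ref{Thm.3.1-analog_itm1} the paper does not redo the Faedo--Galerkin compactness argument by hand: it verifies that the pair $(F_\alpha, G)$ fits the abstract framework of \cite{GRZ_MgEx09} -- checking demicontinuity of $F_\alpha$ on $L^2_\sigma$ (Lemma \ref{Lem_ext_F-alpha}), the coercivity $\langle F_\alpha(z),z\rangle_{(-3,3)} = -\|(-\D)^{\alpha/2}z\|_{L^2}^2$, the growth bound \eqref{eq_bounddness_F-alpha}, lower semicontinuity of the functionals $\mathcal{N}_q(y)=\|y\|_{L^2}^{2(q-1)}\|y\|_{H^\alpha}^2$, and the compact, dense embeddings $H^3\hookrightarrow L^2_\sigma\hookrightarrow H^{-3}$ -- and then invokes the general existence theorem there, which packages exactly the Galerkin/It\^o/tightness work you carry out explicitly; your route is more self-contained, the paper's is shorter and delegates the limit identification to a theorem whose hypotheses are cheap to check here. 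For part \ref{Thm.3.1-analog_itm2} the paper proves tightness of $(P_n)$ in the intersection space $C_{\mathrm{loc}}(\mathbb{R}_+,H^{-3})\cap L^2_{\mathrm{loc}}(\mathbb{R}_+,L^2_\sigma)$ via \cite[Lemma A.1]{HZZ_stoNSE} and then passes (M1)--(M3) to the limit directly at the level of the measures (continuity plus uniform integrability of the martingale functionals), with no Skorokhod representation; since (M2)--(M3) are statements about laws, this avoids the auxiliary probability space entirely. One point you should tighten in your version: after Skorokhod you only know $\E\big[\int_s^t\|\tilde v_{n_k}\|_{H^\alpha}^2\,du\big]$ is uniformly bounded, not that the $L^2_{\mathrm{loc}}H^\alpha$ norms are a.s.\ bounded along the whole subsequence, so the pathwise interpolation/Aubin--Lions step needs an extra Fatou argument (a.s.\ finiteness of the liminf and an $\omega$-dependent further subsequence), or, cleaner, you should build the $L^2_{\mathrm{loc}}(\mathbb{R}_+,L^2_\sigma)$ topology into the tightness statement from the start, as the paper does; with that adjustment both parts of your argument go through.
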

Note that in the second part of the theorem the constants $\tilde{C}_{t,q}$ are assumed to be independent of $n$.\\
 The proof of the first part is based on the general existence result for martingale solutions via Galerkin approximations of \cite{GRZ_MgEx09}, while the second part is a close adaptation of the analogous result for the case $\alpha =1$, i.e. the classical stochastic Navier--Stokes equations, see \cite[Thm. 3.1]{HZZ_stoNSE}. For the sake of completeness, we present a proof in Appendix B.
\begin{remark}\label{Rem_simpleEnergyIneq_GalerkinSol}
	The construction of $P_{s,x_0}$ in the above theorem via a Galerkin approximation in \cite{GRZ_MgEx09} implies the energy inequality
\begin{equation}\label{simpleEnergyIneqLocal}
\mathbb{E}_{P_{s,x_0}}\big[||x(t)||^2_{L^2}\big] \leq ||x_0||^2_{L^2}+(t-s)\cdot\Tr(GG^*), \quad t \geq s.
\end{equation}
Indeed, the Galerkin approximations $(P_n)_{n \geq 1}$ to $P_{s,x_0}$ in the proof of \cite{GRZ_MgEx09} satisfy \eqref{simpleEnergyIneqLocal} uniformly in $n \geq 1$. Hence, the claim follows from the weak convergence $P_n \underset{n \to \infty}{\longrightarrow}P_{s,x_0}$ in $\mathcal{P}(\Omega_0)$, the fact that $x \mapsto ||x(t)||^2_{L^2}$ is lower semicontinuous on $\Omega_0$ and since $x_0 \in L^2_{\sigma}$ is deterministic.
\end{remark}

\subsection{Measure theoretic preliminaries}\label{Section_measure_prelims}
Concerning our main result Theorem \ref{Main_Result}, we aim to construct global martingale solutions on $[0,+\infty)$ with distinct laws. However, the convex integration methods of Section \ref{Section_CI} only allow to construct a solution $P$ up to a bounded stopping time $\tau_L \leq L$ for large $L>1$. To extend such $P$ to $[0,+\infty)$, we would like to make use of a classical measure theoretical extension technique, see \cite[Theorem 6.1.2.]{StroockVaradh2007}: At the random time $\tau_L$, we want to "glue together in a pathwise sense" $P$ with martingale solutions $R_{\tau_L(x),x(\tau_L(x))}$ starting at the pathwise end point $x(\tau_L(x)) \in L^2_{\sigma}$ at time $\tau_L(x)$. The existence of such a family $R = (R_{\tau_L(x),x(\tau_L(x))})_{x \in \Omega_0}$ is guaranteed by Theorem \ref{Thm3.1.-analog}. However, $\tau_L$ turns out to be a stopping time with respect to the right-continuous filtration $(\mathcal{B}_t)_{t \geq 0}$ only, which rules out an application of the classical results in \cite{StroockVaradh2007}. Instead, we use the following results of \cite{HZZ_stoNSE}, which are tailored to address this issue. We omit parts of the proofs, for which we refer to the excellent paper \cite{HZZ_stoNSE}.
\begin{proposition}\label{PropMeasurability1}
	Let $\tau$ be a bounded $(\mathcal{B}_t)_{t \geq 0}$-stopping time. Then for every $x \in \Omega_0$ there exists $Q_{x} \in \mathcal{P}(\Omega_0)$ such that
	\begin{equation}\label{Prop.Measurability_aux1}
	Q_{x}\big(x' \in \Omega_0: x(t)=x'(t) \text{ for }0 \leq t \leq \tau(x)\big) =1
	\end{equation}and
	\begin{equation}
	Q_x(A) = R_{\tau(x),x(\tau(x))}(A) \quad \text{ for all }A \in B^{\tau(x)},
	\end{equation}
	where $R_{\tau(x),x(\tau(x))} \in \mathcal{P}(\Omega_0)$ is a martingale solution to \eqref{HSNSE_Intro} with initial value $x(\tau(x)) \in L^2_{\sigma}$ at time $\tau(x)$. Furthermore, $x \mapsto Q_x(B)$ is $\mathcal{B}_{\tau}$-measurable for each $B \in \mathcal{B}$.
\end{proposition}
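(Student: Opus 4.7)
The plan is to define $Q_x$ by concatenating the deterministic initial segment of $x$ up to $\tau(x)$ with a martingale solution started from $\bigl(\tau(x), x(\tau(x))\bigr)$. To make this construction measurable in $x$, I would first combine Theorem \ref{Thm3.1.-analog} with a Kuratowski--Ryll-Nardzewski-type measurable selection (part \ref{Thm.3.1-analog_itm1} supplies nonemptiness and part \ref{Thm.3.1-analog_itm2} supplies closedness of the solution set under convergent initial data, hence upper semicontinuity of the set-valued map) to produce a Borel map
$$(s,x_0)\in \R_+\times L^2_\sigma \mapsto R_{s,x_0}\in \mcP(\Omega_0)$$
such that each $R_{s,x_0}$ is a martingale solution with initial condition $(s,x_0)$. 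For $x\in \Omega_0$ with $x(\tau(x))\in L^2_\sigma$, I would then introduce the concatenation map $\Phi_x:\Omega_0\to \Omega_0$ given by
$$\Phi_x(y)(t) := \begin{cases} x(t), & 0\le t\le \tau(x),\\ y(t), & t>\tau(x),\end{cases}$$
which is continuous into $H^{-3}$ whenever $y(\tau(x))=x(\tau(x))$, and set $Q_x := (\Phi_x)_\ast R_{\tau(x),x(\tau(x))}$. For pathological $x$ with $x(\tau(x))\notin L^2_\sigma$ one may assign an arbitrary Borel probability; by Remark \ref{Rem_Def_MGsol}(i) this exceptional set will not matter for the later applications.

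Next, I would verify the two defining identities. By property (M1) applied to $R_{\tau(x),x(\tau(x))}$, the set $\{y\in \Omega_0 : y(t)=x(\tau(x))\ \text{for all}\ t\in[0,\tau(x)]\}$ has full $R_{\tau(x),x(\tau(x))}$-measure, and on this set $\Phi_x(y)$ coincides with $x$ throughout $[0,\tau(x)]$; this yields \eqref{Prop.Measurability_aux1}. For the second identity, any $A\in \mcB^{\tau(x)}$ depends only on the projections $\pi_r$ with $r\ge \tau(x)$, so $\Phi_x^{-1}(A)=A$, and therefore $Q_x(A)=R_{\tau(x),x(\tau(x))}(A)$ by the very definition of push-forward.

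The main technical obstacle is the $\mcB_\tau$-measurability of $x\mapsto Q_x(B)$ for arbitrary $B\in \mcB$. I would reduce via a monotone class argument to finite-dimensional cylinder sets $B=\{\pi_{t_1}\in E_1,\dots,\pi_{t_n}\in E_n\}$ with Borel $E_j\subseteq H^{-3}$. Splitting the indices according to whether $t_i\le \tau(x)$ or $t_i>\tau(x)$ and using the explicit form of $\Phi_x$, the quantity $Q_x(B)$ decomposes into a product of indicators of the form $\mathbbm{1}_{E_i}(x(t_i\wedge \tau(x)))$ (for $t_i\le \tau(x)$, using that $x$ itself is plugged in) times a probability of the form $R_{\tau(x),x(\tau(x))}\bigl(\pi_{t_j}\in E_j\ \text{for}\ t_j>\tau(x)\bigr)$. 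The first factor is $\mcB_\tau$-measurable because for each fixed $t\ge 0$ the map $x\mapsto x(t\wedge \tau(x))$ is $\mcB_\tau$-measurable; this is the delicate point where the right-continuity of $(\mcB_t)_{t\ge 0}$ is essential, and is proved by approximating $\tau$ from above by stopping times taking only finitely many values and invoking the $H^{-3}$-continuity of paths. The second factor is $\mcB_\tau$-measurable as the composition of the Borel map $(s,x_0)\mapsto R_{s,x_0}(\,\cdot\,)$ obtained from the selection step with the $\mcB_\tau$-measurable map $x\mapsto (\tau(x),x(\tau(x)))$. Assembling these two factors and passing back from cylinders to general $B\in \mcB$ via monotone class completes the argument.
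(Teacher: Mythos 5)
Your argument is correct and is essentially the proof the paper relies on (it refers to \cite{HZZ_stoNSE} instead of giving details): a Borel selection $(s,x_0)\mapsto R_{s,x_0}$ obtained from Theorem \ref{Thm3.1.-analog} via a measurable-selection theorem, the Stroock--Varadhan-type concatenation $Q_x=\delta_x\otimes_{\tau(x)}R_{\tau(x),x(\tau(x))}$, and a cylinder-set/monotone-class argument for the $\mathcal{B}_\tau$-measurability. One cosmetic remark: $\Phi_x^{-1}(A)=A$ is not an exact set identity for events involving the time $r=\tau(x)$ itself, but the symmetric difference is contained in $\{y:\,y(\tau(x))\neq x(\tau(x))\}$, which is $R_{\tau(x),x(\tau(x))}$-null by (M1), so your conclusion stands.
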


Below we aim to extend a martingale solution $P$ on $[0,\tau_L]$ to a martingale solution $P \otimes_{\tau_L} R \in \mathcal{P}(\Omega_0)$ on $[0,+\infty)$ such that 
\begin{equation}\label{Prop.Measurability_aux2}
P = P \otimes_{\tau_L}R
\end{equation}
up to $\tau_L$, where $P\otimes _{\tau_L} R$ is defined as in (\ref{def_gluedMeasure}).
We note that (\ref{Prop.Measurability_aux1}) yields $Q_x = \delta_{x}$ on $\mathcal{B}_{\tau(x)}^0$. If $\tau_L$ was a $(\mathcal{B}^0_t)_{t \geq 0}$-stopping time, this would imply 
\begin{equation}\label{PropMeasurability2_aux1}
Q_x\big(x' \in \Omega_0: \tau(x') = \tau(x)\big) = 1 
\end{equation}
for each $x \in \Omega_0$
and we could infer $P \otimes _{\tau_L} R = P$ on $\mathcal{B}^0_{\tau_L}$.
However, since $\tau_L$ is only a stopping time with respect to the right-continuous filtration $(\mathcal{B}_t)_{t \geq 0}$,  (\ref{PropMeasurability2_aux1}) does not follow from (\ref{Prop.Measurability_aux1}). Further, even with (\ref{PropMeasurability2_aux1}), we cannot infer $P = P \otimes_{\tau_L}R$ on $\mathcal{B}_{\tau_L}$, because it seems out of reach to show $Q_x = \delta_x$ on $\mathcal{B}_{\tau_L(x)}$. However, with (\ref{PropMeasurability2_aux1}) we do obtain (\ref{Prop.Measurability_aux2}) on $[0,\tau_L]$ in the sense that 
\begin{equation}\label{eq_P_and_ext_measure}
P(A) = P\otimes_{\tau_L}R(A), \quad A \in \sigma\big(\pi_{t \wedge \tau_L}, t \geq 0\big).
\end{equation} 
Indeed, choosing $A = \cap_{i \leq n}\{\pi_{t_i \wedge \tau_L} \in B_i\}$ with $0 \leq t_1 \leq \dots \leq t_{n}$, $B_i \in \mathcal{B}$ and $n \geq 1$, we have $$\bigcap_{i \leq n}\{\pi_{t_i \wedge \tau_L(x)} \in B_i\} \in \mathcal{B}^0_{\tau_L(x)}$$ and hence obtain, using \eqref{Prop.Measurability_aux1} and \eqref{PropMeasurability2_aux1},
$$Q_x(A) = Q_x\big(\cap_{i \leq n}\{\pi_{t_i \wedge \tau_L(x)} \in B_i\}\big) = \delta_x\big(\cap_{i \leq n}\{\pi_{t_i \wedge \tau_L(x)} \in B_i\}\big),$$
which yields (\ref{eq_P_and_ext_measure}). We also mention that the $\mathcal{B}_{\tau}$-measurability of $x \mapsto Q_x$ of Proposition \ref{PropMeasurability1} is only needed for the definition of $P \otimes _{\tau}R$ for which also mere $\mathcal{B}$-measurability would suffice. 
The above discussion leads to the following
\begin{proposition}\label{PropMeasurability2}
	For $\tau$ as in Proposition \ref{PropMeasurability1} and $x_0 \in L^2_{\sigma}$, let $P \in \mathcal{P}(\Omega_{0,\tau})$ be a martingale solution to \eqref{HSNSE_Intro} on $[0,\tau]$ in the sense of Definition \ref{DefMgSol_stop}. In addition to the situation in Proposition \ref{PropMeasurability1}, assume that there is $\mathcal{N} \in \Omega_{0,\tau}$ with $P(\mathcal{N}) = 0$ such that for every $x \in \mathcal{N}^c$ \eqref{PropMeasurability2_aux1} holds.
	Then the probability measure $P \otimes_{\tau}R \in \mathcal{P}(\Omega_0)$ defined by
	\begin{equation}\label{def_gluedMeasure}
	P\otimes_{\tau}R (\cdot) := \int_{\Omega_0}Q_x(\cdot)dP(x)
	\end{equation}
	is a martingale solution to \eqref{HSNSE_Intro} on $[0,+\infty)$ with initial condition $(0,x_0)$ and satisfies \eqref{eq_P_and_ext_measure}. 
\end{proposition}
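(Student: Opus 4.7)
My plan is to verify separately the compatibility identity \eqref{eq_P_and_ext_measure} and each of the three defining properties (M1)--(M3) of Definition \ref{DefMgSol} for $P\otimes_{\tau}R$. First I would establish \eqref{eq_P_and_ext_measure}, which in turn yields (M1): for a cylinder set of the form $A = \bigcap_{i \leq n} \{\pi_{t_i \wedge \tau} \in B_i\}$, the discussion preceding the proposition (combining \eqref{Prop.Measurability_aux1} with the hypothesis \eqref{PropMeasurability2_aux1} valid off $\mathcal{N}$) gives $Q_x(A) = \delta_x(A)$ for $P$-a.e.\ $x$, and a monotone class argument extends this to all of $\sigma(\pi_{t \wedge \tau}, t \geq 0)$. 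Since the initial-condition event $\{x: x(t) = x_0 \text{ for } t \in [0,0]\}$ lies in this $\sigma$-algebra and has full $P$-measure by (M1) for $P$, it has full $P \otimes_\tau R$-measure as well.

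Next I would verify (M2). For each $e \in H^3$, write
\begin{equation*}
M^e_0(t, x) = M^e_0(t \wedge \tau(x), x) + \bigl(M^e_0(t, x) - M^e_0(t \wedge \tau(x), x)\bigr).
\end{equation*}
The first summand is, up to $\tau$, the martingale produced by (M2) for $P$; by Step~1 this property transfers to $P \otimes_\tau R$. For the second summand, fix $x \notin \mathcal{N}$: by Proposition~\ref{PropMeasurability1} the measure $Q_x$ agrees with $R_{\tau(x), x(\tau(x))}$ on $\mathcal{B}^{\tau(x)}$, so under $Q_x$ the process $t \mapsto M^e_0(t \vee \tau(x)) - M^e_0(\tau(x))$ is a square-integrable martingale starting at $\tau(x)$ with quadratic variation $(t - \tau(x))^+\|G^*e\|^2_U$. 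Integrating against $P$ using the $\mcB_\tau$-measurability of $x \mapsto Q_x$ and a standard concatenation argument in the spirit of Stroock--Varadhan (Theorem 6.1.2 of \cite{StroockVaradh2007}) then yields that $M^e_0$ is an $(\mcB_t)$-martingale under $P \otimes_\tau R$ with the correct quadratic variation $t\|G^*e\|^2_U$.

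For (M3), I would split the supremum and the integral in \eqref{M3-ineq} at $\tau$. The contribution from $[0,\tau]$ is controlled by the stopped bound \eqref{M3-ineq_stop} for $P$. For the contribution from $[\tau, t]$, Theorem~\ref{Thm3.1.-analog}\ref{Thm.3.1-analog_itm1} furnishes the family $R_{\tau(x), x(\tau(x))}$ with a common continuous function $C_{t,q}$ depending only on the initial condition's $L^2$-norm; inserting the bound \eqref{M3-ineq_stop} for $P$ at the bounded stopping time $\tau$ to control the moments of $\|x(\tau(x))\|_{L^2}$ and integrating against $P$ gives (M3) for $P \otimes_\tau R$ (with a new continuous function $C_{t,q}$ absorbing all constants, using that $\tau \leq L$ is bounded).

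The main obstacle is the martingale property (M2), specifically handling the fact that $\tau$ is only a stopping time with respect to the right-continuous filtration $(\mcB_t)_{t \geq 0}$, not $(\mcB^0_t)_{t \geq 0}$. This prevents a naive identification $P = P \otimes_\tau R$ on all of $\mcB_\tau$; consequently the concatenation of the martingales at the random time $\tau$ cannot be carried out as in the classical setting of \cite{StroockVaradh2007}. The hypothesis on the exceptional set $\mathcal{N}$ is precisely what is needed to restore the identity \eqref{PropMeasurability2_aux1}, and thereby to ensure that the quadratic variations glue deterministically at $\tau$ under $Q_x$ for $P$-a.e.\ $x$; all other steps are then standard adaptations of the classical concatenation procedure.
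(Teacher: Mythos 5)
Your proposal is correct and follows essentially the same route as the paper: the identity \eqref{eq_P_and_ext_measure} is obtained from the cylinder-set computation in the discussion preceding the proposition, and (M1)--(M3) are checked by the Stroock--Varadhan-type concatenation argument adapted to the right-continuous filtration, which the paper simply delegates to \cite[Proposition 3.4]{HZZ_stoNSE}. Your sketch fills in what that citation covers (in particular, note that the stopped-martingale property under $P\otimes_\tau R$ does not follow from \eqref{eq_P_and_ext_measure} alone but from the pathwise evaluation under $Q_x$, which is part of the concatenation argument you invoke), so there is no substantive gap.
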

\begin{proof}
	Due to the measurability of $x \mapsto Q_x$ mentioned in Proposition \ref{PropMeasurability1}, the measure $P \otimes_{\tau}R$ is well-defined on $\mathcal{B}$ and (\ref{eq_P_and_ext_measure}) follows as outlined in the paragraph above this proposition. Concerning the martingale property, properties (M1)-(M3) follow as in  \cite[Proposition 3.4.]{HZZ_stoNSE}. 
\end{proof}
Hence, once we have constructed a martingale solution $P$ on $[0,\tau_L]$ in Section 3 and 4, in order to extend $P$ to $[0,+\infty)$, it only remains to prove (\ref{PropMeasurability2_aux1}), which is addressed in Proposition \ref{PropMeasExtension} below. 

\subsection{Decomposition of \eqref{HSNSE_Intro} and treatment of linear stochastic part}
As in \cite{HZZ_stoNSE}, a key step towards Theorem \ref{Main_Result} is the construction of an analytically weak solution $u$ to \eqref{HSNSE_Intro} up to a suitable stopping time $\mathfrak{t}$ in a strong probabilistic sense, which violates an energy inequality on a sufficiently large set of paths on a prescribed time interval.\\
Fix a filtered probability space
$(\Omega, \mathcal{F}, (\mathcal{F}_t)_{t \geq 0}, \mathbb{\mathbf{P}}, B)$, where $B$ is a $GG^*$-Brownian motion and $(\mathcal{F}_t)_{t \geq 0}$ is the normal Brownian filtration, i.e. the canonical Brownian filtration augmented by all $\mathbb{\mathbf{P}}$-negligible sets.
\begin{theorem}\label{Theorem_analySol}
	Assume that for some $\sigma >0$ we have $\Tr \left[ A_{\alpha}^{\rho_{0}} G G^{*} \right] = \Tr \left[ (-\D)^{\rho_{0} \alpha} G G^{*} \right] < \infty$ for $\rho_{0} = \frac{5 + 2\sigma - 2 \alpha}{2 \alpha}$. Let $T >0$ and $K>1$ be given. Then there is $\gamma \in (0,1)$, a $\mathbb{\mathbf{P}}$-a.s. strictly positive $(\mathcal{F}_t)_{t \geq 0}$-stopping time $\mathfrak{t} = \mathfrak{t}(T,K)$ satisfying
	\begin{equation}\label{Theorem_analySol_largeStoppTime}
	\mathbf{P}(\mathfrak{t} \geq T) > \frac{1}{2} 
	\end{equation}
	and an $(\mathcal{F}_t)_{t \geq 0}$-adapted analytically weak solution $u$ to \eqref{HSNSE_Intro} with $u(\cdot \wedge \mathfrak{t}) \in C(\mathbb{R}_+,H^{\gamma})$ $\mathbf{P}$-a.s. such that $u(0) \in L^2_{\sigma}$ is deterministic. Furthermore, $u$ can be constructed such that
	\begin{equation}\label{u esssup finite}
	\underset{\omega \in \Omega}{\esssup}\,\underset{t \geq 0}{\sup}\,||u(t \wedge \mathfrak{t})||_{H^\gamma} < +\infty 
	\end{equation}
	and
	\begin{equation}\label{Failure_energy_ineq}
	||u(T)||_{L^2} > K\big(||u(0)||_{L^2}+T \cdot \Tr(GG^*)^{1/2}\big)
	\end{equation}holds pathwise on the set $\{\mathfrak{t} \geq T \}$.
\end{theorem}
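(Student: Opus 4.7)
The plan is to adapt the stochastic convex-integration scheme of Hofmanov\'a--Zhu--Zhu to the hypodissipative symbol $(-\D)^\alpha$ and to Beltrami waves. I would split the problem into a linear stochastic part handled by semigroup estimates and a random nonlinear equation handled by a Beltrami-wave iteration, with a stopping time mediating between the two.

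First, introduce the stochastic convolution $z$ defined by $\partial_t z + (-\D)^\alpha z\,dt = dB$, $z(0)=0$, and set $v := u - z$, so that pathwise
\begin{equation*}
\partial_t v + \divv\bigl((v+z)\otimes(v+z)\bigr) + \nabla p + (-\D)^\alpha v = 0,\qquad \divv v = 0.
\end{equation*}
The trace condition $\Tr[(-\D)^{-\rho_0\alpha}GG^*] < \infty$ is tuned (via Proposition \ref{Prop_Reg_z_maintext}) so that $z$ almost surely lies in $C_{\mathrm{loc}}(\R_+; H^{\gamma'}) \cap C^{1/2-\delta}_{\mathrm{loc}}(\R_+; H^{\gamma''})$ for suitable $\gamma',\gamma''>0$. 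For $L>1$ I would set
\begin{equation*}
\tau_L := \inf\bigl\{t \geq 0 : \|z(t)\|_{H^{\gamma'}} + [z]_{C^{1/2-\delta}_t H^{\gamma''}} \geq L\bigr\}\wedge L,
\end{equation*}
which is an $(\mcF_t)$-stopping time by right-continuity of the filtration and almost-sure continuity of $z$. A Chebyshev estimate then yields $\mathbf{P}(\tau_L \geq T) > \tfrac12$ for $L = L(T,K)$ sufficiently large.

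Second, I would run a convex-integration iteration on $[0,\tau_L]$, building adapted pairs $(v_q,\mathring R_q)$ that solve the hypodissipative Navier--Stokes--Reynolds system
\begin{equation*}
\partial_t v_q + \divv\bigl((v_q+z_\ell)\otimes(v_q+z_\ell)\bigr) + \nabla p_q + (-\D)^\alpha v_q = \divv \mathring R_q,\qquad \divv v_q = 0,
\end{equation*}
where $z_\ell$ is a space--time mollification of $z$ at scale $\ell = \ell(q)$. The standard inductive estimates
\begin{equation*}
\|v_q\|_{C^0_t L^2} \leq M\Bigl(1 - \sum_{k\leq q}\delta_k^{1/2}\Bigr),\qquad \|v_q\|_{C^1_{t,x}} \lesssim \lambda_q^{5},\qquad \|\mathring R_q\|_{C^0_t L^1} \leq c_R\,\delta_{q+1},
\end{equation*}
are carried along with frequencies $\lambda_q = a^{b^q}$ and amplitudes $\delta_q = \lambda_q^{-2\beta}$ for appropriate $a,b,\beta$. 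The increment $w_{q+1} := v_{q+1}-v_q$ is built from Beltrami waves of frequency $\lambda_{q+1}$ whose phases are transported along the combined flow of $v_\ell + z_\ell$ (not $v_\ell$ alone), in order to cancel the low-frequency cross terms involving $z_\ell$ in the new stress. The oscillation, transport and Nash errors are then estimated via the inverse-divergence calculus, using that $\alpha < 1/2$ keeps the dissipative contribution subcritical relative to the Beltrami scaling.

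The main obstacle is the transport error and the attendant constant $C_L$ from \eqref{def_C-L}: since the transport field is $v_\ell + z_\ell$, the Lipschitz cost of lifting the Beltrami phases along its flow inherits the $L$-dependent bound on $\|z_\ell\|_{C^1}$, and this $C_L$ cannot be absorbed into the small absolute constant $c_R$ (cf. \eqref{cond_on_L}). The delicate step will be to verify that $C_L$ grows only polynomially, rather than super-exponentially, along the iteration $q \to q+1$, so that the geometric decay of $\delta_{q+1}$ still dominates and the induction closes. Once this is done, $(v_q)$ converges in $C([0,\tau_L]; H^\gamma)$ for some $\gamma \in (0,1)$; the limit $v$ is $(\mcF_t)$-adapted since each $v_q$ is a measurable functional of $z$ and the iteration is deterministic in the remaining arguments. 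Setting $\mathfrak t := \tau_L$ and $u := v + z$ (extended after $\mathfrak t$ so as to remain an adapted solution, e.g.\ by concatenation with a Galerkin solution from Theorem \ref{Thm3.1.-analog}) provides the sought pathwise solution, with \eqref{u esssup finite} following from the deterministic inductive $L^2$-bound and an interpolation controlling the $H^\gamma$-norm. Finally, the failure of the energy inequality \eqref{Failure_energy_ineq} is enforced by prescribing a deterministic $u(0) \in L^2_\sigma$ and a base step $v_0$ whose energy profile at time $T$ exceeds $2K(\|u(0)\|^2_{L^2} + T\cdot\Tr(GG^*))$; since $\sum_q \|w_{q+1}\|_{C^0_t L^2} \lesssim \sum_q \delta_{q+1}^{1/2}$ can be made arbitrarily small and $\|z(T)\|_{L^2}$ is controlled by $L$ on $\{\tau_L \geq T\}$, the inequality \eqref{Failure_energy_ineq} is preserved for $u(T) = v(T) + z(T)$.
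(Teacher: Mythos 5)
Your proposal follows essentially the same route as the paper: the splitting $u=v+z$ with $z$ the stochastic convolution, a bounded stopping time controlling Sobolev and H\"older-in-time norms of $z$, a Beltrami-wave convex-integration iteration for the Reynolds system on $[0,\tau_L]$ with phases transported along $v_\ell+z_\ell$ and the attendant $C_L$ bookkeeping, convergence of the adapted iterates to $v$ in a H\"older/Sobolev class, and an $L$-dependent, exponentially growing base profile $v_0$ so that choosing $L=L(T,K)$ large yields both $\mathbf{P}(\mathfrak t\geq T)>\tfrac12$ and the violation of the energy inequality. Two minor caveats: the paper's inductive bounds are uniform $C^0_{t,x}$ bounds (as befits Beltrami waves) rather than the $L^2/L^1$-type bounds you wrote down, which belong to intermittent schemes; and no pathwise extension of $u$ beyond $\mathfrak t$ is needed (nor would concatenating pathwise with a Galerkin solution preserve adaptedness, since those are only martingale solutions) — the solution is required only up to $\mathfrak t$, the extension being performed later at the level of laws.
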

In comparison with \cite[Thm 1.1.]{HZZ_stoNSE}, we remark that one may consider any $\kappa \in (0,1)$ instead of $\kappa = \frac{1}{2}$ in (\ref{Theorem_analySol_largeStoppTime}) by imposing an additional dependence for $\mathfrak{t}=\mathfrak{t}(T,K,\kappa)$. However, restricting to $\kappa = \frac{1}{2}$ is sufficient and slightly eases the subsequent presentation. 
\\

In order to construct $u$, we split \eqref{HSNSE_Intro} in two parts. One part is linear and contains the stochastic perturbation, while the second part is deterministic, but includes the nonlinear term. In this way, we separate the two challenging terms of \eqref{HSNSE_Intro} and deal with each of them by respective appropriate techniques, which fail to work directly on the full equation \eqref{HSNSE_Intro}. More precisely, following \cite{HZZ_stoNSE}, on $(\Omega, \mathcal{F}, (\mathcal{F}_t)_{t \geq 0}, \mathbf{P})$ we consider
\begin{equation}\tag{$H_{\text{lin,sto}}$}\label{H_lin_st}
\begin{cases}
dz +(-\D)^{\alpha}z + \nabla P_{1} dt &= dB_{t}, \\
\divv z &= 0, \\
z(0) &= 0.
\end{cases}
\end{equation}
and
\begin{equation}\tag{$H_{\text{nonlin,det}}$}\label{H_nonlin_det}
\begin{cases}
dv  +(-\D)^{\alpha}v + \text{div}\big((v+z)\otimes (v+z)\big) + \nabla P_{2} &=0, \\
\divv v &= 0,
\end{cases}
\end{equation}
where we first solve the stochastic equation (\ref{H_lin_st}) and fix its solution $z$ in (\ref{H_nonlin_det}). Concerning the former, by classical probabilistic arguments (cf. \cite[Theorem 5.4]{DPZ92}), we obtain a unique $(\mathcal{F}_t)_{t \geq 0}$-adapted solution $z: [0,+\infty) \to L^2_{\sigma}$ as a stochastic convolution. We stress that $z$ is a probabilistically strong solution. For later use, in the next proposition we establish the following crucial regularity of $z$. For the proof, consider Appendix \ref{AppA}.
\begin{proposition}\label{Prop_Reg_z_maintext}
Assume for some $\sigma >0$ we have $\Tr \left[ A_{\alpha}^{\rho_{0}} G G^{*} \right] = \Tr \left[ (-\D)^{\rho_{0} \alpha} G G^{*} \right] < \infty$ for $\rho_{0} = \frac{5 + 2\sigma - 2 \alpha}{2 \alpha}$. Then, for sufficiently small $\delta > 0$, the unique solution $z$ to \eqref{H_lin_st} satisfies
	 \begin{equation*}
	 \E_{\mathbf{P}} \left[ \| z \|_{C_{T}H^{\frac{5 + \sigma}{2}}} + \| z \|_{C^{\frac{1}{2} - 2\delta}_{T}H^{\frac{3 + \sigma}{2}}}  \right] < +\infty
	 \end{equation*}
for any $T >0$. 
\end{proposition}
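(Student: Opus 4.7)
The plan is to realise the unique adapted solution of \eqref{H_lin_st} as the stochastic convolution
\[
z(t) = \int_0^t S(t-r)\, dB_r, \qquad S(u) := e^{-u(-\D)^\alpha},
\]
and to extract both required regularities via the Da Prato-Kwapie\'{n}-Zabczyk factorisation method. For any $\gamma \in (0, \tfrac12)$, the beta-function identity $c_\gamma \int_r^t (t-s)^{\gamma-1}(s-r)^{-\gamma}\,ds = 1$ combined with a stochastic Fubini argument yields the representation
\[
z(t) = c_\gamma \int_0^t (t-s)^{\gamma - 1} S(t-s)\, Y(s)\, ds, \qquad Y(s) := \int_0^s (s-r)^{-\gamma} S(s-r)\, dB_r.
\]

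The crucial spectral bound is the following: for any $\beta \geq 0$, It\^o's isometry, the functional calculus for $-\D$ and cyclicity of the trace give
\[
\E_{\mathbf{P}}\|Y(s)\|_{H^\beta}^2 = \int_0^s u^{-2\gamma}\, \Tr\!\bigl[(1-\D)^\beta S(2u) GG^*\bigr]\, du \leq C \sum_{k \neq 0} \widetilde{\lambda}_k\, (1+|k|^2)^\beta\, |k|^{-2\alpha(1-2\gamma)},
\]
where $\widetilde{\lambda}_k \geq 0$ denote the spectral weights of $GG^*$ at wave-number $k$ in the Fourier basis, and the $u$-integral converges precisely because $\gamma < 1/2$. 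The right-hand side is dominated by the trace hypothesis whenever $\beta - \alpha(1 - 2\gamma) \leq \rho_0 \alpha = \tfrac{5 + 2\sigma - 2\alpha}{2}$. Gaussianity of $Y$ then upgrades this $L^2$-bound to any $L^{2q}$-moment.

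The conclusion rests on the classical factorisation lemma: for $\gamma > 1/(2q)$, the operator $R_\gamma f(t) := \int_0^t (t-s)^{\gamma - 1} S(t-s) f(s)\, ds$ maps $L^{2q}(0,T;X)$ boundedly into $C^{\gamma - 1/(2q)}(0,T;X)$ for every Banach space $X$ on which $S(u)$ is uniformly bounded. One applies it twice. First, with $\beta = \tfrac{3 + \sigma}{2} + \epsilon$ and $\gamma = \tfrac12 - \delta$, the spectral constraint reduces to $\epsilon + 2\alpha\gamma \leq 1 + \sigma/2$, which holds easily; choosing $q \geq 1/(2\delta)$ then yields $z \in C^{1/2 - 2\delta}(0,T; H^{(3+\sigma)/2})$. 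Second, with $\beta = \tfrac{5+\sigma}{2} + \epsilon$ and a smaller $\gamma$ satisfying $\epsilon + 2\alpha\gamma \leq \sigma/2$, which is possible thanks to $\sigma > 0$, the factorisation lemma produces continuity of $z$ with values in $H^{(5+\sigma)/2}$. Taking $\mathbf{P}$-expectations and using the deterministic boundedness of $R_\gamma$ completes the proof.

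The main technical obstacle is the exponent bookkeeping in the spectral estimate: the combined smoothing from the semigroup and the temporal weight $(s-r)^{-\gamma}$ gains exactly $\alpha(1-2\gamma)$ extra spatial derivatives, which must close the gap between $\beta$ and the trace exponent $\rho_0 \alpha$. When $\beta$ is pushed up to $\tfrac{5+\sigma}{2}$, this gap equals $\sigma/2$ modulo negligible terms, precisely the slack provided by the calibration $\rho_0 = \tfrac{5+2\sigma-2\alpha}{2\alpha}$; this is also what forces the strict positivity $\sigma > 0$ in the hypothesis. A secondary point of care is the need to choose $\gamma$ small for the $C_T H^{(5+\sigma)/2}$ part and $\gamma$ close to $1/2$ for the Hölder part, so that the same $Y$ feeds into two distinct applications of $R_\gamma$.
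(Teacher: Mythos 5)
Your proposal is correct and follows essentially the same route as the paper's Appendix~\ref{AppA} proof: the stochastic convolution is handled by the Da Prato--Zabczyk factorisation method, $Y$ is bounded in $L^{2q}(0,T;H^{\beta})$ via It\^o's isometry, the semigroup smoothing and Gaussian moment equivalence, and the bounded factorisation operator is applied twice, once with $\gamma$ small for the $C_TH^{\frac{5+\sigma}{2}}$ bound and once with $\gamma$ near $\tfrac12$ for the H\"older bound. The only difference is cosmetic bookkeeping: you run the estimate on the Fourier side, tacitly assuming $GG^{*}$ diagonalises in that basis, whereas the paper avoids this by splitting $A_{\alpha}^{\rho}S_{\alpha}(u)G$ into $A_{\alpha}^{\rho-j(\rho)}S_{\alpha}(u)$ times $A_{\alpha}^{j(\rho)}G$ and using only an operator-norm bound together with the trace hypothesis --- a fix you can adopt verbatim if $GG^{*}$ and $-\Delta$ do not commute.
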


\section{Proof of Theorem \ref{Theorem_analySol} by convex integration}\label{Section_CI}

In this section, we fix a probability space $(\Omega, \mathcal{F}, \mathbf{P})$ with a $GG^*$-Wiener process $B$ and the normal filtration $(\mathcal{F}_t)_{t \geq 0}$ generated by $B$. For fixed $\alpha \in (0,1/2)$, we consider the unique solution $z$ to (\ref{H_lin_st}) on this space and recall its regularity properties from Proposition \ref{Prop_Reg_z_maintext}. We intend to use convex integration techniques to solve the nonlinear equation (\ref{H_nonlin_det}) pathwise by an $(\mathcal{F}_t)_{t \geq 0}$-adapted analytically weak solution $v$ with increasing energy profile. To this end we need to control the size of $z = z(\omega)$ by which (\ref{H_nonlin_det}) deviates from the classical deterministic fractional Navier--Stokes equations considered in \cite{CDLDR18}. The following type of stopping time yields a suitable control. Let $\sigma>0$ as in Proposition \ref{Prop_Reg_z_maintext}. For $L>1$ and sufficiently small $\delta>0$, define $T_L$ on $\Omega$ as
\begin{equation}\label{def_stoppTime_T_L}
T_L(\omega) := \inf\{t \geq 0: ||z(t)||_{H^{\frac{5+\sigma}{2}}} \geq L^{\frac{1}{4}}C_S^{-1} \} \wedge \inf\{t \geq 0: ||z||_{C_t^{\frac{1}{2}-2\delta}H^{\frac{3+\sigma}{2}}} \geq L^{\frac{1}{2}}C_S^{-1} \} \wedge L.
\end{equation}
Here, $C_S$ comes from the Sobolev inequalities $||f||_{L^{\infty}} \leq C_S||f||_{H^\frac{3+\sigma}{2}}$, respectively. By standard Sobolev embeddings it follows that $z$ has a version in $C^0_{T_L,x}$.
By Proposition \ref{Prop_Reg_z_maintext} each $T_L$ is a bounded $(\mathcal{F}_t)_{t \geq 0}$-stopping time and we have $T_L >0$ and $T_L \nearrow +\infty$ as $L \to +\infty$ $\mathbf{P}$-a.s. Moreover, on $[0,T_L]$ we have the pathwise estimates
\begin{equation}\label{estim_z}
||z(t)||_{L^{\infty}} \leq L^{\frac{1}{4}},\quad ||\nabla z(t)||_{L^{\infty}} \leq L^{\frac{1}{4}}, \quad ||z||_{C_t^{\frac{1}{2}-2\delta}L^{\infty}} \leq L^{\frac{1}{2}}.
\end{equation}
Once we have constructed $v$ on $[0,T_L]$, we conclude that $u = z+v$ is an analytically weak solution to \eqref{HSNSE_Intro} on $[0,T_L]$ as stated in Theorem \ref{Thm3.1.-analog}, provided $L$ is chosen sufficiently large. Let us explain the iteration procedure leading to the construction of $v$. For each $q \in \mathbb{N}_0$, we construct a triple $(v_q,p_q,\mathring{R}_q)$, which analytically strongly solves the following \textit{fractional Navier--Stokes--Reynolds system} pathwise on $[0,T_L]$
\begin{equation}\label{Euler-Reynolds-system}
\begin{cases}
\partial_tv_q+\text{div}\big((v_q+z)\otimes (v_q + z)\big) + \nabla p_q +(-\Delta)^{\alpha}v_q &= \text{div}(\mathring{R}_q),\\
\text{div}(v_q) &= 0.
\end{cases}
\end{equation}
Here $v_q$ is a ($\omega$-wise) smooth vector field $v_q: [0,T_L]\times \mathbb{T}^3 \to \mathbb{R}^3$ with periodic boundary conditions and $\mathring{R}_q$ takes values in the space of symmetric, trace-free $3\times3$-matrices and continuously depends on $(t,x) \in [0,T_L]\times \mathbb{T}^3$ for fixed $\omega \in \Omega$. We will impose careful estimates on $v_q$ and $\mathring{R}_q$ and it will be crucial that both are $(\mathcal{F}_t)_{t \geq}$-adapted as function-valued processes. In contrast, the scalar pressure $p_q$ does not demand such considerations. Iteratively, we construct
$$v_{q+1} = v_{\ell} + w_{q+1},$$
where $v_{\ell}$ is a mollified version of $v_q$ as introduced below. The main characteristic of the \textit{perturbation} $w_{q+1}$ is the following: It consists of oscillatory waves with high frequency and a small amplitude. To capture the scale of both its frequency and amplitude we introduce sequences $(\lambda_q)_{q \geq 0}$ and $(\delta_q)_{q \geq 0}$, which diverge to $+\infty$ and converge to $0$, respectively. More precisely, for $a \in \mathbb{N}$, $c > \frac{5}{2}$ and $b >1$ we let
\begin{equation}\label{def_lambda_delta}
\lambda_q :=2a^{-b/2}a^{cb^{q+1}}, \quad \delta_q := \frac{a^b}{4(2\pi)^3}a^{-b^q}.
\end{equation}
In order to have $\lambda_q \in \mathbb{N}$, we assume $\frac{b}{2},c \in \mathbb{N}$. We remark that in order for $\lambda_q$ to be natural, one can drop the requirement $\frac{b}{2},c \in \mathbb{N}$ and instead assume $\lambda_q \in [2a^{cb^{q+1}-b/2},4a^{cb^{q+1}-b/2}] \cap \mathbb{N}$. However, since we do not aim to choose $b$ and $c$ close to their respective lower bounds $1$ and $\frac{5}{2}$, assuming $\frac{b}{2}$ and $c$ to be integers causes no harm in our situation. In view of Lemma \ref{Lemma_geom} we also assume $a$ to be a multiple of the geometric constant $n_0$ introduced in Appendix \ref{App_Beltrami_waves}. Note that $\delta^{1/2}_q\lambda_q$ is increasing in $q$. Furthermore, set $$M(t):= L^4e^{4Lt},\quad t \geq 0,$$
which can be thought of as a rapidly increasing energy profile for $v$, for large $L\in \mathbb{N}$ to be specified below.
The key estimates along the iteration are the following. For $t \in [0,T_L]$, and each $q \in \mathbb{N}_0$ we impose 
\begin{align}\
||v_{q+1}-v_{q}||_{C^0_{t,x}} &\leq M(t)^{\frac{1}{2}} \delta_{q+1}^{\frac{1}{2}}, \label{A.1}\tag{A.1}\\
||v_{q+1}-v_{q}||_{C_{t,x}^1} & \leq C_LM(t)^{\frac{1}{2}}\delta_{q+1}^{\frac{1}{2}}\lambda_{q+1}, \label{A.2}\tag{A.2}\\
||\mathring{R}_{q+1}||_{C^0_{t,x}} &\leq M(t)\delta_{q+2}c_R.	\label{A.3}\tag{A.3}
\end{align}
Here $c_R >0$ is a sufficiently small constant, whose size is only restricted by absolute constants via \eqref{c_R-restrictions} below. For the geometric number $C_{\mathbb{T}^3}:=||x||_{C^0} = \pi^{3/2}$, the constant 
\begin{equation}\label{def_C-L}
C_L := C_{\mathbb{T}^3}+\big(C_{\mathbb{T}^3}+1\big)\big(1+2M(L)^{1/2}+L^{1/4}\big)
\end{equation}
in (\ref{A.2}) is due to the first order term $||\Phi_j||_{C^1_{t,x}}$ of the transported vector field $\Phi_j$ introduced below. In contrast to the deterministic setting, where $||\Phi_j||_{C^1_{t,x}}$ is bounded by an absolute constant, which is then absorbed by $c_R$, (see \cite[Section 5]{BV19b}), here this is ruled out due to the dependence on the large parameter $L$, which cannot be absorbed by $c_R$ due to the requirement $c_RL \geq 20(2\pi)^3$ of Lemma \ref{Lem_Starting_triple}. In principle, one could roughly expect an exponential dependence of type $C_L^{2^q}$ not only in (\ref{A.2}), but also in (\ref{A.3}) and consequently also in (\ref{A.1}). However, the estimates for $\mathring{R}_{q+1}$ in Section \ref{testsection} allow to absorb all terms which explicitly contain $C_L$ and hence to obtain (\ref{A.3}) as stated above.

Before we state the main iteration result Proposition \ref{Prop_Main_Iteration}, we consider the initial stage $q = 0$.

\subsection{The starting triple}
For $L >1$ define
\begin{equation*}
v_0(t,x):= \frac{M(t)^{1/2}}{(2 \pi)^{3/2}}  \left( \cos( x_{3}), \sin( x_{3}), 0 \right),
\end{equation*}
\begin{equation*}
R_0(t,x):= \frac{(2L+1)  M(t)^{1/2}}{(2 \pi)^{3/2}}\begin{pmatrix} 0 & 0 & \sin( x_{3}) \\ 0 & 0 & -\cos( x_{3}) \\ \sin( x_{3}) & -\cos( x_{3}) & 0 \end{pmatrix}+ v_{0} \otimes z + z \otimes v_{0} + z \otimes z,
\end{equation*}
and $\mathring{R}_0$ as the traceless part of $R_0$. Furthermore, consider the scalar function $p_0$ such that
$$\text{div}(R_0) = \text{div}(\mathring{R}_0)-\nabla p_0,$$i.e.
\begin{equation*}
p_0(t,x) := -   \frac{1}{3}\big(2 v_0(t,x)\cdot z(t,x)+ |z(t,x)|^2\big).
\end{equation*} 
\begin{lemma}\label{Lem_Starting_triple}
 For $L>1$, the triple $(v_0,p_0,\mathring{R}_0)$ solves the Navier-Stokes-Reynolds equations \eqref{Euler-Reynolds-system} on $[0,T_L]\times \mathbb{T}^3$. Moreover, if we additionally assume
 \begin{equation}\label{cond_on_L}
20(2\pi)^3c_R^{-1} \leq L\leq \frac{a^2-2}{2},
 \end{equation}
 then the following estimates hold for $t \in [0,T_L]$.
 \begin{align}
 \label{Initial_stage_est_1}||v_0||_{C^0_{t,x}} &\leq M(t)^{1/2},\\
 \label{Initial_stage_est_2}||v_0||_{C^1_{t,x}} &\leq M(t)^{1/2}\delta_0^{1/2}\lambda_0,\\
 \label{Initial_stage_est_3}||\mathring{R}_0||_{C^0_{t,x}} &\leq M(t)c_R\delta_1.
 \end{align}
\end{lemma}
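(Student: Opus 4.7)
The plan is to verify \eqref{Euler-Reynolds-system} at $q=0$ by direct substitution into the explicit ansatz and then to establish each of the three estimates separately, using the stopping-time controls \eqref{estim_z} and the two sides of \eqref{cond_on_L}. Most of the work is mechanical; the only quantitative subtlety I expect will appear in the Reynolds estimate \eqref{Initial_stage_est_3}.

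For the equation I would exploit that $v_0$ is supported on the Fourier modes with $|k|=1$, so $(-\D)^{\alpha} v_0 = v_0$. Combined with $M'(t)=4L M(t)$ this gives $\partial_t v_0 + (-\D)^{\alpha} v_0 = (2L+1) v_0$. Both $\divv v_0 = 0$ and $\divv(v_0 \otimes v_0) = 0$ follow immediately from the structure of $v_0$, which depends only on $x_3$ and has vanishing third component. A row-by-row computation shows that the divergence of the Beltrami-type matrix in $R_0$ equals $(2L+1) v_0$, so $\divv R_0 = (2L+1) v_0 + \divv(v_0 \otimes z + z \otimes v_0 + z \otimes z)$. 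Since $\Tr R_0 = 2 v_0 \cdot z + |z|^2$, the defining relation $\divv R_0 = \divv \mathring{R}_0 - \nabla p_0$ then identifies $p_0$ (up to the sign convention in this defining relation) and closes the verification pathwise on $[0,T_L] \times \mathbb{T}^3$.

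For the sup-norm estimates, \eqref{Initial_stage_est_1} is immediate from $|v_0(t,x)| = M(t)^{1/2}/(2\pi)^{3/2}$. For \eqref{Initial_stage_est_2} I would differentiate to obtain $\|\partial_t v_0\|_{C^0_{t,x}} \leq 2L M(t)^{1/2}/(2\pi)^{3/2}$ and $\|D v_0\|_{C^0_{t,x}} \leq M(t)^{1/2}/(2\pi)^{3/2}$, hence $\|v_0\|_{C^1_{t,x}} \leq 2L M(t)^{1/2}/(2\pi)^{3/2}$. Evaluating $\delta_0^{1/2} \lambda_0 = a^{cb-1/2}/(2\pi)^{3/2}$ from \eqref{def_lambda_delta} reduces the claim to $2L \leq a^{cb-1/2}$; the hypotheses $c > 5/2$ and $b > 1$ force $cb-1/2 > 2$, so the upper bound $L \leq (a^2-2)/2$ yields $2L \leq a^2 - 2 < a^{cb-1/2}$ at once.

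The most delicate step, and the main obstacle I anticipate, is \eqref{Initial_stage_est_3}. Bounding the three pieces of $R_0$ separately---the Beltrami matrix in Frobenius norm by $\sqrt{2}(2L+1) M(t)^{1/2}/(2\pi)^{3/2}$, each of the cross terms by $M(t)^{1/2} L^{1/4}/(2\pi)^{3/2}$ using $\|z\|_{L^{\infty}} \leq L^{1/4}$ from \eqref{estim_z}, and $\|z \otimes z\|_{C^0_{t,x}} \leq L^{1/2}$---and using $|\mathring{R}_0| \leq |R_0|$ in Frobenius norm, I arrive at $\|\mathring{R}_0\|_{C^0_{t,x}} \leq C \bigl(L M(t)^{1/2} + L^{1/2}\bigr)$ on $[0,T_L]$ with an absolute constant $C$. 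With $\delta_1 = 1/(4(2\pi)^3)$, the target $M(t) c_R \delta_1 = c_R M(t)/(4(2\pi)^3)$ forces the dominant $L M(t)^{1/2}$ term to be controlled by a multiple of $c_R M(t)$, which reduces to $L \leq c_R M(t)^{1/2}/(4(2\pi)^3 C)$. This is precisely where the lower bound $L \geq 20(2\pi)^3 c_R^{-1}$ of \eqref{cond_on_L} pays off: since $M(t) \geq M(0) = L^4$ on $[0,T_L]$, one has $c_R M(t)^{1/2} \geq c_R L^2 \geq 20(2\pi)^3 L$, leaving the slack needed to absorb the Beltrami term (once the constant $C$ coming from Frobenius estimates and the trace-free projection is tracked) and the subdominant $L^{1/2}$ contribution. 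Thus both sides of \eqref{cond_on_L} are used in a genuinely quantitative way: the upper bound enters \eqref{Initial_stage_est_2}, the lower bound enters here.
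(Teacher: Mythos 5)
Your proposal is correct and follows essentially the same route as the paper: direct verification of \eqref{Euler-Reynolds-system} (the paper dismisses this as a straightforward calculation), the trivial bound for \eqref{Initial_stage_est_1}, reduction of \eqref{Initial_stage_est_2} to $2+2L\leq a^{cb-1/2}$ via the upper bound in \eqref{cond_on_L}, and a term-by-term bound on $R_0$ combined with $M(t)\geq L^4$ and the lower bound in \eqref{cond_on_L} for \eqref{Initial_stage_est_3}. The only cosmetic difference is that the paper consolidates the last estimate into $\|R_0\|_{C^0_{t,x}}\leq (2L+1)M(t)^{1/2}+M(t)^{1/2}L^{1/4}+L^{1/2}\leq 5M(t)/L$, which makes the constant tracking you defer immediate, since $5/L\leq c_R\delta_1$ is exactly the condition $L\geq 20(2\pi)^3c_R^{-1}$.
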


\begin{proof}
By a straightforward calculation it follows that $(v_0,p_0,\mathring{R}_0)$ solves (\ref{Euler-Reynolds-system}). (\ref{Initial_stage_est_1}) follows by definition of $v_0$. Concerning (\ref{Initial_stage_est_2}), we have
\begin{equation}
||v_0||_{C^1_{t,x}} \leq \frac{(2+2L)M(t)^{1/2}}{(2\pi)^{3/2}},
\end{equation}
which can be estimated by $$M(t)^{1/2}\delta_0^{1/2}\lambda_0 = \frac{M(t)^{1/2}}{(2\pi)^{3/2}}a^{cb-1/2},$$
provided $2+2L \leq a^{cb-1/2}$. Since $cb > 5/2$, this follows from (\ref{cond_on_L}).
For (\ref{Initial_stage_est_3}) we obtain
\begin{align*}
||R_0||_{C^0_{t,x}} \leq (2L+1)M(t)^{1/2}+M(t)^{1/2}L^{1/4}+L^{1/2} \leq 5\frac{M(t)}{L},
\end{align*}
which is bounded from above by $M(t)\delta_1c_R$ by (\ref{cond_on_L}), since $\delta_1^{-1} = 4(2\pi)^3$. 
\end{proof}

\subsection{Main iteration}
The following result captures the iteration procedure. 
\begin{proposition}[Main iteration]\label{Prop_Main_Iteration}
	Let $L \in \mathbb{N}$ satisfy \eqref{cond_on_L}. Then there is a choice of parameters $a > 0$, $b>1$ and $c > \frac{5}{2}$ such that there exists a sequence of triples $(v_q,p_q,\mathring{R}_q)_{q \geq 0}$ defined on $[0,T_L]\times \mathbb{T}^3$ with the following properties. $(v_0,p_0,\mathring{R}_0)$ is as in Lemma \ref{Lem_Starting_triple}, for each $q\geq 1$ the triple $(v_q,p_q,\mathring{R}_q)$ solves \eqref{Euler-Reynolds-system}, $v_q$ and $\mathring{R}_q$ are $(\mathcal{F}_t)_{t \geq 0}$-adapted and \eqref{A.1}-\eqref{A.3} hold. Furthermore, for each $q \geq 0$, $v_q(0)$ and $\mathring{R}_{q}(0)$ are deterministic
\end{proposition}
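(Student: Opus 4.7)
The argument is by induction on $q$. The base case $q=0$ is Lemma \ref{Lem_Starting_triple}; assuming $(v_q, p_q, \mathring{R}_q)$ has been constructed with all the stated properties, the plan is to build $(v_{q+1}, p_{q+1}, \mathring{R}_{q+1})$ by a convex integration step based on transported Beltrami waves. I first mollify in space and time at a scale $\ell$ (a small negative power of $\lambda_{q+1}$ tuned to $\delta_{q+1}$) to produce $v_\ell$, $\mathring{R}_\ell$ and $z_\ell$; mollifying in time by a kernel supported in $(-\ell,0)$ and shifting appropriately preserves $(\mathcal{F}_t)$-adaptedness and also preserves the initial values $v_q(0)$, $\mathring{R}_q(0)$, $z(0)=0$, all of which are deterministic. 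The mollified triple solves \eqref{Euler-Reynolds-system} up to a commutator-type stress, which is of order $\ell$ times powers of $\lambda_q$.

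Next I introduce the phase transport. Partition $[0,T_L]$ into intervals $I_j$ of length of order $\lambda_{q+1}^{-1}$ and, on each $I_j$, solve the transport equation
\begin{equation*}
\partial_t \Phi_j + (v_\ell + z_\ell)\cdot \nabla \Phi_j = 0, \qquad \Phi_j(t_j,x) = x,
\end{equation*}
so that the Lagrangian flow along $v_\ell + z_\ell$ straightens the phase of the oscillatory building blocks. The principal perturbation is
\begin{equation*}
w_{q+1}^{(p)}(t,x) = \sum_j \chi_j(t)\sum_{k \in \Lambda_j} a_{k,j}(\omega,t,x)\, B_k\, e^{i\lambda_{q+1} k\cdot \Phi_j(t,x)},
\end{equation*}
where $\{\chi_j\}$ is a smooth partition of unity subordinate to $\{I_j\}$, the index sets $\Lambda_j$ and Beltrami factors $B_k$ are supplied by Appendix \ref{App_Beltrami_waves}, and the amplitudes $a_{k,j}$ are chosen via the geometric lemma so that the low-frequency part of $w_{q+1}^{(p)} \otimes w_{q+1}^{(p)}$ cancels $\mathring{R}_\ell$ after normalising by $M(t)\delta_{q+1}$. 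A divergence-free corrector $w_{q+1}^{(c)}$ is then added, and I set $w_{q+1}:= w_{q+1}^{(p)} + w_{q+1}^{(c)}$, $v_{q+1} := v_\ell + w_{q+1}$. Since $\Phi_j$, $v_\ell$, $z_\ell$ and $\mathring{R}_\ell$ are all $(\mathcal{F}_t)$-adapted, so is $v_{q+1}$.

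The new Reynolds stress is produced by an inverse-divergence operator $\mathcal{R}$, splitting $\mathring{R}_{q+1}$ into a linear error from $(-\Delta)^{\alpha} w_{q+1}$, a transport error $\mathcal{R}\big((\partial_t + (v_\ell+z_\ell)\cdot\nabla) w_{q+1}^{(p)}\big)$ that gains $\lambda_{q+1}^{-1}$ via the transport equation for $\Phi_j$, a high-frequency oscillation error from $w_{q+1}^{(p)} \otimes w_{q+1}^{(p)} + \mathring{R}_\ell$, a flow error from the cross terms $v_\ell \otimes w_{q+1}$ and $z \otimes w_{q+1}$, and the commutator error from mollification. For \eqref{A.1} I control $||v_\ell-v_q||_{C^0_{t,x}}$ by a power of $\ell$ using \eqref{A.2} at level $q$, and $||w_{q+1}||_{C^0_{t,x}}$ by $M(t)^{1/2}\delta_{q+1}^{1/2}$ from the amplitude bound. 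For \eqref{A.2} the derivative cost on the fast phase produces $\lambda_{q+1}$, and the constant $C_L$ of \eqref{def_C-L} enters precisely through $||\Phi_j||_{C^1_{t,x}}$: on each short interval $I_j$, standard transport estimates together with $||v_\ell||_{C^0_{t,x}} \leq 2M(L)^{1/2}$ (from telescoping \eqref{A.1}) and $||z||_{C^0_{T_L,x}}, ||\nabla z||_{C^0_{T_L,x}} \leq L^{1/4}$ from \eqref{estim_z} give exactly the form of $C_L$. For \eqref{A.3}, choosing $b$ slightly larger than $1$, $c$ slightly larger than $\tfrac{5}{2}$, and $a$ sufficiently large depending on $L$ but not on $q$, every error term is dominated by $M(t)\delta_{q+2}c_R$.

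The main obstacle, as emphasised in the introduction, is preventing $C_L$ from compounding. Naively, $C_L$ enters \eqref{A.2} at step $q+1$, propagates into the new Reynolds stress and then into the next perturbation, threatening an exponential factor $C_L^{2^q}$ that would destroy convergence. The point is that every appearance of $C_L$ in the error estimates for $\mathring{R}_{q+1}$ is paired with a factor $\lambda_{q+1}^{-1}$ from $\mathcal{R}$ together with a stationary-phase gain; choosing $a$---and hence $\lambda_{q+1}$---large enough depending on $L$ alone absorbs $C_L$ into the small constant $c_R$ at each iteration, closing the induction with no cumulative loss and yielding \eqref{A.3} in the stated form.
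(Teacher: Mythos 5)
Your overall strategy is the same as the paper's (one-sided time mollification for adaptedness, time-localized transported Beltrami waves with amplitudes from the geometric lemma, a corrector, the inverse divergence $\mathcal{R}$ plus stationary phase for the new stress, and absorbing $C_L$ by choosing $a$ large in terms of $L$ but independently of $q$). However, there is one concrete step that would fail as written: you localize in time on intervals of length of order $\lambda_{q+1}^{-1}$. The time-cutoff scale cannot be taken that fine; it must be an \emph{intermediate} scale $\mu^{-1}$ with $C_LM(L)^{1/2}\delta_q^{1/2}\lambda_q + L^{1/4} \ll \mu \leq \delta_{q+1}^{1/2}\lambda_{q+1}$ (the paper takes $\mu = \delta_{q+1}^{1/4}\delta_q^{1/4}\lambda_q^{1/2}\lambda_{q+1}^{1/2}$, see \eqref{def_mu_q} and \eqref{length_scales_original}). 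The lower bound on $\mu$ is what you use (it keeps $D\Phi_j-\Id$ small on $\supp\chi_j$, cf. \eqref{DPhi-Id_C0_est}), but the upper bound is equally essential: the cutoff derivatives $\chi_j'$ are of size $\mu$ and enter $\partial_t a_{j,\xi}$, hence the transport error. With $\mu \sim \lambda_{q+1}$ this contribution is of order $M(t)\delta_{q+1}^{1/2}\lambda_{q+1}$ before applying $\mathcal{R}$, and the stationary-phase gain $\lambda_{q+1}^{-1+\varepsilon}$ only brings it down to $M(t)\delta_{q+1}^{1/2}\lambda_{q+1}^{\varepsilon}$, which is \emph{not} $\ll M(t)c_R\delta_{q+2}$; likewise the $\mu/\lambda_{q+1}$ term in the $C^1$ bound for $w_{q+1}$ is then no longer $O(1)$, so both \eqref{A.3} and \eqref{A.2} break. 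Balancing the cutoff-derivative cost against the flow-straightening requirement is precisely what forces the intermediate $\mu$, and it is one of the two tuning parameters (with $\ell$) of the Beltrami scheme; its absence is a genuine gap, not a cosmetic detail.

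Two smaller inaccuracies: first, $b$ slightly above $1$ and $c$ slightly above $\tfrac52$ do not suffice — the dissipation error \eqref{est_R-diss} forces $c > (\tfrac12-\alpha)^{-1}$ (arbitrarily large as $\alpha \uparrow \tfrac12$), and the scheme as set up needs $b > \tfrac83(\tfrac12-2\delta)^{-1} > 5$ via \eqref{length_scale_ell-with-delta}; the proposition only claims existence of admissible parameters, so this is repairable but your parenthetical claim is false. Second, your explanation of why $C_L$ does not compound is only partly right: several $C_L$-carrying terms ($R_{\mathrm{com}}$, $R_z$, and $v_q - v_\ell$) are not helped by $\mathcal{R}$ or stationary phase at all, but by mollification smallness $\ell^{1/2-2\delta}$ or $\ell$, cf. \eqref{est_R-comm}, \eqref{est_R-z}. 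The structural reason there is no $C_L^{2^q}$ cascade is that $C_L$ enters the wave amplitudes $a_{j,\xi}, L_{j,\xi}$ only through $D\Phi_j - \Id$, which after choosing $a = a(L)$ is bounded by $\lambda_{q+1}^{-\beta} \ll 1$ via \eqref{length_scales_with_L-dep}; hence \eqref{A.1} and \eqref{A.3} are $C_L$-free and $C_L$ survives only in \eqref{A.2} through $\|\Phi_j\|_{C^1_{\supp\chi_j,x}} \leq C_L$, where it is harmless for the interpolation giving $C^\gamma$-convergence.
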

We first show how the construction of a solution $v$ to (\ref{H_nonlin_det}) and the proof of Theorem \ref{Theorem_analySol} follow from here.
\subsubsection*{Construction of $v$}
By Proposition \ref{Prop_Main_Iteration}, there is a sequence $(v_q,p_q,\mathring{R}_q)_{q \geq 0}$ which satisfies \eqref{A.1}-\eqref{A.3} subject to a sufficiently large choice of $a>0, b>1, c>\frac{5}{2}$. Interpolating the H\"older space $C^{\gamma}$ for $\gamma \in (0,\frac{1}{2cb})$ inbetween $C^0$ and $C^1$, we obtain for $t \in [0,T_L]$
\begin{equation*}
\sum_{q \geq 0}||v_{q+1}-v_q||_{C^{\gamma}_{t,x}} \leq \sum_{q \geq 0}||v_{q+1}-v_q||_{C^1_{t,x}}^{\gamma}\cdot ||v_{q+1}-v_q||_{C^0_{t,x}}^{1-\gamma} \leq C_L^{\gamma}M(t)^{\frac{1}{2}}\sum_{q \geq 0}\delta_{q+1}^{\frac{1}{2}}\lambda_{q+1}^{\gamma} \lesssim M(t)^{\frac{1}{2}}\sum_{q \geq 0}a^{b^{q+1}(cb\gamma-\frac{1}{2})} .
\end{equation*}
Hence, for $\gamma \in (0,\frac{1}{2bc})$, $(v_q)_{q \geq 0}$ converges pathwise in $C^{\gamma}_{T_L,x}$ to a unique limit $v$. Since \eqref{A.3} implies the convergence $\mathring{R}_q \underset{q \to \infty}{\longrightarrow}0$ uniformly in $(t,x) \in [0,T_L]\times \mathbb{T}^3$, by considering (\ref{Euler-Reynolds-system}) for $q \longrightarrow \infty$ it follows that $v$ is an analytically weak solution to (\ref{H_nonlin_det}) on $[0,T_L]$. As $C_L$ and $M(t)$ are independent of $\omega$, the above calculation in particular yields 
\begin{equation}\label{v finite esssup Hölder norm}
\underset{\omega \in \Omega}{\esssup}\, ||v||_{C^0_{T_L}C_x^{\gamma}} \leq C_LL^2e^{2L^2} < +\infty.
\end{equation}
Furthermore, due to the convergence in $C([0,T_L],C^{\gamma})$, the $(\mathcal{F}_t)_{t \geq 0}$-adaptedness of each $v_q$ implies the $(\mathcal{F}_t)_{t \geq 0}$-adaptedness of $v$ and, moreover, since each $v_q(0)$ is deterministic, so is $v(0)$.

Next, fixing $T>0$, we want to show that for sufficiently large $L \geq L_0(T)$, on $\{T_L \geq T\}$ we have
\begin{equation}\label{energy_ineq_to_show}
||v(T)||_{L^2} > \big(||v(0)||_{L^2}+L\big)e^{LT}.
\end{equation}
To this end, first of all we note that for sufficiently large $a>1$, for any $t \leq L$ we have on $\{T_L \geq t\}$
\begin{align}\label{ineq_initial_stage_1}
||v(t)-v_0(t)||_{L^2} \leq \sum_{q\geq 0} ||(v_{q+1}-v_q)(t)||_{L^2} \leq (2\pi)^{3/2}M(t)^{1/2}\sum_{q\geq 0} \delta_{q+1}^{1/2} \leq \frac{a^{b/2}}{2}M(t)^{1/2}\sum_{q \geq 0}a^{-b^{q+1}/2} \leq \frac{3}{4}M(t)^{1/2},
\end{align}
where we used (\ref{A.1}) for the second inequality and $b\geq 2$, which implies $b^q \geq qb$ for $q\geq 1$, for the last inequality. Consequently, we obtain
\begin{align}\label{aux_initial_stage_2}
||v(T)||_{L^2} \geq ||v_0(T)||_{L^2}-||v(T)-v_0(T)||_{L^2} = M(T)^{1/2} - ||v(T)-v_0(T)||_{L^2} \geq \frac{M(T)^{1/2}}{4}.
\end{align}
On the other hand, (\ref{ineq_initial_stage_1}) also yields
\begin{align*}
\big(||v(0)||_{L^2}+L\big)e^{LT} &\leq \bigg(||v_0(0)||_{L^2}+||v(0)-v_0(0)||_{L^2}+L\bigg)e^{LT}\\&
\leq \bigg(\frac{7}{4}M(0)^{1/2}+L\bigg)e^{LT}\\&
=\bigg(\frac{7}{4}L^2+L\bigg)e^{LT}\\&
< \frac{M(T)^{1/2}}{4},
\end{align*}
where the last inequality holds under the condition $7+4L^{-1} < e^{LT}$, which follows, if we impose
\begin{equation}\label{cond_L_large_T}
L > \frac{\ln(11)}{T}.
\end{equation}
Combining with (\ref{aux_initial_stage_2}), (\ref{energy_ineq_to_show}) follows.
\subsubsection*{Proof of Theorem \ref{Theorem_analySol}}
Let $T >0$ and $K>1$ from the theorem be given and let $L>1$, which we will choose sufficiently large in terms of $T$ and $K$ later on. Set
$$u := z+v,$$ where $z$ is the unique solution to (\ref{H_lin_st}) on the prescribed filtered probability space $(\Omega, \mathcal{F}, (\mathcal{F}_t)_{t \geq 0}, \mathbf{P}, B)$ and $v$ is the solution to (\ref{H_nonlin_det}) on $[0,T_L]$ constructed in the previous passage. By Proposition \ref{Regularity-z} and (\ref{v finite esssup Hölder norm}) it is clear that there is $\gamma >0$ such that $u \in C([0,T_L],H^{\gamma})$ $\mathbf{P}$-a.s., $u$ is $(\mathcal{F}_t)_{t \geq 0}$-adapted and $u(0) = v(0) \in L^2_{\sigma} $ is deterministic. By construction of $z$ and $v$ as solutions to the equations (\ref{H_lin_st}) and (\ref{H_nonlin_det}), respectively, it follows that $u$ is an analytically weak solution to \eqref{HSNSE_Intro} on $[0,T_L]$. Choosing $\mathfrak{t} = T_L$, the definition of $T_L$ and (\ref{v finite esssup Hölder norm}) imply that \eqref{u esssup finite} holds. Concerning (\ref{Failure_energy_ineq}), we can choose $L \geq L_0(K,T, ||u(0)||_{L^2}, G)$ such that in particular (\ref{cond_L_large_T}) holds and we have on $\{T_L \geq T\}$
\begin{align*}
||u(T)||_{L^2} \geq ||v(T)||_{L^2}-||z(T)||_{L^2} &> (||v(0)||_{L^2}+L)e^{LT}-(2\pi)^{\frac{3}{2}}L^{\frac{1}{4}} \\& = (||u(0)||_{L^2}+L)e^{LT}-(2\pi)^{\frac{3}{2}}L^{\frac{1}{4}} \\&>
K\big(||u(0)||_{L^2}+T \cdot \Tr(GG^*)^{1/2}\big),
\end{align*}
where we used \eqref{energy_ineq_to_show} and the definition of $T_L$ for the first strict inequality. Finally, upon possibly increasing $L$ once more to obtain $\mathbf{P}(T_L \geq T) > \frac{1}{2}$, the proof is complete. \qed

\subsection{Proof of the main iteration}
It remains to prove Proposition \ref{Prop_Main_Iteration}. To this end, let $q \geq 0$. Starting with the initial triple of Lemma \ref{Lem_Starting_triple}, assume triples up to $(v_q,p_q,\mathring{R}_q)$ fulfilling the iterative bounds \eqref{A.1}-\eqref{A.3} for a common choice of $a, b$ and $c$ are constructed already such that each $v_q$ and $\mathring{R}_q$ is $(\mathcal{F}_t)_{t \geq 0}$-adapted and $v_q(0)$ and $\mathring{R}_q(0)$ are deterministic. In all that follows, for functions $f$ depending on $(t,x,\omega) \in \mathbb{R}_+ \times \mathbb{R}^d \times \Omega$, we suppress the explicit dependence on $\omega$ and simply write $f(t,x)$. In order to construct $(v_{q+1},p_{q+1},\mathring{R}_{q+1})$ with all desired properties, we proceed as follows.

From the definition of $\lambda_q$ and $\delta_q$ in \eqref{def_lambda_delta}, the iterative construction $v_q = v_0+\sum_{p \leq q} (v_{p}-v_{p-1})$, the estimates (\ref{A.1}), (\ref{A.2}) and (\ref{Initial_stage_est_1})-(\ref{Initial_stage_est_3}), once we choose $a \geq a_0(b,c)$ sufficiently large, we obtain 
\begin{equation}
\sum_{p \geq 1}\delta_p^{1/2} \leq 1, \quad\quad \sum_{0 \leq p \leq q}\delta_p^{1/2}\lambda_p \leq 2\delta_q^{1/2}\lambda_q
\end{equation}and hence the bounds
\begin{equation}\label{estimate_v_q}
||v_q||_{C^0_{t,x}} \leq 2M(t)^{\frac{1}{2}}, \quad ||v_q||_{C^1_{t,x}}\leq 2C_LM(t)^{\frac{1}{2}}\delta_q^{\frac{1}{2}}\lambda_q, \quad t \in [0,T_L].
\end{equation}

\subsubsection{Choice of parameters}\label{Sect_parameter_choices}
One has to choose carefully the scales of the parameters $\ell = \ell_q$ and $\mu = \mu_q$ in comparison to the high frequency and low amplitude terms $\lambda_q$ and $\delta_q$. For $q \in \mathbb{N}_0$, set
\begin{equation}\label{def_ell_q}
\ell= \ell_q := \delta_{q+1}^{-1/8}\delta_q^{1/8}\lambda_q^{-1/4}\lambda_{q+1}^{-3/4}
\end{equation}
and
\begin{equation}\label{def_mu_q}
\mu = \mu_q := \delta_{q+1}^{1/4}\delta_{q}^{1/4}\lambda_q^{1/2}\lambda_{q+1}^{1/2}.
\end{equation}
This choice of $\mu$ and $\ell$ guarantees that
\begin{equation}\label{mu_ll_ellinv}
    \mu \ll \ell^{-1},
\end{equation}
which can be seen as follows: We have by definition and since $q \geq 0$ that
\begin{align*}
    \frac{1}{\mu \ell} = 2 (2 \pi)^{3/2} a^{-b/2} a^{b^{q} \left( \frac{1}{8} b + \frac{3}{8} - \frac{1}{4} bc + \frac{1}{4} b^{2} c \right)} \geq 2 (2 \pi)^{3/2} a^{-b/2} a^{\frac{1}{8} b + \frac{3}{8} - \frac{1}{4} bc + \frac{1}{4} b^{2} c} \gg 1,
\end{align*}
which holds since $b >1, c > \frac{5}{2}$. Moreover, for a fixed sufficiently small $\delta >0$ as in \ref{def_stoppTime_T_L}, we fix $b \in \mathbb{N}$ such that $b > \frac{8}{3}(\frac{1}{2}-2\delta)^{-1} > 5$, set 
\begin{align*}
\beta := \frac{b-1}{5b + 5} < \frac{1}{5},
\end{align*}
and fix $\varepsilon < \min \{ \frac{1}{4} -\frac{\alpha}{2}, \frac{\beta}{2} \}$. We also fix $N_0 := \lceil\frac{1+\varepsilon}{\beta}\rceil+1$ and 
\begin{align*}
c > \max \left( \frac{2}{\beta}, \frac{8}{3}\left(\frac{1}{2}-2\delta\right)^{-1}, \left(\frac{1}{2} - \alpha \right)^{-1} \right)  .
\end{align*} 
At this point, all parameters and length scales are fixed, with the exception of $a$, which throughout the iteration we will successively increase in terms of all other parameters as needed. With regard to Proposition \ref{Prop_Main_Iteration}, note that neither $a$ nor any other parameter depends on the stage $q$. These parameter choices yield the crucial estimates 
\begin{equation}\label{length_scales_original}
\frac{\delta_q^{1/2}\lambda_q\ell}{\delta_{q+1}^{1/2}}\ll 1, \quad\quad \frac{\delta_q^{1/2}\lambda_q}{\mu}+\frac{1}{\ell \lambda_{q+1}} \leq \lambda_{q+1}^{-\beta}\ll \frac{\delta_{q+2}}{\delta_{q+1}}, \quad\quad \lambda_{q+1}^{-1} \leq \frac{\delta_{q+1}^{1/2}}{\mu}.
\end{equation}
From here, we may further increase $a$ in terms of $L$ to obtain
\begin{equation}\label{length_scales_with_L-dep}
C_LM(L)^{1/2}\frac{\delta_q^{1/2}\lambda_q\ell}{\delta_{q+1}^{1/2}} \ll 1,\quad \frac{C_L}{\ell\lambda_{q+1}}+\frac{C_LM(L)^{\frac{1}{2}}\delta_{q}^{\frac{1}{2}}\lambda_q+L^{\frac{1}{4}}}{\mu} \leq \lambda_{q+1}^{-\beta} \ll \frac{\delta_{q+2} }{\delta_{q+1}}.
\end{equation}
Furthermore, upon choosing $a$ sufficiently large, the lower bounds for $b$ and $c$ stated above yield
\begin{equation}\label{length_scale_ell-with-delta}
C_L\ell^{1/2-2\delta}\delta_q^{1/2}\lambda_q \ll \delta_{q+2}.
\end{equation}
Moreover, the necessary relations for the small parameter $c_R$ are summarized by the condition
\begin{equation}\label{c_R-restrictions}
c_R < \text{min}\big(r_0^{2},(4Dc_0)^{-4}\big),
\end{equation}
where we denote by $c_0>0$ the maximum of all implicit constants appearing in Lemma \ref{Lemma_est_Phi-j} and \ref{Lemma_all_estimates}. We remark that all these constants are strictly positive and absolute, i.e. in particular they do not depend on any of the parameters involved in the iteration scheme mentioned above.

\subsubsection{Mollification}\label{Sect_Mollification}  In order to avoid a loss of derivative for $v_q$ and to improve the regularity of $\mathring{R}_q$ and $z$, we mollify in space and time. The time mollification needs to be non-anticipating in order to maintain $(\mathcal{F}_t)_{t \geq 0}$-adaptedness. Let $\{\phi_{\epsilon}\}_{\epsilon>0}$ be a family of standard mollifiers on $\mathbb{R}^3$ and $\{\varphi_{\epsilon}\}_{\epsilon >0}$ a family of standard mollifiers with support on $\mathbb{R}_+$. For technical reasons we replace $v_q$ and $z$ by $v_q(\cdot \wedge T_L)$ and $z(\cdot \wedge T_L)$, i.e. we consider their constant extensions beyond $T_L$ on $[0,L]$. However, we still denote these extended maps by $v_q$ and $z$ and remark that everything stated above in this section remains true for these extensions. For the mollification length scale $\ell = \ell_q$ defined in (\ref{def_ell_q}), set 
$$v_{\ell} := (v_q *_x \phi_{\ell})*_t \varphi_{\ell} , \quad\mathring{R}_{\ell} := (\mathring{R}_q *_x \phi_{\ell})*_t \varphi_{\ell},\quad z_{\ell} := (z *_x \phi_{\ell})*_t \varphi_{\ell}.$$
Note that $v_{\ell},\mathring{R}_{\ell}$ and $z_{\ell}$ are $(\mathcal{F}_t)_{t \geq 0}$-adapted, $z_{\ell}(0) = 0$, $v_{\ell}(0)$, $\partial_tv_{\ell}(0)$, $\mathring{R}_{\ell}(0)$ and $\partial_t\mathring{R}_{\ell}(0)$ are deterministic and $v_{\ell}$, $z_{\ell}$ and $\mathring{R}_{\ell}$ are divergence-free. It is straightforward to check that on $[0,T_L]$ the pair $(v_{\ell},\mathring{R}_{\ell})$ solves
\begin{equation}\label{Euler-Reynolds-system_MOLLIFIED}
\begin{cases}
\partial_tv_{\ell}+\text{div}\big((v_{\ell}+z_{\ell})\otimes (v_{\ell} + z_{\ell})\big) + \nabla p_{\ell} +(-\Delta)^{\alpha}v_{\ell} &= \text{div}(\mathring{R}_{\ell}+\mathring{R}_{\text{com}})\\
\text{div}(v_{\ell}) &= 0,
\end{cases}
\end{equation}
with
$$\mathring{R}_{\text{com}} := (v_{\ell}+z_{\ell})\mathring{\otimes} (v_{\ell}+z_{\ell}) - \big((v_q+z_q) \mathring{\otimes} (v_q+z_q)\big) *_x \phi_{\ell} *_t \varphi_{\ell}$$
and
$$p_{\ell}:= (p_q *_x \phi_{\ell})* _t \varphi_{\ell}- \frac{1}{3}\big(|v_{\ell}+z_{\ell}|^2-(|v_q+z_q|^2*_x \phi_{\ell})*_t \varphi_{\ell}\big).$$
From what we mentioned above, it follows that $\mathring{R}_{\text{com}}$ is $(\mathcal{F}_t)_{t \geq 0}$-adapted and $\mathring{R}_{\text{com}}(0)$ is deterministic.

By standard mollification estimates, the inductive estimates (\ref{A.3}) and (\ref{estimate_v_q}) and the definition of $T_L$ we obtain the following estimates for $N \geq 0$ and $t \in [0,T_L]$: 
\begin{align*}\label{Mollif_estim}
||v_q-v_{\ell}||_{C^0_{t,x}} &\leq \ell ||v_q||_{C_{t,x}^1} \leq 2C_LM(t)^{1/2}\delta_q^{1/2}\lambda_q\ell,\\
||v_{\ell}||_{C^{N+1}_{t,x}} &\lesssim \ell^{-N}||v_q||_{C^1_{t,x}}\lesssim C_LM(t)^{1/2}\delta_q^{1/2}\lambda_q\ell^{-N},\\
||v_{\ell}||_{C^0_{t,x}} &\leq ||v_q||_{C_{t,x}^0} \leq 2M(t)^{1/2},\\ \tag{M}
||\mathring{R}_{\ell}||_{C^N_{t,x}} &\lesssim \ell^{-N}||\mathring{R}_q||_{C^0_{t,x}} \lesssim \ell^{-N}M(t)\delta_{q+1}c_R,\\ 
 ||z_{\ell}||_{C^0_{t,x}} &\leq ||z||_{C^0_{t,x}} \leq L^{1/4},\\
 ||z_{\ell}||_{C^0_tC^{N+1}_x} &\lesssim \ell^{-N}L^{1/4},\\
 ||z-z_{\ell}||_{C^0_{t,x}} &\lesssim \ell^{1/2-\delta}L^{1/2}.
\end{align*}
All implicit constants stem from the mollifiers $\phi_{\ell}$ and $\varphi_{\ell}$ and hence only depend on $N$. 
Moreover, in order to estimate the solution $\Phi_j$ to the transport equation (\ref{Transport_eq}), we also need estimates on $v_{\ell}$ and $z_{\ell}$ beyond $T_L$. More precisely, for $t \in [0,L]$ and $N \geq 0$, we have
\begin{align*}
||v_{\ell}||_{C^0_{t,x}} &\leq ||v_q||_{C^0_{T_L \wedge t,x}}\leq 2M(T_L\wedge t)^{1/2},\quad\quad ||z_{\ell}||_{C^0_{t,x}} \leq ||z||_{C^0_{T_L \wedge t,x}} \leq L^{1/4},\\
\tag{M-ext}\label{Mollif_ext}||v_{\ell}||_{C^{N+1}_{t,x}} &\lesssim \ell^{-N}||v_q||_{C^1_{T_L\wedge t,x}} \lesssim C_LM(T_L\wedge t)^{1/2}\delta_{q}^{1/2}\lambda_q\ell^{-N},\\
||z_{\ell}||_{C^0_tC_x^{N+1}}&\lesssim \ell^{-N}L^{1/4}.
\end{align*}
\subsubsection{Time localization and phase transport}
In order to reduce the transport error $[\partial_t+(v_{\ell}+z_{\ell})\cdot \nabla]w_{q+1}$ of the perturbation, which is introduced in \ref{Def Rq+1}, for the principal part $w^{(p)}_{q+1}$ we consider the nonlinear phase $\Phi(t,x)$, where $\Phi$ is a vector field transported by $v_{\ell}+z_{\ell}$. In order to control its deviation from its initial value, we introduce a localization in time as follows. Let $\chi \in C^{\infty}_c\big((-\frac{3}{4},\frac{3}{4})\big)$ be a nonnegative cutoff function such that
$$
\sum_{l \in \mathbb{N}_0}\chi^2(t-l) = 1,\quad t \in \mathbb{R}.$$
Let $L \in \mathbb{N}$ be as in Proposition \ref{Prop_Main_Iteration}, $\mu = \mu_q \in \mathbb{N}$ as in (\ref{def_mu_q}) and set, for $j \in \{0,\dots,L\mu\}$, $\chi_j(t):= \chi(\mu t-j)$, which yields
\begin{equation}\label{chi_squared_part_unity}
\sum_{j}\chi^2_j(t) = 1, \quad t \in [0,L].
\end{equation}
Here and throughout, the summation in $j$ ranges over $\{0,\dots,L\mu\}.$ Since $\supp \chi_j  \subseteq \frac{1}{\mu}(-\frac{3}{4}+j,\frac{3}{4}+j)$, at each time $t$ at most two cutoffs are nontrivial. We recall that by $v_{\ell}$ and $z_{\ell}$ we always mean the mollification of $v_{q}(\cdot \wedge T_L)$ and $z(\cdot \wedge T_L)$, respectively. Consider $v_{\ell}+z_{\ell}$ as a smooth $2\pi$-periodic vector field on $[0,L]\times \mathbb{R}^3$. For $j \in \{0,\dots,L\mu\}$ we define $\Phi_j$: $[0,L]\times \mathbb{R}^3\times \Omega\to \mathbb{R}^3$ as the unique solution to the transport equation
\begin{equation}\label{Transport_eq}
\begin{cases}
[\partial_t + (v_{\ell}+z_{\ell})\cdot \nabla]\Phi_j &= 0,\\
\Phi_j(j\mu^{-1},x) &= x.
\end{cases}
\end{equation}
Note that $\Phi_j$ is the inverse flow of the ordinary differential equation with vector field $v_{\ell}+z_{\ell}$ with start at time $t = j\mu^{-1}$ as the identity. Thus, for each $t \in [0,L]$ and $x \in \mathbb{R}^3$, we have $\Phi_j(t,x)-\Phi_j(t,x+y) \in (2\pi\mathbb{Z})^3$ for any $y \in (2\pi\mathbb{Z})^3$. Consequently, $x \mapsto e^{i \lambda_{q+1}\xi\cdot \Phi_j(t,x)}$ is $(2\pi\mathbb{Z})^3$-periodic for each $t$ and may hence be considered an element in  $C^{\infty}(\mathbb{T}^3,\mathbb{C})$. Clearly, $\Phi_0(0)$ and $\partial_t\Phi_0(0)$ are deterministic and $(t,\omega) \mapsto \Phi_j(t,\omega) \in L^2$ is $(\mathcal{F}_t)_{t \geq 0}$-adapted.
To verify the inductive estimates (\ref{A.1})-(\ref{A.3}), we need the estimates on $\Phi_j$ contained in the following lemma, for which we recall the constant $C_L$ introduced in (\ref{def_C-L}).
\begin{lemma}\label{Lemma_est_Phi-j}
	For $j \in \{0,\dots,L\mu\}$, the unique solution $\Phi_j$ to (\ref{Transport_eq}) satisfies the following estimates. 
\begin{align}
\label{DPhi_C0_est}\| D \Phi_{j} \|_{C_{\supp \chi_j,x}^{0}} &\leq 1+C_{\mathbb{T}^3}, \\
\label{DPhi-Id_C0_est} ||D\Phi_j-\Id||_{C^0_{\supp \chi_j,x}}&\lesssim \frac{C_LM(L)^{1/2} \delta_{q}^{1/2} \lambda_{q} + L^{1/4}}{\mu} \ll 1, \\
\label{DPhi_CN_est} \| D \Phi_{j} \|_{C^0_{\supp \chi_j}C_x^{N}} &\lesssim \frac{C_LM(L)^{1/2}\delta_{q}^{1/2} \lambda_{q} + L^{1/4}}{\mu} \ell^{-{N}} \ll \ell^{-N},\quad N \geq 1,\\
\label{Phi_C^1tx_est}||\Phi_j||_{C^1_{\supp \chi_j,x}} &\leq C_L.
\end{align}
\end{lemma}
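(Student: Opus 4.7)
The strategy is to reduce every estimate to a Gronwall-type argument along the characteristics of $v_{\ell}+z_{\ell}$, exploiting that $|t-j\mu^{-1}|\leq \tfrac{3}{4\mu}$ on $\supp \chi_j$. Set $K:=C_L M(L)^{1/2}\delta_q^{1/2}\lambda_q + L^{1/4}$; by the mollification bounds \eqref{Mollif_estim}--\eqref{Mollif_ext} together with \eqref{estim_z} one has $\|D(v_\ell+z_\ell)\|_{C^0_{[0,L],x}} \lesssim K$, while \eqref{length_scales_with_L-dep} gives $K/\mu\ll 1$. Differentiating \eqref{Transport_eq} in $x$, the matrix $A(t):=D\Phi_j(t,X(t,x_0))$ along a characteristic $t\mapsto X(t,x_0)$ of $v_\ell+z_\ell$ issued at time $j/\mu$ satisfies the linear ODE $A'(t)=-A(t)\,D(v_\ell+z_\ell)(t,X(t,x_0))$ with $A(j/\mu)=\mathrm{Id}$. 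Gronwall then yields $\|D\Phi_j\|_{C^0_{\supp\chi_j,x}}\leq e^{K/\mu}\leq 1+C_{\mathbb{T}^3}$ once $a$ is taken sufficiently large (using $C_{\mathbb{T}^3}=\pi^{3/2}>1$), proving \eqref{DPhi_C0_est}. Plugging this back into $A(t)-\mathrm{Id}=-\int_{j/\mu}^{t}A(s)\,D(v_\ell+z_\ell)(s,X(s,x_0))\,ds$ gives the sharper $\|D\Phi_j-\mathrm{Id}\|_{C^0_{\supp\chi_j,x}}\lesssim K/\mu$, which is \eqref{DPhi-Id_C0_est}.

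For \eqref{DPhi_CN_est} I would induct on $N\geq 1$. Applying $\partial^\alpha$ to \eqref{Transport_eq} with $|\alpha|=N+1$ and using Leibniz's rule gives
\[
[\partial_t+(v_\ell+z_\ell)\cdot\nabla]\partial^\alpha \Phi_j = -\sum_{0\neq\beta\leq\alpha}\binom{\alpha}{\beta}\,\partial^\beta(v_\ell+z_\ell)\cdot\nabla\partial^{\alpha-\beta}\Phi_j,
\]
and, since $\partial^\alpha\Phi_j(j/\mu,\cdot)=0$ for $|\alpha|\geq 2$, integrating along the same characteristics over an interval of length at most $3/(4\mu)$ yields
\[
\|D^{N+1}\Phi_j\|_{C^0_{\supp\chi_j,x}} \lesssim \frac{1}{\mu}\sum_{k=1}^{N+1}\|D^k(v_\ell+z_\ell)\|_{C^0}\,\|D^{N+2-k}\Phi_j\|_{C^0}.
\]
The $k=1$ summand is $\lesssim (K/\mu)\|D^{N+1}\Phi_j\|_{C^0}$ and is absorbed into the left-hand side via \eqref{length_scales_with_L-dep}. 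Using $\|D^k(v_\ell+z_\ell)\|_{C^0}\lesssim K\ell^{-(k-1)}$ from the mollification estimates, the inductive hypothesis, and \eqref{DPhi_C0_est}, the dominant remaining contribution is $k=N+1$ (paired with $\|D\Phi_j\|_{C^0}\leq 1+C_{\mathbb{T}^3}$), giving exactly $(K/\mu)\ell^{-N}$; the intermediate $2\leq k\leq N$ carry an extra factor $K/\mu\ll 1$ inherited from the induction and are therefore subdominant.

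For \eqref{Phi_C^1tx_est} I decompose $\Phi_j(t,x)=x+\Psi_j(t,x)$, where the remark preceding the lemma ensures $\Psi_j$ is spatially $2\pi$-periodic, so $\|\Phi_j\|_{C^0_{\supp\chi_j,x}}\leq C_{\mathbb{T}^3}+\|\Psi_j\|_{C^0}$. The identity $\partial_t\Psi_j=-(v_\ell+z_\ell)-(v_\ell+z_\ell)\cdot\nabla\Psi_j$ with $\Psi_j(j/\mu,\cdot)=0$, integrated along characteristics, gives $\|\Psi_j\|_{C^0_{\supp\chi_j,x}}\lesssim (2M(L)^{1/2}+L^{1/4})/\mu\ll 1$, while the transport equation itself together with \eqref{DPhi_C0_est} gives $\|\partial_t\Phi_j\|_{C^0}\leq \|v_\ell+z_\ell\|_{C^0}\|D\Phi_j\|_{C^0}\leq (2M(L)^{1/2}+L^{1/4})(1+C_{\mathbb{T}^3})$. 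Comparing the three pieces with \eqref{def_C-L} shows each is bounded by $C_L$, hence \eqref{Phi_C^1tx_est}. The main subtlety I expect to encounter lies in the induction for \eqref{DPhi_CN_est}: one must verify that the successive $\ell^{-1}$ factors produced by differentiating the mollified source do not compound, which is precisely why the strengthened length-scale inequality \eqref{length_scales_with_L-dep}, rather than only \eqref{length_scales_original}, is required to absorb the $k=1$ self-coupling term at every step of the induction.
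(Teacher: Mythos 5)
Your proposal is correct and follows essentially the same route as the paper: the only difference is that the paper simply quotes the standard flow estimates of \cite[Proposition D.1, Eqs. (132), (133), (135), (136)]{BDLIS15} and combines them with the mollification bounds \eqref{Mollif_ext} and the parameter inequality \eqref{length_scales_with_L-dep}, whereas you re-derive those flow estimates from scratch via characteristics, Gronwall and induction on $N$. Your treatment of \eqref{Phi_C^1tx_est} (bounding each of $\|\Phi_j\|_{C^0}$, $\|D\Phi_j\|_{C^0}$, $\|\partial_t\Phi_j\|_{C^0}$ by $C_L$, using the periodic part $\Psi_j=\Phi_j-x$) matches the paper's bookkeeping and is consistent with the definition \eqref{def_C-L}.
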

\begin{proof}
	\eqref{DPhi_C0_est} follows immediately from \eqref{DPhi-Id_C0_est}. The estimate \eqref{DPhi-Id_C0_est}, in turn, is a simple consequence of \cite[Proposition D.1, Eq. $(135)$]{BDLIS15}: 
	\begin{align*}
	&\| D \Phi_{j} - \Id \|_{C_{\supp \chi_j,x}^{0}} \\
	&\leq \frac{1}{\mu}\| D(v_{\ell}+ z_{\ell}) \|_{C_{\supp \chi_j,x}^{0}}  \exp\left(\frac{1}{\mu}   \| D(v_{\ell} + z_{\ell}) \|_{C_{\supp \chi_j,x}^{0}} \right) \\
	&\leq \frac{1}{\mu} \left( \| v_{q} \|_{C_{L,x}^{1}} +  L^{1/4} \right) \exp\left( \frac{1}{\mu} \left( \| v_{q} \|_{C_{L,x}^{1}} + L^{1/4} \right) \right) \\
	&\lesssim \frac{1}{\mu} \left(  C_LM(L)^{1/2} \delta_{q}^{1/2} \lambda_{q} + L^{1/4} \right) \exp \left(\frac{1}{\mu} \left(  C_LM(L)^{1/2} \delta_{q}^{1/2} \lambda_{q} + L^{1/4} \right) \right) \\
	&\lesssim \frac{C_LM(L)^{1/2} \delta_{q}^{1/2} \lambda_{q} + L^{1/4}}{\mu} \ll 1,
	\end{align*}
	where we used the extended mollification estimates (\ref{Mollif_ext}), and (\ref{length_scales_with_L-dep}) twice.
	Likewise, for \eqref{DPhi_CN_est}, we employ \cite[Proposition D.1, Eq. $(136)$]{BDLIS15} to obtain
	\begin{align*}
	\| D \Phi_{j} \|_{C^0_{\supp \chi_j}C_{x}^{N}} &\leq C_{N}  \ell^{-N} \frac{1}{\mu}( \| v_{q} \|_{C_{t,x}^{1}} +  L^{1/4} ) \exp\left(  \frac{C_{N}}{\mu} \left( 2 M(L)^{1/2} \delta_{q}^{1/2} \lambda_{q} + L^{1/4} \right) \right) \\
	&\lesssim \frac{C_LM(L)^{1/2}\delta_{q}^{1/2} \lambda_{q} + L^{1/4}}{\mu} \ell^{-{N}} \ll \ell^{-N}.
	\end{align*}
	Finally, (\ref{Phi_C^1tx_est}) follows via
	\begin{align*}
	||\Phi_j||_{C^1_{\supp \chi_j,x}} &= ||\Phi_j||_{C^0_{\supp \chi_j,x}}+||D\Phi_{j}||_{C^0_{\supp \chi_j,x}}+ ||\partial_t\Phi_{j}||_{C^0_{\supp \chi_j,x}}\\
	&\leq C_{\mathbb{T}^3}+(1+C_{\mathbb{T}^3})(1+||v_{\ell}+z_{\ell}||_{C^0_{L,x}})\\
	&\leq C_{\mathbb{T}^3}+(1+C_{\mathbb{T}^3})\big(1+2M(L)^{1/2}+L^{1/4}\big) = C_L,
	\end{align*}
	where we used \cite[Proposition D.1, Eq. $(132), (133)$]{BDLIS15}, (\ref{DPhi_C0_est}) and (\ref{Mollif_ext}).
\end{proof}

\subsubsection{Construction of \texorpdfstring{$w_{q+1}$}{wq1} and \texorpdfstring{$v_{q+1}$}{vq1}}
The velocity at stage $q+1$ will be defined as
\begin{equation}\label{def v_q+1}
v_{q+1} := v_{\ell}+w^{(p)}_{q+1}+w_{q+1}^{(c)},
\end{equation}
i.e. the perturbation $w_{q+1}$ consists of a \textit{principal term} $w^{(p)}_{q+1}$ and a \textit{corrector term} $w^{(c)}_{q+1}$. The former is constructed as a sum of highly oscillating Beltrami waves with low amplitude (see Appendix \ref{App_Beltrami_waves}), while the latter is needed to ensure $\text{div}(w_{q+1}) = 0$.

Let $\Lambda_0$, $\Lambda_1$ and $\gamma^{(0)}_{\xi}, \gamma^{(1)}_{\xi}$ for $\xi \in \Lambda_0, \Lambda_1$ be as in the geometric lemma \ref{Lemma_geom}. We set $\Lambda_j := \Lambda_0$ for $j \in 2\mathbb{N}_0$ and $\Lambda_j := \Lambda_1$ for $j \in 2\mathbb{N}_0+1$ and, likewise,  $\gamma^{(j)}_{\xi} := \gamma^{(0)}_{\xi}$ and $\gamma^{(j)}_{\xi} := \gamma^{(1)}_{\xi}$ for $j \in 2\mathbb{N}_0$ and $j \in 2\mathbb{N}_0+1$, respectively. On $[0,T_L]\times \mathbb{T}^3$, define
\begin{equation}
a_{j,\xi}(t,x):= a_{q+1,j,\xi}(t,x) := \chi_{j}(t) M(t)^{\frac{1}{2}}\delta_{q+1}^{\frac{1}{2}}c_R^{\frac{1}{4}}\gamma_{\xi}^{(j)}\bigg(\text{Id}-\frac{\mathring{R}_{\ell}(t,x)}{M(t)\delta_{q+1}c_R^{\frac{1}{2}}}\bigg)
\end{equation}
and set 
$$w^{(p)}_{q+1}(t,x):= \sum_{j}\sum_{\xi \in \Lambda_j}a_{j,\xi}(t,x)B_{\xi}e^{i\lambda_{q+1}\xi \cdot \Phi_j(t,x)}.$$
Note that in view of Lemma \ref{Lemma_geom}, we need 
$$\underset{t \in [0,T_L]}{\text{sup}}\frac{||\mathring{R}_{\ell}(t)||_{C^0}}{M(t)\delta_{q+1}c_R^{\frac{1}{2}}} < r_0,$$
which, considering \eqref{A.3}, holds due to \eqref{c_R-restrictions}. Since $\mathring{R}_{\ell}(0)$ and $\partial_t \mathring{R}_{\ell}(0)$ as well as $\Phi_0(0)$ and $\partial_t\Phi_0(0)$ are deterministic and since $\chi_j(0) = 0$ for $j \neq 0$, $w^{(p)}_{q+1}(0)$ and $\partial_tw^{(p)}_{q+1}(0)$ are deterministic as well. Moreover, the $(\mathcal{F}_t)_{t \geq 0}$-adaptedness of $\mathring{R}_{\ell}$ and each $\Phi_j$ yields $(\mathcal{F}_t)_{t \geq 0}$-adaptedness of $w^{(p)}_{q+1}$.

For future reference it is useful to introduce the notation
$$\phi_{j,\xi}(t,x):=\phi_{q+1,j,\xi}(t,x):=  e^{i\lambda_{q+1}\xi \cdot (\Phi_j(t,x)-x)}$$
and
$$W_{q+1,\xi}(x) := W_{\xi}(x) := B_{\xi}e^{i \lambda_{q+1}\xi \cdot x},$$
which we use to rewrite
$$w^{(p)}_{q+1}(t,x) = \sum_j \sum_{\xi \in \Lambda_j} a_{j,\xi}(t,x)\phi_{j,\xi}(t,x)W_{\xi}(x) = \sum_j \sum_{\xi \in \Lambda_j} a_{j,\xi}(t,x)W_{\xi}(\Phi_j(t,x)).$$
Moreover, we set $|\Lambda| := |\Lambda_{j}|$, which is independent of $j\in \mathbb{N}_0$ and, for $N_0 \in \mathbb{N}$ as in Section \ref{Sect_parameter_choices}, introduce the absolute constant
\begin{equation}\label{def_constant_D}
D:= 2|\Lambda|\underset{j,\xi}{\text{sup}}||\gamma_{\xi}^{(j)}||_{C^{N_0}(\overline{B_{r_0}(\text{Id})})}.
\end{equation}

The aforementioned cancellation of $w_{q+1}^{(p)}$ with the stress $\mathring{R}_{\ell}$ is captured by the following lemma, which is crucial for the estimate \eqref{A.3} for the new Reynolds stress $\mathring{R}_{q+1}$.
\begin{lemma}\label{Lem_cancellation_osc-error}
On $[0,T_L]\times \mathbb{T}^3$, we have 
\begin{equation*}
w_{q+1}^{(p)}\otimes w_{q+1}^{(p)} + \mathring{R}_{\ell} = M(t)\delta_{q+1}c_R^{\frac{1}{2}}\,\Id + \sum_{j,j', \xi+\xi' \neq 0}a_{j,\xi}a_{j',\xi'}\phi_{j,\xi}\phi_{j',\xi'}W_{\xi}\otimes W_{\xi'},
\end{equation*}
where the summation is understood to range over pairs $(j,\xi)$ and $(j',\xi')$ with $\xi \in \Lambda_j$ and $\xi' \in \Lambda_{j'}$.
\end{lemma}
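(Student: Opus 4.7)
The plan is a direct expansion followed by a case split on whether $\xi+\xi'=0$. Writing
$$w_{q+1}^{(p)} \otimes w_{q+1}^{(p)} = \sum_{j,j'} \sum_{\xi \in \Lambda_j,\, \xi' \in \Lambda_{j'}} a_{j,\xi}\, a_{j',\xi'}\, \phi_{j,\xi}\, \phi_{j',\xi'}\, W_\xi \otimes W_{\xi'},$$
the identity reduces to showing that the partial sum over $\xi+\xi'=0$ equals $M(t)\delta_{q+1}c_R^{1/2}\Id - \mathring{R}_\ell$, while the complementary partial sum is exactly the oscillatory remainder in the statement.

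To handle the $\xi+\xi'=0$ terms, I would first argue that only diagonal indices $j=j'$ can contribute. Indeed, if $\xi' = -\xi$, then $-\xi$ must lie in $\Lambda_{j'}$; by the geometric Lemma \ref{Lemma_geom}, $\Lambda_0$ and $\Lambda_1$ are each closed under negation but disjoint, so this forces $j$ and $j'$ to share parity. On the other hand, the supports of the cutoffs satisfy $\chi_j(t)\chi_{j'}(t)=0$ whenever $|j-j'|\geq 2$. The two constraints together leave only $j=j'$. For such diagonal contributions the transport phases cancel, $\phi_{j,\xi}\phi_{j,-\xi}\equiv 1$, and $W_\xi\otimes W_{-\xi} = B_\xi\otimes B_{-\xi}$, so no spatial oscillation survives.

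Next I would collect the diagonal contributions. Using the symmetry $\gamma^{(j)}_{-\xi}=\gamma^{(j)}_\xi$ provided by Lemma \ref{Lemma_geom}, and setting $R(t,x) := \Id - \mathring{R}_\ell(t,x)/(M(t)\delta_{q+1}c_R^{1/2})$, the $\xi+\xi'=0$ part becomes
$$\sum_j \chi_j^2(t)\, M(t)\,\delta_{q+1}\, c_R^{1/2} \sum_{\xi\in\Lambda_j} \bigl(\gamma^{(j)}_\xi(R)\bigr)^2 B_\xi\otimes B_{-\xi}.$$
Thanks to \eqref{A.3} together with \eqref{c_R-restrictions}, $R$ lies in the ball $\overline{B_{r_0}(\Id)}$ on which the geometric lemma applies, and its conclusion reconstructs the inner sum as $R$ itself. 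Invoking the partition of unity $\sum_j \chi_j^2(t) = 1$ from \eqref{chi_squared_part_unity} then yields $M(t)\delta_{q+1}c_R^{1/2}\Id - \mathring{R}_\ell$. Transferring $\mathring{R}_\ell$ to the left-hand side and leaving the oscillatory sub-sum untouched gives the stated formula.

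The only nontrivial step is the reduction to $j=j'$; it hinges on combining the disjointness and negation-symmetry of $\Lambda_0,\Lambda_1$ with the narrow supports of the $\chi_j$. Everything else is direct expansion together with a pointwise application of the geometric lemma.
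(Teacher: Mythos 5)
Your proposal is correct and follows essentially the same route as the paper's proof: expand the tensor product, use the parity of the index sets $\Lambda_0,\Lambda_1$ together with the overlap condition $\chi_j\chi_{j'}\equiv 0$ for $|j-j'|\geq 2$ to reduce the $\xi+\xi'=0$ part to the diagonal $j=j'$, then apply the geometric lemma and $\sum_j\chi_j^2\equiv 1$. The only step to make explicit is that passing from $\sum_{\xi}\bigl(\gamma^{(j)}_{\xi}\bigr)^2 B_\xi\otimes B_{-\xi}$ to the form $\frac{1}{2}\sum_{\xi}\bigl(\gamma^{(j)}_{\xi}\bigr)^2(\Id-\xi\otimes\xi)$ required by \eqref{eq_geom_lemma} uses the Beltrami identity \eqref{Beltrami_aux_1} after symmetrizing over $\xi\mapsto-\xi$, exactly as the paper does.
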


\begin{proof}
	For abbreviation, denote the second summand of the right-hand side of the assertion by (II). By definition of $w_{q+1}^{(p)}$, since $\chi_j\chi_{j'} \equiv 0$ if $|j-j'| \geq 2$ and $\xi+\xi'=0$ for $\xi \in \Lambda_j$ and $\xi' \in \Lambda_{j'}$ implies $|j-j'| \in 2\mathbb{N}_0$, we have
	\begin{align*}
	w_{q+1}^{(p)}\otimes w_{q+1}^{(p)}(t,x) &= M(t)\delta_{q+1}c_R^{\frac{1}{2}}\sum_{j}\chi_j(t)^2\sum_{\xi \in \Lambda_j}\gamma_{\xi}^{(j)}\bigg(\text{Id}-\frac{\mathring{R}_{\ell}(t,x)}{M(t)\delta_{q+1}c_R^{\frac{1}{2}}}\bigg)^2B_{\xi}\otimes B_{-\xi}+ \text{(II)} \\ &=M(t)\delta_{q+1}c_R^{\frac{1}{2}}\sum_j\chi_j(t)^2\frac{1}{2}\sum_{\xi \in \Lambda_j}\gamma_{\xi}^{(j)}\bigg(\text{Id}-\frac{\mathring{R}_{\ell}(t,x)}{M(t)\delta_{q+1}c_R^{\frac{1}{2}}}\bigg)^2(\text{Id}-\xi \otimes \xi)+ \text{(II)}\\
	&=M(t)\delta_{q+1}c_R^{\frac{1}{2}}\sum_j\chi_j(t)^2\bigg(\text{Id}-\frac{\mathring{R}_{\ell}(t,x)}{M(t)\delta_{q+1}c_R^{\frac{1}{2}}}\bigg)+\text{(II)}\\
	&=M(t)\delta_{q+1}c_R^{\frac{1}{2}}\,\text{Id}-\mathring{R}_{\ell}+ \text{(II)}.
	\end{align*}
	Here we used \eqref{Beltrami_aux_1}, \eqref{eq_geom_lemma} and $\sum_j \chi_j^2 \equiv 1$ for the second, third and final equation, respectively. 
\end{proof}
Next, we introduce the corrector part $w^{(c)}_{q+1}$, which accounts for the fact that the principal part $w^{(p)}_{q+1}$ itself is not divergence-free. Setting
$$w^{(c)}_{q+1}(t,x) := \sum_{j}\sum_\xi\bigg[\frac{i}{\lambda_{q+1}}\nabla a_{j,\xi}(t,x)-a_{j,\xi}(t,x)\big(
D\Phi_j(t,x)-\text{Id}\big)\xi\bigg] \times W_{\xi}(\Phi_j(t,x)),\quad (t,x) \in [0,T_L]\times \mathbb{T}^3,$$
a direct calculation yields
$$w^{(p)}_{q+1}+w^{(c)}_{q+1} = \frac{1}{\lambda_{q+1}}\sum_j \sum_{\xi}\curl \big( i a_{j,\xi} \xi \times W_{\xi}(\Phi_j)\big),$$
which implies that the total perturbation
$$w_{q+1}:= w_{q+1}^{(p)}+w_{q+1}^{(c)}$$
is divergence-free. Since $w^{(c)}_{q+1}(0)$ and $\partial_tw^{(c)}_{q+1}(0)$ are deterministic and $w^{(c)}_{q+1}$ is $(\mathcal{F}_t)_{t \geq 0}$-adapted, together with the analogous observations for $w^{(p)}_{q+1}$ from above, it follows that $w_{q+1}(0)$ and $\partial_tw_{q+1}(0)$ are deterministic and $w_{q+1}$ is $(\mathcal{F}_t)_{t \geq 0}$-adapted. Finally, define $v_{q+1}$ as in (\ref{def v_q+1}) and note that $v_{q+1}(0)$ and $\partial_tv_{q+1}(0)$ are deterministic and that $v_{q+1}$ is $(\mathcal{F}_t)_{t \geq 0}$-adapted. Moreover, since by construction $w_{q+1}$ is smooth in $(t,x)$, so is $v_{q+1}$.

\subsubsection{Estimates for \texorpdfstring{$v_{q+1}-v_q$}{vq}}
Before we show (\ref{A.1}) and (\ref{A.2}), we collect useful estimates in Lemma \ref{Lemma_all_estimates}, which we use not only here, but also for the estimate (\ref{A.3}) on the new Reynolds stress later on. We define the coefficients of the full perturbation via 
\begin{equation*}
w_{q+1} = w_{q+1}^{(p)} + w_{q+1}^{(c)} =: \sum_{j,\xi} L_{j,\xi} e^{i \lambda_{q+1} \xi \cdot \Phi_{j}(t,x)} = \sum_{j,\xi} L_{j,\xi} \phi_{j,\xi} e^{i \lambda_{q+1} \xi \cdot x},
\end{equation*}
i.e.
\begin{align*}
L_{j,\xi} &:= \left( a_{j,\xi} B_{\xi} +  \left( \frac{i}{\lambda_{q+1}} \nabla a_{j,\xi} - a_{j,\xi} (D \Phi_{j} - \text{Id}) \xi   \right)  \times B_{\xi} \right).
\end{align*}

\begin{lemma}\label{Lemma_all_estimates}
	For each $N \geq 0$,  $(j,\xi)$ with $\xi \in \Lambda_j$ and $t \in [0,T_L]$, we have the following estimates for the coefficients of the perturbation $w_{q+1}$.
	\begin{align}
	\label{MIT_eq_aLCN_est} \| a_{j,\xi} \|_{C_{t}^{0} C_{x}^{N}} + \| L_{j,\xi} \|_{C_{t}^{0} C_{x}^{N}}&\lesssim M(t)^{1/2} \delta_{q+1}^{1/2} \ell^{-N}, \\
	\label{MIT_eq_phiCN_est}\| \phi_{j,\xi} \|_{C_{\supp \chi_{j}}^{0} C_{x}^{N}} &\lesssim  \lambda_{q+1}^{(1-\beta)N}, \\
	\label{MIT_eq_aL_timeCN_est} \| \partial_{t} a_{j,\xi}  \|_{C_{t}^{0} C_{x}^{N}} + \| \partial_{t} L_{j,\xi}  \|_{C_{t}^{0} C_{x}^{N}}  &\lesssim M(t)^{1/2} \delta_{q+1}^{1/2} \ell^{-(N+1)}, \\
	\label{MIT_eq_aL_materialCN_est}  \| (\partial_{t} + (v_{\ell} + z_{\ell}) \cdot \nabla ) a_{j,\xi} \|_{C_{t}^{0}C_{x}^{N}} + \| (\partial_{t} + (v_{\ell} + z_{\ell}) \cdot \nabla ) L_{j,\xi} \|_{C_{t}^{0} C_{x}^{N} } &\lesssim M(t) \delta_{q+1}^{1/2}  \ell^{-(N+1)}.
	\end{align}
\end{lemma}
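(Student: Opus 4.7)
The four inequalities are all consequences of combining three ingredients: the mollification estimates \eqref{Mollif_estim} for $\mathring R_\ell$ (plus the bounds on $v_\ell,z_\ell$), the flow estimates of Lemma~\ref{Lemma_est_Phi-j} for $\Phi_j$, and the parameter relations \eqref{length_scales_original}--\eqref{length_scales_with_L-dep}. The only semi-delicate point is the material‐derivative estimate \eqref{MIT_eq_aL_materialCN_est}, because we must keep only a single power of $M$ and cannot afford the factor $\|v_\ell\|_{C^0}\|D^2\Phi_j\|_{C^0}$ that would arise from a naive use of $\partial_t D\Phi_j$.

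\textbf{Step 1: estimate on $a_{j,\xi}$ and on $\phi_{j,\xi}$.} Write $a_{j,\xi}=\chi_j\,M^{1/2}\delta_{q+1}^{1/2}c_R^{1/4}\,\gamma_\xi^{(j)}(\mathrm{Id}-\mathring R_\ell/(M\delta_{q+1}c_R^{1/2}))$. Apply the Faà di Bruno formula to the composition; the prefactor $M\delta_{q+1}c_R$ from $\|\mathring R_\ell\|_{C^N_{t,x}}\lesssim \ell^{-N}M\delta_{q+1}c_R$ cancels exactly with the scaling factor $M\delta_{q+1}c_R^{1/2}$ inside $\gamma_\xi^{(j)}$, leaving $\|a_{j,\xi}\|_{C^0_tC^N_x}\lesssim M^{1/2}\delta_{q+1}^{1/2}\ell^{-N}$ (the implicit constant involves $\|\gamma_\xi^{(j)}\|_{C^{N}(\overline{B_{r_0}(\mathrm{Id})})}$, which is finite on the range of $\mathrm{Id}-\mathring R_\ell/(M\delta_{q+1}c_R^{1/2})$ thanks to \eqref{A.3} and \eqref{c_R-restrictions}). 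For $\phi_{j,\xi}=e^{i\lambda_{q+1}\xi\cdot(\Phi_j-x)}$, the Faà di Bruno formula yields a sum of terms of the form $\lambda_{q+1}^{k}\prod_i \|D^{m_i}(\Phi_j-x)\|$ with $\sum m_i=N$ and $k\leq N$. Using \eqref{DPhi-Id_C0_est} and \eqref{DPhi_CN_est} together with the choice of $\ell$, which gives $\ell^{-1}\lesssim \lambda_{q+1}^{3/4}\leq\lambda_{q+1}^{1-\beta}$ since $\beta<1/4$, every such term is bounded by $\lambda_{q+1}^{(1-\beta)N}$, which proves \eqref{MIT_eq_phiCN_est}.

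\textbf{Step 2: estimate on $L_{j,\xi}$.} We have $L_{j,\xi}=a_{j,\xi}B_\xi+(i\lambda_{q+1}^{-1})\nabla a_{j,\xi}\times B_\xi-a_{j,\xi}(D\Phi_j-\mathrm{Id})\xi\times B_\xi$. The first term is controlled by Step 1, the second by Step 1 together with the fact that the extra derivative costs $\ell^{-1}$, which is then beaten by $\lambda_{q+1}^{-1}$ via $(\ell\lambda_{q+1})^{-1}\leq\lambda_{q+1}^{-\beta}\ll 1$ from \eqref{length_scales_original}, and the third by \eqref{DPhi-Id_C0_est}--\eqref{DPhi_CN_est}, which give $\|D\Phi_j-\mathrm{Id}\|_{C^0_{\mathrm{supp}\,\chi_j}C^N_x}\lesssim \lambda_{q+1}^{-\beta}\ell^{-N}$ on the relevant set. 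This establishes \eqref{MIT_eq_aLCN_est}.

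\textbf{Step 3: plain time derivative \eqref{MIT_eq_aL_timeCN_est}.} The time derivative of $a_{j,\xi}$ produces three kinds of terms: (i) $\partial_t\chi_j$, which contributes the factor $\mu\lesssim\ell^{-1}$ by \eqref{length_scales_original}; (ii) $\partial_t M^{1/2}\lesssim LM^{1/2}\lesssim \ell^{-1}M^{1/2}$; (iii) the chain rule applied to the $\gamma_\xi^{(j)}$-composition, which replaces one spatial derivative of $\mathring R_\ell$ by a time derivative, costing an extra $\ell^{-1}$ via \eqref{Mollif_estim}. Combined, all terms are $\lesssim M^{1/2}\delta_{q+1}^{1/2}\ell^{-(N+1)}$. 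For $\partial_t L_{j,\xi}$ one additionally needs $\partial_t D\Phi_j$: differentiating $\partial_t\Phi_j=-(v_\ell+z_\ell)\cdot\nabla\Phi_j$ gives $\|\partial_t D\Phi_j\|_{C^0}\lesssim \|D(v_\ell+z_\ell)\|_{C^0}+\|v_\ell+z_\ell\|_{C^0}\|D^2\Phi_j\|_{C^0}$, and the right-hand side is controlled by $C_LM^{1/2}\delta_q^{1/2}\lambda_q+L^{1/4}\lesssim\mu\lesssim\ell^{-1}$ thanks to \eqref{length_scales_with_L-dep} (the $\|D^2\Phi_j\|$ piece is additionally suppressed by $(\mu\ell)^{-1}\ll 1$); this completes \eqref{MIT_eq_aL_timeCN_est} after accounting for $N$ spatial derivatives.

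\textbf{Step 4: material derivative \eqref{MIT_eq_aL_materialCN_est}, the key step.} Setting $D_t:=\partial_t+(v_\ell+z_\ell)\cdot\nabla$, we get $D_t a_{j,\xi}=\partial_t a_{j,\xi}+(v_\ell+z_\ell)\cdot\nabla a_{j,\xi}$. The first summand is already controlled by \eqref{MIT_eq_aL_timeCN_est}, hence by $M\delta_{q+1}^{1/2}\ell^{-(N+1)}$ (since $M\geq 1$). The second is bounded by $(\|v_\ell\|_{C^0}+\|z_\ell\|_{C^0})\ell^{-(N+1)}M^{1/2}\delta_{q+1}^{1/2}\lesssim (M^{1/2}+L^{1/4})M^{1/2}\delta_{q+1}^{1/2}\ell^{-(N+1)}\lesssim M\delta_{q+1}^{1/2}\ell^{-(N+1)}$, which accounts for the additional factor $M$ in the target bound. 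For $D_t L_{j,\xi}$, the crucial observation — which avoids inflating the power of $M$ — is that $D_t\Phi_j=0$ implies, after spatial differentiation, the identity $D_t D\Phi_j=-D(v_\ell+z_\ell)\cdot D\Phi_j$; this replaces the naive bound $\|v_\ell\|_{C^0}\|D^2\Phi_j\|$ by $\|D(v_\ell+z_\ell)\|_{C^0}\|D\Phi_j\|_{C^0}\lesssim C_LM^{1/2}\delta_q^{1/2}\lambda_q+L^{1/4}\lesssim \mu\lesssim\ell^{-1}$ via \eqref{length_scales_with_L-dep}. Consequently $a_{j,\xi}\,D_t D\Phi_j\lesssim M^{1/2}\delta_{q+1}^{1/2}\ell^{-1}\lesssim M\delta_{q+1}^{1/2}\ell^{-1}$, and the remaining pieces ($D_t a_{j,\xi}(D\Phi_j-\mathrm{Id})$, $\lambda_{q+1}^{-1}D_t\nabla a_{j,\xi}$ handled via the commutator $[\nabla,D_t]=D(v_\ell+z_\ell)\cdot\nabla$) are absorbed using $\|D\Phi_j-\mathrm{Id}\|\ll 1$ and $(\ell\lambda_{q+1})^{-1}\ll 1$. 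Iterating for higher $N$ gives the claimed $\ell^{-(N+1)}$.

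\textbf{Main obstacle.} The hardest point is Step 4: one must resist bounding $D_t D\Phi_j$ by $\partial_t D\Phi_j+\|v_\ell+z_\ell\|_{C^0}\|D^2\Phi_j\|_{C^0}$ (which would cost an extra $M^{1/2}$ and destroy \eqref{A.3}), and instead exploit the cancellation encoded in the transport equation \eqref{Transport_eq}. Everything else is bookkeeping guided by \eqref{length_scales_original}--\eqref{length_scales_with_L-dep}.
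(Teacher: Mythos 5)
Your overall strategy coincides with the paper's: product/chain rules for $a_{j,\xi}$, $\phi_{j,\xi}$, $L_{j,\xi}$, the mollification estimates (\ref{Mollif_estim}), the flow bounds of Lemma \ref{Lemma_est_Phi-j}, the parameter relations \eqref{length_scales_original}--\eqref{length_scales_with_L-dep}, and the transport equation \eqref{Transport_eq} to rewrite $\partial_t D\Phi_j$. Two of your justifications are wrong as written, though both are repairable with facts the paper already provides. First, in Step 1 the claim $\ell^{-1}\lesssim\lambda_{q+1}^{3/4}$ is false: by \eqref{def_ell_q}, $\ell^{-1}=\delta_{q+1}^{1/8}\delta_q^{-1/8}\lambda_q^{1/4}\lambda_{q+1}^{3/4}$, and since $c>\frac52$ the factor $\lambda_q^{1/4}$ overwhelms $\delta_{q+1}^{1/8}\delta_q^{-1/8}$, so $\ell^{-1}\gg\lambda_{q+1}^{3/4}$; the condition $\beta<\frac14$ is irrelevant here. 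The inequality you actually need, $\ell^{-1}\le\lambda_{q+1}^{1-\beta}$, is true, but it is precisely the relation $(\ell\lambda_{q+1})^{-1}\le\lambda_{q+1}^{-\beta}$ contained in \eqref{length_scales_original} and should be invoked as such; with that substitution your bookkeeping for $\phi_{j,\xi}$ closes and agrees with the paper's. Second, in Steps 3--4 the parenthetical ``suppressed by $(\mu\ell)^{-1}\ll1$'' is backwards: \eqref{mu_ll_ellinv} states $\mu\ell\ll1$, i.e. $(\mu\ell)^{-1}\gg1$. The true mechanism is that \eqref{DPhi_CN_est} and \eqref{length_scales_with_L-dep} give $\|v_\ell+z_\ell\|_{C^0_{t,x}}\|D\Phi_j\|_{C^0_{\supp\chi_j}C^1_x}\lesssim\bigl(2M(L)^{1/2}+L^{1/4}\bigr)\lambda_{q+1}^{-\beta}\ell^{-1}$, and the $L$-dependent prefactor is absorbed by choosing $a\ge a_0(L,\beta)$ large. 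You must say this explicitly, because the implicit constants of the lemma have to be absolute (they feed into $c_0$ and hence the constraint \eqref{c_R-restrictions}); the paper makes exactly this choice of $a$ at this point of the argument.

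Your ``main obstacle'' paragraph also mischaracterizes the difficulty. Bounding $\partial_tD\Phi_j$ by $\|D(v_\ell+z_\ell)\|\,\|D\Phi_j\|+\|v_\ell+z_\ell\|\,\|D^2\Phi_j\|$ does not destroy anything: this is what the paper does, the potentially large factors being constants depending only on $L$ that are killed by $\lambda_{q+1}^{-\beta}$ once $a$ is large. Moreover, for \eqref{MIT_eq_aL_materialCN_est} the advective term simply pays the factor $\|v_\ell+z_\ell\|_{C^0_{t,x}}\lesssim M(t)^{1/2}$ (using $L^{1/4}\le M(t)^{1/2}$), which is exactly the additional $M(t)^{1/2}$ the stated bound allows; no cancellation beyond the transport structure already exploited in \eqref{MIT_eq_aL_timeCN_est} is needed. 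Your identity $D_tD\Phi_j=-D(v_\ell+z_\ell)\cdot D\Phi_j$ is a correct and slightly cleaner alternative, but it is not what saves estimate \eqref{A.3}.
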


\begin{proof}
	For \eqref{MIT_eq_aLCN_est}, by (\ref{Mollif_estim}) and the chain and product rule \cite[Eq. $(127)$, $(130)$]{BDLIS15}, we get
	\begin{align}\label{MIT_eq_aCN_est_INPROOF}
	\notag \| a_{j,\xi} \|_{C_{t}^{0} C_{x}^{N}} &\leq  c_{R}^{1/4} M(t)^{1/2} \delta_{q+1}^{1/2} \left\| \gamma_{\xi}^{(j)} \left( \text{Id} - \frac{\mathring{R}_{\ell} }{c_{R}^{1/2} M(t) \delta_{q+1}} \right) \right\|_{C_{t}^{0} C_{x}^{N}} \\\notag
	&\leq  c_R^{1/4}M(t)^{1/2} \delta_{q+1}^{1/2} \Bigg( \| \gamma_{\xi}^{(j)}  \|_{C^{1}}  \frac{ \left\|\mathring{R}_{\ell} \right\|_{C_{t}^{0} C_{x}^{N}} }{c_{R}^{1/2} M(t) \delta_{q+1}}  + \| \gamma_{\xi}^{(j)}  \|_{C^{N}} \Bigg( \frac{ \left\|\mathring{R}_{\ell} \right\|_{C_{t}^{0} C_{x}^{1}} }{c_{R}^{1/2} M(t) \delta_{q+1}} \Bigg)^{N} \Bigg) \\
	&\lesssim  c_R^{1/4}M(t)^{1/2} \delta_{q+1}^{1/2} \ell^{-N}. 
	\end{align}
	Similarly, we find by using \eqref{MIT_eq_aCN_est_INPROOF}, \eqref{length_scales_original} and Lemma \ref{Lemma_est_Phi-j} that
	\begin{align}\label{MIT_eq_LCN_est_INPROOF}
	\notag \| L_{j,\xi} \|_{C_{t}^{0} C_{x}^{N}} &\lesssim \| a_{j,\xi} \|_{C_{t}^{0} C_{x}^{N}} + \frac{1}{\lambda_{q+1}} \| \nabla a_{j,\xi} \|_{C_{t}^{0} C_{x}^{N}} +   \| a_{j,\xi} \|_{C_{t}^{0}C_{x}^{N}} \| D \Phi_{j} - \Id \|_{C_{\supp \chi_{j},x}^{0}} + \| a_{j,\xi} \|_{C_{t,x}^{0}} \| D \Phi_{j} - \Id \|_{C_{\supp \chi_{j}}^{0} C_{x}^{N}} \\
	&\notag \lesssim c_{R}^{1/4} M(t)^{1/2} \delta_{q+1}^{1/2} \ell^{-N} \left( 2 + \frac{1}{\lambda_{q+1} \ell} + \frac{2 M(L)^{1/2} C_{L} \delta_{q}^{1/2} \lambda_{q} + L^{1/4}}{\mu} \right)\\
	&\lesssim c_{R}^{1/4} M(t)^{1/2} \delta_{q+1}^{1/2} \ell^{-N}. 
	\end{align}
	The phase function is estimated by the chain rule, (\ref{Mollif_estim}), (\ref{length_scales_with_L-dep}) and \eqref{length_scales_original} as follows.
	\begin{align*}
	\| \phi_{j,\xi} \|_{C_{\supp \chi_{j}}^{0} C_{x}^{N}} &\lesssim \left( \lambda_{q+1} \| D \Phi_{j} \|_{C_{\supp \chi_{j}}^{0} C_{x}^{N-1}} + \lambda_{q+1}^{N} \| D \Phi_{j} - \text{Id} \|_{C_{\supp \chi_{j},x}^{0}}^{N} \right) \\
	&\lesssim  \left( \lambda_{q+1} \frac{\| v_{\ell} + z_{\ell} \|_{C_{t}^{0} C_{x}^{N}}}{\mu} e^{C_{N} \frac{1}{\mu} \| v_{\ell} + z_{\ell} \|_{C_{t}^{0} C_{x}^{1}} } + \lambda_{q+1}^{N} \left(  \frac{1}{\mu} \| v_{\ell} + z_{\ell} \|_{C_{t}^{0} C_{x}^{1}} e^{\frac{1}{\mu} \| v_{\ell} + z_{\ell} \|_{C_{t}^{0} C_{x}^{1}}} \right)^{N} \right) \\
	&\lesssim  \left( \lambda_{q+1} \frac{1}{\mu} \| v_{\ell} + z_{\ell} \|_{C_{t}^{0} C_{x}^{1}} e^{C_{N} \frac{1}{\mu} \| v_{\ell} + z_{\ell} \|_{C_{t}^{0} C_{x}^{1}} } \ell^{-N + 1} + \lambda_{q+1}^{N} \left(  \frac{1}{\mu} \| v_{\ell} + z_{\ell} \|_{C_{t}^{0} C_{x}^{1}} e^{\frac{1}{\mu} \| v_{\ell} + z_{\ell} \|_{C_{t}^{0} C_{x}^{1}}} \right)^{N} \right) \\
	&\lesssim  \left( \ell \lambda_{q+1} \lambda_{q+1}^{-\beta} \ell^{-N} + \lambda_{q+1}^{N(1-\beta)} \right) \lesssim \lambda_{q+1}^{N(1-\beta)}.
	\end{align*}
	Now we turn to the estimates containing temporal derivatives. For brevity we suppress in our notation the argument of $\gamma_{\xi}^{(j)}$. First, we apply the product and chain rule as well as \eqref{MIT_eq_aLCN_est} for $N=0$, \eqref{Mollif_estim} and \eqref{mu_ll_ellinv}, and choose $a$ sufficiently large to have $L \leq \ell^{-1}$ to find 
	\begin{align*}
	\left\| \partial_{t} a_{j,\xi} \right\|_{C_{t,x}^{0}} &= c_{R}^{1/4} M(t)^{1/2} \delta_{q+1}^{1/2} \left\|   2L \chi_{j} \gamma_{\xi}^{(j)} - \chi_{j} (D \gamma_{\xi}^{(j)}) \frac{\partial_{t} \mathring{R}_{\ell}}{c_{R}^{1/2} M(t) \delta_{q+1}} + 4L \chi_{j} \gamma_{\xi}^{(j)} \frac{\mathring{R}_{\ell}}{c_{R}^{1/2} M(t) \delta_{q+1}} + \chi_{j}' \gamma_{\xi}^{(j)}  \right\|_{C_{t,x}^{0}} \\
	&\lesssim M(t)^{1/2} \delta_{q+1}^{1/2} \left( \ell^{-1} + \mu \right) \lesssim M(t)^{1/2} \delta_{q+1}^{1/2} \ell^{-1}.
	\end{align*}
	In a similar way, we find for higher spatial derivatives
	\begin{align*}
	\left[ \partial_{t} a_{j,\xi} \right]_{C_{t}^{0} C_{x}^{N}} &= c_{R}^{1/4} M(t)^{1/2} \delta_{q+1}^{1/2} \left\| \chi_{j} D^{N} \left( 2L \gamma_{\xi}^{(j)} + (D \gamma_{\xi}^{(j)}) \frac{\partial_{t} \mathring{R}_{\ell}}{c_{R}^{1/2} M(t) \delta_{q+1}} - 4L \gamma_{\xi}^{(j)} \frac{\mathring{R}_{\ell}}{c_{R}^{1/2} M(t) \delta_{q+1}} \right) + \chi_{j}' D^{N} \gamma_{\xi}^{(j)} \right\|_{C_{t,x}^{0}}.
	\end{align*}
	Estimating as for the case $N=0$, the claimed inequality for $a_{j,\xi}$ follows.
	
	Concerning $L_{j,\xi}$, for $N \geq 0$, we obtain, using \eqref{Transport_eq} as well as product and chain rule, the previous estimates and (\ref{Mollif_estim})
	\begin{align*}
	\left\| \partial_{t} L_{j,\xi} \right\|_{C_{t}^{0} C_{x}^{N}} &\leq \left\| \partial_{t} a_{j,\xi} \right\|_{C_{t}^{0} C_{x}^{N}} + \frac{1}{\lambda_{q+1}} \left\| \nabla \partial_{t} a_{j,\xi} \right\|_{C_{t}^{0} C_{x}^{N}} + \| (\partial_{t} a_{j,\xi} ) (D \Phi_{j} - \text{Id}) + a_{j,\xi} D \partial_{t} \Phi_{j} \|_{C_{t}^{0} C_{x}^{N}} \\
	&\lesssim \left\| \partial_{t} a_{j,\xi} \right\|_{C_{t}^{0} C_{x}^{N}} + \frac{1}{\lambda_{q+1}} \left\| \nabla \partial_{t} a_{j,\xi} \right\|_{C_{t}^{0} C_{x}^{N}} +   \| \partial_{t} a_{j,\xi}  \|_{C_{t}^{0} C_{x}^{N}}  \| D \Phi_{j} - \Id \|_{C_{\supp \chi_{j},x}^{0}} + \| \partial_{t} a_{j,\xi}  \|_{C_{t,x}^{0}}  \| D \Phi_{j}  \|_{C_{\supp \chi_{j}}^{0} C_{x}^{N}}  \\
	&\quad +  \| a_{j,\xi} \|_{C_{t}^{0} C_{x}^{N}}  \| D \left[ (v_{\ell} + z_{\ell} ) \cdot \nabla \Phi_{j} \right] \|_{C_{\supp \chi_{j},x}^{0}} + \| a_{j,\xi} \|_{C_{t,x}^{0}}  \| D \left[ (v_{\ell} + z_{\ell} ) \cdot \nabla \Phi_{j} \right] \|_{C_{\supp \chi_{j}}^{0} C_{x}^{N}}  \\
	&\lesssim \left\| \partial_{t} a_{j,\xi} \right\|_{C_{t}^{0} C_{x}^{N}} + \frac{1}{\lambda_{q+1}} \left\| \partial_{t} a_{j,\xi} \right\|_{C_{t}^{0} C_{x}^{N+1}} +   \| \partial_{t} a_{j,\xi}  \|_{C_{t}^{0} C_{x}^{N}}  \| D \Phi_{j} - \Id \|_{C_{\supp \chi_{j},x}^{0}} + \| \partial_{t} a_{j,\xi}  \|_{C_{t,x}^{0}}  \| D \Phi_{j}  \|_{C_{\supp \chi_{j}}^{0} C_{x}^{N}} \\
	&\quad + \| a_{j,\xi} \|_{C_{t}^{0} C_{x}^{N}}  \left(  \| v_{\ell} + z_{\ell} \|_{C_{t}^{0} C_{x}^{1}} \| D \Phi_{j} \|_{C_{\supp \chi_{j},x}^{0}} + \| v_{\ell} + z_{\ell}  \|_{C_{t,x}^{0}} \| D \Phi_{j} \|_{C_{\supp \chi_{j}}^{0} C_{x}^{1}}  \right) \\
	&\quad + \| a_{j,\xi} \|_{C_{t,x}^{0}} \left(   \|  v_{\ell} + z_{\ell} \|_{C_{t}^{0} C_{x}^{N+1}} \| D \Phi_{j}  \|_{C_{\supp \chi_{j},x}^{0} } + \|  v_{\ell} + z_{\ell} \|_{C^0_{t,x}} \|  D \Phi_{j}  \|_{C_{\supp \chi_{j}}^{0} C_{x}^{N+1} } \right).
	\end{align*}
	We will show how to further estimate the terms in brackets of the penultimate line. The ones from the last line can be estimated in the same way. An application of Lemma \ref{Lemma_est_Phi-j}, \eqref{length_scales_with_L-dep} and \eqref{mu_ll_ellinv} yields
	\begin{align*}
        &\| v_{\ell} + z_{\ell} \|_{C_{t}^{0} C_{x}^{1}} \| D \Phi_{j} \|_{C_{\supp \chi_{j},x}^{0}} + \| v_{\ell} + z_{\ell}  \|_{C_{t,x}^{0}} \| D \Phi_{j} \|_{C_{\supp \chi_{j}}^{0} C_{x}^{1}} \\
        &=  \frac{\| v_{\ell} + z_{\ell} \|_{C_{t}^{0} C_{x}^{1}}}{\mu} \| D \Phi_{j} \|_{C_{\supp \chi_{j},x}^{0}} \mu + \| v_{\ell} + z_{\ell}  \|_{C_{t,x}^{0}} \| D \Phi_{j} \|_{C_{\supp \chi_{j}}^{0} C_{x}^{1}} \\
        &\lesssim \frac{ 2 M(L)^{1/2} \delta_{q}^{1/2} \lambda_{q} + L^{1/4}}{\mu} \mu + (2 M(L)^{1/2} + L^{1/4})\frac{C_LM(L)^{1/2}\delta_{q}^{1/2} \lambda_{q} + L^{1/4}}{\mu} \ell^{-{1}} \\
        &\leq \mu + (2 M(L)^{1/2} + L^{1/4}) \lambda_{q+1}^{-\beta} \ell^{-1} \\
        &\leq \mu + \ell^{-1} \lesssim \ell^{-1},
	\end{align*}
	for $a \geq a_{0}(L,\beta)$ sufficiently large to absorb the $L$-dependent constant in the penultimate line.

	Thus, altogether we find that
	\begin{align*}
        \left\| \partial_{t} L_{j,\xi} \right\|_{C_{t}^{0} C_{x}^{N}} &\lesssim M(t)^{1/2} \delta_{q+1}^{1/2} \ell^{-(N+1)} \left( 1 + \frac{1}{\lambda_{q+1} \ell} \right) \lesssim M(t)^{1/2} \delta_{q+1}^{1/2} \ell^{-(N+1)}.
	\end{align*}	
	The material derivatives can be treated by combining the previous estimates with the mollification estimates (\ref{Mollif_estim}):
	\begin{align}
	\nonumber \| (\partial_{t} + (v_{\ell} + z_{\ell}) \cdot \nabla ) a_{j,\xi} \|_{C_{t}^{0} C_{x}^{N}} &\lesssim  \| \partial_{t} a_{j,\xi} \|_{C_{t}^{0} C_{x}^{N}} +  \| v_{\ell} + z_{\ell} \|_{C_{t}^{0} C_{x}^{N}} \| \nabla a_{j,\xi} \|_{C_{t,x}^{0}} + \| v_{\ell} + z_{\ell} \|_{C_{t,x}^{0} } \| \nabla a_{j,\xi} \|_{C_{t}^{0} C_{x}^{N}} \\
	\nonumber &\lesssim  M(t)^{1/2} \delta_{q+1}^{1/2} \ell^{-(N+1)} \left( 1 + \| v_{q} \|_{C_{t,x}^{0}} + \| z \|_{C_{t,x}^{0}} \right) \\
	\nonumber &\lesssim M(t)^{1/2} \delta_{q+1}^{1/2} \ell^{-(N+1)}  \left( 1 + 2 M(t)^{1/2} + L^{1/4}  \right) \\
	\label{MIT_eq_a_timeCN_est_INPROOF} &\lesssim M(t) \delta_{q+1}^{1/2} \ell^{-(N+1)} ,
	\end{align}
	since $ L^{1/4} \leq M(t)^{1/2}$.  
	Similarly we find using \eqref{MIT_eq_a_timeCN_est_INPROOF}, \eqref{MIT_eq_aLCN_est} and (\ref{Mollif_estim})
	\begin{align*}
	\| (\partial_{t} + (v_{\ell} + z_{\ell}) \cdot \nabla ) L_{j,\xi} \|_{C_{t}^{0} C_{x}^{N}} &\lesssim \| \partial_{t} L_{j,\xi} \|_{C_{t}^{0} C_{x}^{N}} + \| v_{\ell} + z_{\ell} \|_{C_{t}^{0} C_{x}^{N}} \| \nabla L_{j,\xi} \|_{C_{t,x}^{0}} + \| v_{\ell} + z_{\ell} \|_{C_{t,x}^{0} } \| \nabla L_{j,\xi} \|_{C_{t}^{0} C_{x}^{N}}  \\
	&\lesssim M(t) \delta_{q+1}^{1/2} \ell^{-(N+1)},
	\end{align*}
	which completes the proof.
\end{proof}
From here the estimates (\ref{A.1}) and (\ref{A.2}) at stage $q+1$ are readily obtained as follows. (\ref{A.1}) follows from $v_{q+1}-v_q=w_{q+1} - (v_q-v_{\ell})$ and the estimates
\begin{align}\label{est_w-q+1_C0}
 ||w_{q+1}||_{C^0_{t,x}}\leq D||L_{j,\xi}||_{C^0_{t,x}} \lesssim Dc_R^{1/4}M(t)^{1/2}\delta_{q+1}^{1/2} \leq \frac{1}{2}M(t)^{1/2}\delta_{q+1}^{1/2}
\end{align}
and, using (\ref{Mollif_estim}),
\begin{align*}
||v_q-v_{\ell}||_{C^0_{t,x}} \lesssim C_LM(t)^{1/2}\delta_q^{1/2}\lambda_q\ell \ll M(t)^{1/2}\delta_{q+1}^{1/2}.
\end{align*}
Here we used (\ref{c_R-restrictions}) to absorb $D$ and the implicit absolute constants of (\ref{MIT_eq_aLCN_est}) and (\ref{length_scales_with_L-dep}). For future reference we also state the separate estimates 
\begin{align}\label{est_w-q+1-p_C0}
||w^{(p)}_{q+1}||_{C^0_{t,x}} \leq \frac{1}{4}M(t)^{1/2}\delta_{q+1}^{1/2}
\end{align}
and
\begin{align}\label{est_w-q+1-c_C0}
||w^{(c)}_{q+1}||_{C^0_{t,x}} \lesssim M(t)^{1/2}\delta^{1/2}_{q+1}\bigg(\frac{1}{\ell\lambda_{q+1}}+||D\Phi_j-\Id||_{C^0_{\supp \chi_{j},x}}\bigg),
\end{align}
for which we employed (\ref{MIT_eq_aCN_est_INPROOF}) and (\ref{c_R-restrictions}) once more.
Similarly, (\ref{A.2}) at stage $q+1$ follows from
\begin{align*}
||w_{q+1}||_{C^1_{t,x}} &\leq D\bigg(||L_{j,\xi}||_{C^1_{t,x}}+\big(\mu+\lambda_{q+1}||\Phi_j||_{C^1_{\supp \chi_{j},x}}\big)||L_{j,\xi}||_{C^0_{t,x}}\bigg)
 \\&
 \lesssim M(t)^{1/2}\delta_{q+1}^{1/2}\lambda_{q+1}\bigg(\frac{\ell^{-1}}{\lambda_{q+1}} + Dc_R^{1/4}\big(\frac{\mu}{\lambda_{q+1}}+C_L\big)\bigg)\\&
 \leq \frac 12 C_LM(t)^{1/2}\delta_{q+1}^{1/2}\lambda_{q+1}
\end{align*}
and
\begin{align*}
||v_q-v_{\ell}||_{C^1_{t,x}} \lesssim ||v_q||_{C^1_{t,x}} \lesssim C_LM(t)^{\frac{1}{2}}\delta_q^{\frac{1}{2}}\lambda_q \ll M(t)^{\frac{1}{2}}\delta_{q+1}^{\frac{1}{2}}\lambda_{q+1},
\end{align*}
where we used Lemma \ref{Lemma_est_Phi-j} and \ref{Lemma_all_estimates}, (\ref{MIT_eq_LCN_est_INPROOF}), (\ref{c_R-restrictions}), (\ref{Mollif_estim}),(\ref{estimate_v_q}) and we possibly further increased $a$ in terms of $L$ for the final inequality.

\subsubsection{Definition of \texorpdfstring{$\mathring{R}_{q+1}$}{Rq1}}\label{Def Rq+1} We continue with the definition of the new error term $\mathring{R}_{q+1}$. Since $(v_{\ell}, p_{\ell}, \mathring{R}_{\ell})$ solves (\ref{Euler-Reynolds-system_MOLLIFIED}) and the new triple $(v_{q+1},p_{q+1},\mathring{R}_{q+1})$ is supposed to solve (\ref{Euler-Reynolds-system}), we subtract (\ref{Euler-Reynolds-system_MOLLIFIED}) from (\ref{Euler-Reynolds-system}) to obtain
\begin{align}\label{calc_new_stress_and_p}
\notag \text{div}(\mathring{R}_{q+1})-\nabla p_{q+1} =&\, [\partial_t + (v_{\ell}+z_{\ell})\cdot \nabla]w_{q+1} \\&\notag 
 + \text{div}\big(w^{(p)}_{q+1}\otimes w^{(p)}_{q+1}+\mathring{R}_{\ell} \big) \\&\notag 
 + \text{div}\big(w^{(p)}_{q+1} \otimes w^{(c)}_{q+1} + w^{(c)}_{q+1} \otimes w_{q+1}\big) \\&
 + w_{q+1} \cdot \nabla(v_{\ell}+z_{\ell})\\&\notag 
 + (-\Delta)^{\alpha}w_{q+1} \\&\notag 
 + \text{div}\big(v_{q+1} \otimes (z-z_{\ell})+ (z-z_{\ell})\otimes v_{q+1} + z \otimes z - z_{\ell} \otimes z_{\ell}\big) \\&\notag 
 + \text{div}\big(R_{\text{com}}\big)-\nabla p_{\ell}.
\end{align}
We call the error terms on the right-hand side transport-, oscillation-, corrector-, Nash-, dissipation-, z- and commutator-error in their order of appearance.

 At this point, in order to define the new stress $\mathring{R}_{q+1}$, we need the following right-inverse to the $\text{div}$-operator, which we recall from \cite[Definition 4.2]{DLS13}. The operator $\mathcal{R}$ acts on vector fields $v \in C^{\infty}$ with $\int_{\mathbb{T}^3}vdx =0$ as
\begin{equation*}
(\mathcal{R}v)^{kl} = \big(\partial_k \Delta^{-1}v^{l}+\partial_l \Delta^{-1}v^k\big)-\frac{1}{2}\big(\delta_{kl}+\partial_k\partial_l\Delta^{-1}\big)\text{div}\Delta^{-1}v,
\end{equation*}
for $k,l \in \{1,2,3,\}$ and has the property that $\mathcal{R}v(x)$ is a symmetric, trace-free matrix for each $x \in \mathbb{T}^3$ and fulfills $\text{div}\big(\mathcal{R}v\big) = v$. If $v$ does not obey $\int_{\mathbb{T}^3}vdx = 0$, we write $\mathcal{R}v := \mathcal{R}\big(v-\int_{\mathbb{T}^3}vdx\big)$.

With $\mathcal{R}$ at hand, we consider the oscillation-error first. By Lemma \ref{Lem_cancellation_osc-error} and (\ref{Beltrami_aux_2}), we have
\begin{align*}
\text{div}\big(w^{(p)}_{q+1}\otimes w^{(p)}_{q+1}+\mathring{R}_{\ell} \big) &= \text{div}\big(\sum_{j,j', \xi+\xi' \neq 0}a_{j,\xi}a_{j',\xi'}\phi_{j,\xi}\phi_{j',\xi'}W_{\xi}\otimes W_{\xi'}\big) \\
&=\frac{1}{2}\sum_{j,j', \xi+\xi' \neq 0} a_{j,\xi}a_{j',\xi'}\phi_{j,\xi}\phi_{j',\xi'}\text{div}\big(W_{\xi}\otimes W_{\xi'}+W_{\xi'}\otimes W_{\xi}\big)\\&\quad+ \sum_{j,j', \xi+\xi' \neq 0}\big(W_{\xi}\otimes W_{\xi'}\big)\nabla \big(a_{j,\xi}a_{j',\xi'}\phi_{j,\xi}\phi_{j',\xi'}\big) \\
&=\frac{1}{2}\sum_{j,j', \xi+\xi' \neq 0} a_{j,\xi}a_{j',\xi'}\phi_{j,\xi}\phi_{j',\xi'}\nabla\big(W_{\xi}\cdot W_{\xi'}\big)\\&\quad+ \sum_{j,j', \xi+\xi' \neq 0}\big(W_{\xi}\otimes W_{\xi'}\big)\nabla \big(a_{j,\xi}a_{j',\xi'}\phi_{j,\xi}\phi_{j',\xi'}\big)\\
&= \text{div}(R_{\text{osc}})+\nabla p_{\text{osc}},
\end{align*}
where we set
$$R_{\text{osc}} :=\sum_{j,j', \xi+\xi' \neq 0} \mathcal{R}\bigg(\bigg(W_{\xi}\otimes W_{\xi'}-\frac{W_{\xi}\cdot W_{\xi'}}{2}\text{Id}\bigg)\nabla\big(a_{j,\xi}a_{j',\xi'}\phi_{j,\xi}\phi_{j',\xi'}\big)\bigg)$$
and
$$p_{\text{osc}} := \frac{1}{2}\sum_{j,j', \xi+\xi' \neq 0}a_{j,\xi}a_{j',\xi'}\phi_{j,\xi}\phi_{j',\xi'}\big(W_{\xi}\cdot W_{\xi'}\big).$$
Note that $R_{\text{osc}}(0)$ is deterministic and $R_{\text{osc}}$ is $(\mathcal{F}_t)_{t \geq 0}$-adapted and smooth.
Concerning the other error terms in (\ref{calc_new_stress_and_p}), we set
\begin{align*}
R_{\text{tra}} &:=  \mathcal{R}\big([\partial_t+(v_{\ell}+z_{\ell})\cdot \nabla]w_{q+1}\big),\\
R_{\text{corr}} &:= w^{(p)}_{q+1} \mathring{\otimes} w^{(c)}_{q+1} + w^{(c)}_{q+1} \mathring{\otimes} w_{q+1},\\
R_{\text{Nash}} &:= \mathcal{R}\big(w_{q+1} \cdot \nabla (v_{\ell}+z_{\ell})\big),\\
R_{\text{diss}} &:= \mathcal{R}\big((-\Delta)^{\alpha}w_{q+1}\big),\\
R_{\text{z}} &:= v_{q+1} \mathring{\otimes} (z-z_{\ell})+ (z-z_{\ell})\mathring{\otimes} v_{q+1} + z \mathring{\otimes} z - z_{\ell} \mathring{\otimes} z_{\ell}
\end{align*}
and $p_{\text{corr}} := \frac{1}{3\sum_{j,j', \xi+\xi' \neq 0}}\big(2\langle w_{q+1}^{(c)},w_{q+1}^{(p)}\rangle+|w^{(c)}_{q+1}|^2\big)$ and $p_{z}:=\frac{1}{3}\big(2\langle v_{q+1},z-z_{\ell}\rangle+|z|^2-|z_{\ell}|^2\big)$.
In view of (\ref{calc_new_stress_and_p}), now define 
$$\mathring{R}_{q+1}:= R_{\text{tra}}+R_{\text{osc}}+R_{\text{corr}}+R_{\text{Nash}}+R_{\text{diss}}+R_{\text{z}} + R_{\text{com}} $$
and
$$p_{q+1} := p_{\ell}-p_{\text{osc}}-p_{\text{corr}}-p_{z}.$$
Clearly, $\mathring{R}_{q+1}$ is trace-free. Moreover, inspecting each stress term defined above, it follows that $\mathring{R}_{q+1}(0)$ is deterministic and that $\mathring{R}_{q+1}$ is $(\mathcal{F}_t)_{t \geq 0}$-adapted. Since all terms in the definition of $\mathring{R}_{q+1}$ but $z$ are smooth and $z$ has a version in $C^0_{T_L,x}$, we have that $\mathring{R}_{q+1} \in C^0_{T_L,x}$. Moreover, by definition of $T_L$ we note that $\mathring{R}_{q+1}$ has weak first order spatial derivatives in $L^{\infty}_{T_L,x}$.

\subsubsection{Estimates for \texorpdfstring{$\mathring{R}_{q+1}$}{Rq1}}\label{testsection} We prove (\ref{A.3}) for $\mathring{R}_{q+1}$ at stage $q+1$ by considering the summands in the definition of $\mathring{R}_{q+1}$ separately. Let $t \in [0,T_L]$.
\subsubsection*{Estimates on \texorpdfstring{$R_{\mathrm{z}}$}{Rz}:} By the mollification estimates (\ref{Mollif_estim}), the definition of $T_L$, (\ref{estimate_v_q}) and (\ref{length_scale_ell-with-delta}), we obtain, choosing $a$ sufficiently large
\begin{align}\label{est_R-z}
||R_z||_{C^0_{t,x}} \leq \big(2||v_{q+1}||_{C^0_{t,x}}+||z||_{C^0_{t,x}}+||z_{\ell}||_{C^0_{t,x}}\big)\cdot ||z-z_{\ell}||_{C^0_{t,x}}
\lesssim M(t)\ell^{1/2-2\delta}
\ll M(t)\delta_{q+2}c_R. 
\end{align}

\subsubsection*{Estimates on \texorpdfstring{$R_{\mathrm{corr}}$}{Rcorr}:} By (\ref{est_w-q+1_C0}-\ref{est_w-q+1-c_C0}) and (\ref{DPhi-Id_C0_est}), we obtain
\begin{align}\label{est_R-corr}
\notag||R_{\text{corr}}||_{C^0_{t,x}} &\leq ||w_{q+1}^{(p)}||_{C^0_{t,x}}\cdot ||w_{q+1}^{(c)}||_{C^0_{t,x}}  +||w_{q+1}||_{C^0_{t,x}}\cdot ||w_{q+1}^{(c)}||_{C^0_{t,x}}\\&
\lesssim M(t)\delta_{q+1}\cdot \bigg(\frac{1}{\ell\lambda_{q+1}}+||D\Phi_j-\Id||_{C^0_{\supp \chi_{j},x}}\bigg)
 \ll M(t)\delta_{q+2}c_R,
\end{align}
where we have used (\ref{length_scales_with_L-dep}) and possibly increased $a$ in terms of $c_R$.

\subsubsection*{Estimates on \texorpdfstring{$R_{\mathrm{com}}$}{Rcom}:} By definition of $R_{\text{com}}$ and $T_L$, the mollification estimates (\ref{Mollif_estim}) and (\ref{estimate_v_q}), we have
\begin{align}\label{est_R-comm}
\notag||R_{\text{com}}||_{C^0_{t,x}} &\lesssim \ell ||v_q+z||_{C^0_{t,x}} \big(||v_q||_{C^1_{t,x}}+||z||_{L^{\infty}_tW^{1,\infty}_x}\big)+\ell^{1/2-2\delta}||v_q+z||_{C^0_{t,x}}\big(||v_q||_{C^1_{t,x}}+||z||_{C_t^{1/2-2\delta}L^{\infty}_x}\big) \\&\notag \lesssim
\ell^{1/2-2\delta}\big(M(t)^{1/2}+L^{1/4}\big)\big(C_LM(t)^{1/2}\delta_q^{1/2}\lambda_q+L^{1/2}\big)\\& \notag
\lesssim \ell^{1/2-2\delta}C_LM(t)\delta_q^{1/2}\lambda_q\\&
\ll M(t)\delta_{q+2}c_R,
\end{align}
where we used (\ref{length_scale_ell-with-delta}) for the final inequality.

The remaining error terms are estimated with the help of Proposition \ref{lemma_stationary_phase}. For the rest of this section we always choose $N = N_0$, where $N_0$ is as in Section \ref{Sect_parameter_choices}.

\subsubsection*{Estimates on \texorpdfstring{$R_{\mathrm{tra}} + R_{\mathrm{Nash}}$}{RtraRNash}:} Setting $D_t := \partial_{t} + (v_{\ell} + z_{\ell}) \cdot \nabla$, and using that the phase $\Phi_j$ is transported along $v_{\ell}+z_{\ell}$, we write 
\begin{align*} 
    (\partial_t+(v_{\ell}+z_{\ell})\cdot \nabla)w_{q+1} + w_{q+1} \cdot \nabla (v_{\ell}+z_{\ell}) = \sum_{j,\xi} \left[  D_t L_{j,\xi} + L_{j,\xi} \cdot \nabla (v_{\ell}+z_{\ell}) \right] \phi_{j,\xi} e^{i \lambda_{q+1}\xi \cdot x} =: \sum_{j,\xi} \Omega_{j,\xi} e^{i \lambda_{q+1}\xi \cdot x}.
\end{align*}
We then employ \eqref{MIT_eq_aLCN_est}, \eqref{MIT_eq_aL_materialCN_est}, (\ref{mu_ll_ellinv}) and (\ref{length_scales_with_L-dep}) to estimate
\begin{align*}
    \| \Omega_{j,\xi} \|_{C_{t,x}^{0}} &\leq \| D_t L_{j,\xi} \|_{C_{t,x}^{0}} + \| L_{j,\xi} \|_{C_{t,x}^{0}} \| v_{\ell}+z_{\ell} \|_{C_{t}^{0} C_{x}^{1}} \lesssim  M(t)^{1/2}  \delta_{q+1}^{1/2} \left( M(t)^{1/2} \ell^{-1} + \| v_{\ell} + z_{\ell} \|_{C_{t}^{0} C_{x}^{1} } \right) \\
    & \lesssim  M(t)  \delta_{q+1}^{1/2} \ell^{-1} \lesssim  M(t)  \delta_{q+1}^{1/2} \lambda_{q+1}^{1-\beta}.
\end{align*}
Similarly, we get, taking into account also derivatives of the phase function $\phi_{j,\xi}$ and using \eqref{MIT_eq_phiCN_est}:
\begin{align*}
    \| \Omega_{j,\xi} \|_{C_{t}^{0} C_{x}^{N}} &\lesssim  \| D_t L_{j,\xi} \|_{C_{t}^{0} C_{x}^{N}} + \| L_{j,\xi} \|_{C_{t}^{0} C_{x}^{N}} \| v_{\ell}+z_{\ell} \|_{C_{t}^{0} C_{x}^{1}} + \| L_{j,\xi} \|_{C_{t,x}^{0}} \| v_{\ell}+z_{\ell} \|_{C_{t}^{0} C_{x}^{N+1}} \\
    &\quad + \left( \| D_t L_{j,\xi} \|_{C_{t,x}^{0}} + \| L_{j,\xi} \|_{C_{t,x}^{0}} \| v_{\ell}+z_{\ell} \|_{C_{t}^{0} C_{x}^{1}} \right) \| \phi_{j,\xi} \|_{C_{\supp \chi_{j}}^{0} C_{x}^{N}} \\
    &\lesssim  M(t)^{1/2} \delta_{q+1}^{1/2} \ell^{-N} \left( M(t)^{1/2} \ell^{-1} + \| v_{\ell} + z_{\ell} \|_{C_{t}^{0} C_{x}^{1} } \right) +  M(t)  \delta_{q+1}^{1/2} \ell^{-1} \lambda_{q+1}^{N(1-\beta)} \\
    &\lesssim M(t)  \delta_{q+1}^{1/2} \left( \ell^{-(N+1)} + \ell^{-1} \lambda_{q+1}^{N(1-\beta)} \right) \lesssim  M(t)  \delta_{q+1}^{1/2} \lambda_{q+1}^{(N+1)(1-\beta)}.
\end{align*}
With these preparations, an application of the stationary phase lemma Proposition \ref{lemma_stationary_phase} yields:
\begin{align}\label{est_R-trash}
   \notag \| R_{\text{tra}} + R_{\text{Nash}} \|_{C_{t}^{0} C_{x}^{0}} &= \Big\| \sum_{j,\xi} \mathcal{R} \left( \Omega_{j,\xi} e^{i \lambda_{q+1}\xi \cdot x} \right) \Big\|_{C_{t,x}^{0}} \leq \Big\| \sum_{j,\xi} \mathcal{R} \left( \Omega_{j,\xi} e^{i \lambda_{q+1}\xi \cdot x} \right) \Big\|_{C_{t}^{0} C_{x}^{\varepsilon}} \\ \notag
    &\lesssim \sum_{j,\xi} \frac{\| \Omega_{j,\xi} \|_{C_{t,x}^{0}} }{\lambda_{q+1}^{1-\varepsilon}} + \frac{\| \Omega_{j,\xi} \|_{C_{t}^{0} C_{x}^{N_{0}}}}{\lambda_{q+1}^{N_{0} - \varepsilon}} + \frac{\| \Omega_{j,\xi} \|_{C_{t}^{0} C_{x}^{N_{0}+\varepsilon}}}{\lambda_{q+1}^{N_{0} }} \\ \notag
    &\lesssim  M(t)  \delta_{q+1}^{1/2} \left( \lambda_{q+1}^{\varepsilon - \beta} +  \lambda_{q+1}^{(N_{0}+1)(1-\beta) - N_{0} + \varepsilon} + \lambda_{q+1}^{ (N_{0}+1+\varepsilon)(1-\beta) - N_{0}}\right) \\
    &\leq M(t) \delta_{q+1}^{1/2} \left( \lambda_{q+1}^{\varepsilon - \beta} +  2 \lambda_{q+1}^{1 - \beta N_{0} - \beta + \varepsilon} \right) \ll M(t) c_{R} \delta_{q+2},
\end{align}
which translates via the definition of the parameters into the conditions on $b,c$:
\begin{align*}
    (\varepsilon - \beta) b^{2} c - \frac{1}{2}b + b^{2} = ((\varepsilon - \beta)c + 1)b^{2} - \frac{1}{2} b&<0, \\
    (1- \beta N_{0} + \varepsilon)b^{2} c - \beta b^{2} c - \frac{1}{2}b + b^{2} = b^{2} \left[ (1- \beta N_{0} + \varepsilon)c - \beta c + 1 \right] - \frac{1}{2}b &< 0,
\end{align*}
which are fulfilled by the choice of $\varepsilon$, $c$ and $N_0$ from Section \ref{Sect_parameter_choices}.

\subsubsection*{Estimates on \texorpdfstring{$R_{\mathrm{osc}}$}{Rosc}:} We set $f_{j,\xi,j',\xi'} := \nabla \big(a_{j,\xi}a_{j',\xi'}\phi_{j,\xi}\phi_{j',\xi'}\big) $. Then we have by \eqref{MIT_eq_aLCN_est}, \eqref{MIT_eq_phiCN_est} for $N \geq 0$
\begin{align*}
    \| f_{j,\xi,j',\xi'} \|_{C_{t}^{0} C_{x}^{N}} &\lesssim \| a_{j,\xi} \|_{C_{t}^{0} C_{x}^{N+1}} \| a_{j',\xi'} \|_{C_{t,x}^{0}} + \| a_{j,\xi} \|_{C_{t,x}^{0}} \| a_{j',\xi'} \|_{C_{t}^{0} C_{x}^{N+1}} \\
    &\quad + \| a_{j,\xi} \|_{C_{t,x}^{0}} \| a_{j',\xi'} \|_{C_{t,x}^{0}} \left( \| \phi_{j,\xi} \|_{C_{\supp \chi_{j}}^{0} C_{x}^{N+1}} + \| \phi_{j',\xi'} \|_{C_{\supp \chi_{j}}^{0} C_{x}^{N+1}}  \right) \\
    &\lesssim M(t) \delta_{q+1} \lambda_{q+1}^{(1-\beta)(N+1)}.
\end{align*}
The stationary phase lemma then yields
\begin{align}\label{est_R-osc}
    \notag\| R_{\text{osc}} \|_{C_{t,x}^{0}} &\lesssim \sum_{j,j', \xi+\xi' \neq 0} \frac{\| f_{j,\xi,j',\xi'} \|_{C_{t,x}^{0}} }{\lambda_{q+1}^{1-\varepsilon}} + \frac{\| f_{j,\xi,j',\xi'} \|_{C_{t}^{0} C_{x}^{N_{0}}}}{\lambda_{q+1}^{N_{0} - \varepsilon}} + \frac{\| f_{j,\xi,j',\xi'} \|_{C_{t}^{0} C_{x}^{N_{0}+\varepsilon}}}{\lambda_{q+1}^{N_{0}}} \\
    &\lesssim M(t) \delta_{q+1} \left( \lambda_{q+1}^{\varepsilon - \beta} + \lambda_{q+1}^{1-\beta N_{0} - \beta + \varepsilon} \right) \ll c_{R} M(t) \delta_{q+2},
\end{align}
by the same argument and assumptions as for the previous stress terms.

\subsubsection*{Estimates on \texorpdfstring{$R_{\mathrm{diss}}$}{Rdiss}:} Following the argument of \cite{CDLDR18}, we use the commutativity of $(-\D)^{\alpha}$ and $\mathcal{R}$, apply Schauder estimates (cf. \cite[Theorem 1.4]{RS16}, \cite[Theorem B.1]{CDLDR18}) and interpolation to estimate
\begin{align*}
    \| R_{\text{diss}} \|_{C_{t,x}^{0}} = \| (-\D)^{\alpha} \mathcal{R} w_{q+1} \|_{C_{t,x}^{0}} \lesssim C [\mathcal{R} w_{q+1} ]_{C_{t}^{0} C_{x}^{2\alpha + \varepsilon}} \lesssim \| \mathcal{R} w_{q+1} \|_{C_{t,x}^{0}}^{1-2 \alpha - \varepsilon} \| \mathcal{R} w_{q+1} \|_{C_{t}^{0} C_{x}^{1}}^{2 \alpha + \varepsilon}.
\end{align*}
We have by definition $w_{q+1} = \sum_{j,\xi} L_{j,\xi} \phi_{j,\xi} e^{i \lambda_{q+1} \xi \cdot x} =: \sum_{j,\xi} O_{j,\xi}  e^{i \lambda_{q+1} \xi \cdot x} $. By the product rule, \eqref{MIT_eq_aLCN_est} and \eqref{MIT_eq_phiCN_est}, we have
\begin{align*}
    \| O_{j,\xi} \|_{C_{t}^{0} C_{x}^{N}} \lesssim \| L_{j,\xi} \|_{C_{t}^{0} C_{x}^{N}} + \| L_{j,\xi} \|_{C_{t,x}^{0}} \| \phi_{j,\xi} \|_{C_{\supp \chi_{j}}^{0} C_{x}^{N}} \lesssim M(t)^{1/2} \delta_{q+1}^{1/2} \lambda_{q+1}^{(1-\beta)N}.
\end{align*}
By the stationary phase lemma, we find
\begin{align*}
    \| \mathcal{R} w_{q+1} \|_{C_{t,x}^{0}} &\lesssim \sum_{j,\xi} \frac{\| O_{j,\xi} \|_{C_{t,x}^{0}} }{\lambda_{q+1}^{1-\varepsilon}} + \frac{\| O_{j,\xi} \|_{C_{t}^{0} C_{x}^{N_{0}}}}{\lambda_{q+1}^{N_{0} - \varepsilon}} + \frac{\| O_{j,\xi} \|_{C_{t}^{0} C_{x}^{N_{0}+\varepsilon}}}{\lambda_{q+1}^{N_{0}}} \\
    &\lesssim M(t)^{1/2} \delta_{q+1}^{1/2} \left( \lambda_{q+1}^{\varepsilon - 1} + \lambda_{q+1}^{- \beta N_{0} + \varepsilon} \right) \lesssim M(t)^{1/2} \delta_{q+1}^{1/2} \lambda_{q+1}^{\varepsilon - 1}, 
\end{align*}
since $\varepsilon < 1$, for $a$ sufficiently large. Similarly, we can estimate 
\begin{align*}
    [ \mathcal{R} w_{q+1} ]_{C_{t}^{0} C_{x}^{1}} = \| \mathcal{R} D w_{q+1} \|_{C_{t,x}^{0}} &\lesssim \sum_{j,\xi} \frac{\| D O_{j,\xi} \|_{C_{t,x}^{0}} }{\lambda_{q+1}^{1-\varepsilon}} + \frac{\| D O_{j,\xi} \|_{C_{t}^{0} C_{x}^{N_{0}}}}{\lambda_{q+1}^{N_{0} - \varepsilon}} + \frac{\| D O_{j,\xi} \|_{C_{t}^{0} C_{x}^{N_{0}+\varepsilon}}}{\lambda_{q+1}^{N_{0}}} \\
    &\lesssim M(t)^{1/2} \delta_{q+1}^{1/2} \lambda_{q+1}^{\varepsilon} \left( \lambda_{q+1}^{- \beta} + \lambda_{q+1}^{1 - \beta N_{0} - \beta } \right) \lesssim M(t)^{1/2} \delta_{q+1}^{1/2} \lambda_{q+1}^{\varepsilon} 
\end{align*}
by the same arguments and assumptions as for $R_{\text{tra}} + R_{\text{Nash}}$.
Both estimates put together imply 
\begin{align}\label{est_R-diss}
    \| R_{\text{diss}} \|_{C_{t,x}^{0}} \lesssim \| \mathcal{R} w_{q+1} \|_{C_{t,x}^{0}}^{1-2 \alpha - \varepsilon} \| \mathcal{R} w_{q+1} \|_{C_{t}^{0} C_{x}^{1}}^{2 \alpha + \varepsilon} \lesssim M(t)^{1/2} \delta_{q+1}^{1/2} \lambda_{q+1}^{2 \alpha + 2\varepsilon - 1} \ll c_{R} M(t) \delta_{q+2},
\end{align}
if we have the condition 
\begin{align*}
    (2 \alpha + 2\varepsilon -1)c + 1 < 0,
\end{align*}
which holds by our choice of $\varepsilon < \frac{1}{2} - \alpha$ and $c > \frac{1}{1/2 - \alpha}$ from Section \ref{Sect_parameter_choices} and upon possibly increasing $a$ in terms of $\alpha, b$ and $c$.
\\

Finally, combining (\ref{est_R-z}), (\ref{est_R-corr}), (\ref{est_R-comm}), (\ref{est_R-trash}), (\ref{est_R-osc}) and (\ref{est_R-diss}), we obtain (\ref{A.3}) at stage $q+1$, which completes the verification of the inductive estimates (\ref{A.1})-(\ref{A.3}). Consequently, Proposition \ref{Prop_Main_Iteration} follows, which concludes the proof of Theorem \ref{Theorem_analySol}.

\section{Proof of the main result}
Having constructed the analytically weak solution $u$ of Theorem \ref{Theorem_analySol}, we proceed to the proof of our main result Theorem \ref{Main_Result}. Recall that $u = v+z$ is defined on a prescribed probability space $(\Omega,\mathcal{F},(\mathcal{F}_t)_t,\mathbb{\mathbf{P}},B)$, where $B$ is a $GG^*$-Wiener process and $(\mathcal{F}_t)_{t \geq 0}$ is the corresponding normal filtration. In particular, $(\mathcal{F}_t)_{t \geq 0}$ is right-continuous. For each $T$ and $K$ as in Theorem \ref{Theorem_analySol}, $u = u(T,K,L)$ is defined up to the $(\mathcal{F}_t)_{t \geq 0}$-stopping time $T_L$ defined in \eqref{def_stoppTime_T_L}, with $L>1$ sufficiently large as in the proof of Theorem \ref{Theorem_analySol} and with $\sigma >0$ as in Theorem \ref{Main_Result}. We recall that we have $T_L >0$ $\mathbb{\mathbf{P}}$-a.s. due to Proposition \ref{Prop_Reg_z_maintext}. Since $u \in C([0,T_L],H^{\gamma})$ for some $\gamma \in (0,1)$ as in Theorem \ref{Theorem_analySol}, we have that in particular $u_L:=u(\cdot \wedge T_L) \in \Omega_0$. Our goal is to prove that 
\begin{equation}\label{def_P}
P:= \mathbb{\mathbf{P}}\circ u_L^{-1}
\end{equation} 
is a martingale solution to \eqref{HSNSE_Intro} up to a suitable stopping time $\tau_L$. Let us construct $\tau_L$ first.

Similarly to \cite{HZZ_stoNSE}, on $\Omega_0 \cap L^{\infty}_{\mathrm{loc}}(\mathbb{R}_+,L^2_{\sigma})$ we consider the following processes with paths in $C(\mathbb{R}_+,H^{-3})$
$$M^{x}_{t,0}:= x(t)-x(0)+\int_0^tF_{\alpha}(x(r))dr,\quad x \in \Omega_0$$
and  
$$Z^x(t):= M^x_{t,0}+\int_0^t \mathbb{P}(-\Delta)^{\alpha}e^{(t-r)(-\Delta)^{\alpha}}M^x_{r,0}dr.$$
Then, set 
\begin{equation}\label{def_tauL}
\tau_L := \underset{n \to \infty}{\text{lim}}\tau^n_L,
\end{equation}
where the $\tau^n_L$ are nondecreasing in $n$ and defined as
$$\tau^n_L(x) := \text{inf}\bigg\{t \geq 0: ||Z^x(t)||_{H^{\frac{5+\sigma}{2}}} > (L-\frac{1}{n})^{\frac{1}{4}}C_S^{-1}\bigg\}\wedge \text{inf}\bigg\{t \geq 0: ||Z^x(t)||_{C_t^{\frac{1}{2}-{2\delta}}H^{\frac{3+\sigma}{2}}} > (L-\frac{1}{n})^{\frac{1}{2}}C_S^{-1}\bigg\}\wedge L,$$with $C_S$ as in \eqref{def_stoppTime_T_L}. Each $\tau^n_L$ is a $(\mathcal{B}_t)_{t \geq 0}$-stopping time, since $H_1=H^{\frac{5+\sigma}{2}}, H^{\frac{3+\sigma}{2}}$ and $H_2 = H^{-3}$ fulfill the assumptions of the following lemma, which is proved in  \cite[Lemma 3.5]{HZZ_stoNSE}.

\begin{lemma}\label{LemmaStoppTime}
	Let $(\Omega, \mathcal{F}, (\mathcal{F}_t)_{t \geq 0})$ be a filtered measurable space. Let $H_1,H_2$ be separable Hilbert spaces such that the embedding $H_1 \subseteq H_2$ is continuous. Suppose that there exists $\{h_k\}_{k \geq 1} \subseteq H_2^* \subseteq H_1^*$ such that for $f \in H_1$
	$$||f||_{H_1} = \underset{k \in \mathbb{N}}{\sup}\,h_k(f).$$
	Suppose that $X$ is an $(\mathcal{F}_t)_{t \geq 0}$-adapted process on $\Omega$ with trajectories in $C(\mathbb{R}_+,H_2)$. Then, for $L>1$ and $\alpha \in (0,1)$, both
	$$\tau_1 := \inf\{t \geq 0: ||X(t)||_{H_1} > L\} \quad\text{ and }\quad \tau_2 := \inf\{t \geq 0: ||X(t)||_{C_t^{\alpha}H_1} > L\}$$
	are $(\mathcal{F}_t^+)_{t \geq 0}$-stopping times, where $(\mathcal{F}_t^+)_{t \geq 0}$ denotes the right-continuous filtration of $(\mathcal{F}_t)_{t \geq 0}$.
\end{lemma}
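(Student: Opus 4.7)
The plan is to verify that $\{\tau_i < t\} \in \mathcal{F}_t$ for every $t > 0$, which is enough since it yields $\{\tau_i \leq t\} = \bigcap_{n \geq 1}\{\tau_i < t + 1/n\} \in \mathcal{F}_t^+$. The central technical step is to realise the $H_1$-norm as a countable supremum of functionals that remain continuous on the larger space $H_2$. Concretely I would define, for $f \in H_2$,
\[
N(f) := \sup_{k \geq 1} h_k(f) \in [0, +\infty].
\]
Since each $h_k \in H_2^{*}$, the functional $N : H_2 \to [0,+\infty]$ is lower semicontinuous on $H_2$, and by hypothesis it agrees with $\|\cdot\|_{H_1}$ on $H_1$. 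Composition with the $H_2$-continuous trajectory $s \mapsto X(s)$ then makes $s \mapsto N(X(s))$ lower semicontinuous on $[0,\infty)$, so its strict superlevel sets are open.

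For $\tau_1$, I would then decompose
\[
\{\tau_1 < t\} = \bigcup_{s \in \mathbb{Q} \cap [0,t)} \bigcup_{k \geq 1}\{h_k(X(s)) > L\},
\]
using openness of the superlevel set to replace the supremum over $s \in [0,t)$ by a supremum over rationals, and the definition of $N$ to eliminate the supremum in $k$. Each event $\{h_k(X(s)) > L\}$ lies in $\mathcal{F}_s \subseteq \mathcal{F}_t$ by adaptedness of $X$ and continuity of $h_k$ on $H_2$, hence $\{\tau_1 < t\} \in \mathcal{F}_t$. For $\tau_2$, the sup-in-time part of the $C_t^\alpha$-norm is handled identically, and the Hölder seminorm by the same scheme applied to pairs: joint lower semicontinuity of $(s_1,s_2) \mapsto N(X(s_2) - X(s_1))$ on the open half-space $\{s_1 < s_2\}$ yields
\[
\{\tau_2 < t\} = \{\tau_1 < t\} \cup \bigcup_{\substack{s_1, s_2 \in \mathbb{Q}\\ 0 \leq s_1 < s_2 < t}}\bigcup_{k \geq 1} \bigl\{ h_k(X(s_2) - X(s_1)) > L\, |s_2 - s_1|^\alpha \bigr\} \in \mathcal{F}_t.
\]

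The main obstacle, and the whole reason the lemma is nontrivial, is that $X(s)$ is only known to lie in $H_2$, so $\|X(s)\|_{H_1}$ (with the convention $+\infty$ off $H_1$) must first be realised as a countable supremum of $H_2$-continuous functionals before $\mathcal{F}_s$-measurability and the rational approximation in $s$ become available. The hypothesis on $\{h_k\}$ delivers precisely both ingredients at once: a countable family computing $\|\cdot\|_{H_1}$, whose members are continuous on the weaker space $H_2$ in which the trajectories of $X$ are known to vary continuously.
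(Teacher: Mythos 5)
Your overall route is the standard one and, for $\tau_1$, it is correct: write $\{\tau_i\leq t\}=\bigcap_{n}\{\tau_i<t+1/n\}$, realise $\|\cdot\|_{H_1}$ through the countable family $\{h_k\}\subseteq H_2^*$, use lower semicontinuity of $s\mapsto\sup_k h_k(X(s))$ along the $H_2$-continuous trajectories to reduce to rational times, and invoke adaptedness of $X$ together with $H_2$-continuity of each $h_k$. This is essentially the argument of the proof the paper refers to (\cite[Lemma 3.5]{HZZ_stoNSE}); note only that, as there, one tacitly reads $\|X(s)\|_{H_1}$ as $\sup_k h_k(X(s))\in[0,\infty]$ for trajectories that a priori take values only in $H_2$, since the hypothesis identifies the two quantities only on $H_1$.

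The $\tau_2$ part, however, contains a genuine error: the displayed identity for $\{\tau_2<t\}$ is false under the paper's convention that the H\"older norm is the \emph{sum} of the sup-norm and the seminorm, $\|X\|_{C^\alpha_s H_1}=\sup_{r\in[0,s]}\|X(r)\|_{H_1}+[X]_{C^\alpha_s H_1}$. The sum can exceed $L$ while neither part does (say both parts equal $\tfrac{7}{10}L$), in which case $\tau_2<t$ but none of the events $\{h_k(X(s))>L\}$ or $\{h_k(X(s_2)-X(s_1))>L|s_2-s_1|^\alpha\}$ occurs; your union is therefore a strict subset of $\{\tau_2<t\}$ in general, and the two thresholds cannot be tested against $L$ separately. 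The repair needs no new ideas: for each fixed rational $s<t$ the full norm is $\mathcal{F}_s$-measurable, since lower semicontinuity lets you write
\begin{equation*}
\|X\|_{C^\alpha_s H_1}=\sup_{r\in\mathbb{Q}\cap[0,s]}\sup_{k}h_k(X(r))+\sup_{\substack{r_1\neq r_2\in\mathbb{Q}\cap[0,s]}}\sup_k\frac{h_k\big(X(r_2)-X(r_1)\big)}{|r_2-r_1|^\alpha},
\end{equation*}
a countable supremum of $\mathcal{F}_s$-measurable random variables in each summand; and since $s\mapsto\|X\|_{C^\alpha_s H_1}$ is nondecreasing, $\{\tau_2<t\}=\bigcup_{s\in\mathbb{Q}\cap[0,t)}\{\|X\|_{C^\alpha_s H_1}>L\}\in\mathcal{F}_t$, after which the right-limit argument concludes as for $\tau_1$. (Equivalently, keep your two families of events but take a union over pairs of rational thresholds $q_1,q_2\geq0$ with $q_1+q_2>L$.)
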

Consequently $\tau_L$ is a bounded $(\mathcal{B}_t)_{t \geq 0}$-stopping time on $\Omega_0 \cap L^{\infty}_{\mathrm{loc}}(\mathbb{R}_+,L^2_{\sigma})$. While we might have $\tau_L(x) =0$ for irregular paths $x$, the regularity of $z$ and the fact that $u$ is an analytically weak solution to \eqref{HSNSE_Intro} allow to obtain $P(\tau_L >T) > \frac{1}{2}$ for any $T>0$ for sufficiently large $L$, see the following lemma. Note that $\mathbb{\mathbf{P}}\circ u_L^{-1}$ is in particular concentrated on $\Omega_0 \cap L^{\infty}_{\mathrm{loc}}(\mathbb{R}_+,L^2_{\sigma})$.
\begin{lemma}\label{Lem eq stopp times}
For $\mathbb{\mathbf{P}}$-a.e. $\omega \in \Omega$, we have
\begin{equation}\label{StoppingTimesEqual}
\tau_L(u_L(\omega)) = T_L(\omega).
\end{equation}
In particular, we have $\tau_L >0$ $P$-a.s. and for any $T>0$ there is $L_0 = L_0(T) >1$ such that for any $L \geq L_0$ it holds
\begin{equation}\label{eq_P_tau_L_>1/2}
P(\tau_L \geq T) > \frac{1}{2}.
\end{equation}
\end{lemma}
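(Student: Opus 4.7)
The plan is to prove the pathwise identity \eqref{StoppingTimesEqual} by identifying, along $u_L$, the path-functional $Z^{u_L}$ with the stochastic convolution $z$ on the random interval $[0,T_L]$. Once this is in place, $\tau_L^n$ and $T_L$ compare the norms of the \emph{same} function against the thresholds $(L-\tfrac{1}{n})^{1/4}C_S^{-1}$ and $L^{1/4}C_S^{-1}$ (and their H\"older analogues), so equality of the stopping times reduces to a continuity/level-set argument; the two remaining assertions then fall out of the regularity of $z$.

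For the identification I would use that $u=v+z$ is an $(\mathcal{F}_t)$-adapted analytically weak solution of \eqref{HSNSE_Intro} on $[0,T_L]$. Testing against elements of $H^3$ and interpreting $F_\alpha$ in $H^{-3}$ as in Section~2 gives $M^{u_L}_{t,0}(\omega)=B_t(\omega)$ for every $t \leq T_L(\omega)$, $\mathbf{P}$-a.s. Substituting this into the definition of $Z^x$ and performing an integration by parts on the semigroup convolution (pathwise in $\omega$, applied to the Wiener trajectory) recovers exactly the mild formulation $\int_0^t e^{-(t-s)(-\Delta)^\alpha}\mathbb{P}\,dB_s$ of \eqref{H_lin_st}; hence $Z^{u_L}(t)=z(t)$ for all $t \leq T_L$, $\mathbf{P}$-a.s.

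With this identification I would compare $\tau_L^n(u_L)$ and $T_L$ in both directions. For any $t < T_L(\omega)$, continuity of $z$ in $H^{(5+\sigma)/2}$ and $C_s^{1/2-2\delta}H^{(3+\sigma)/2}$ (Proposition \ref{Prop_Reg_z_maintext}), together with the definition of $T_L$, forces both norms of $Z^{u_L}=z$ to stay strictly below the $L$-thresholds on $[0,t]$; since $(L-\tfrac{1}{n})^{1/4} \nearrow L^{1/4}$, the same inequalities persist with the smaller $n$-thresholds for $n$ large, so $\tau_L^n(u_L(\omega)) > t$, and sending $n \to \infty$ and then $t \nearrow T_L(\omega)$ yields $\tau_L(u_L(\omega)) \geq T_L(\omega)$. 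Conversely, when $T_L(\omega) < L$ the infimum defining $T_L$ is attained and a continuity argument produces $T_n \nearrow T_L(\omega)$ with $\tau_L^n(u_L(\omega)) \leq T_n$, while in the case $T_L(\omega)=L$ the bound $\tau_L^n(u_L(\omega)) \leq L$ is immediate from the $\wedge L$ in the definition of $\tau_L^n$. Combining both bounds yields \eqref{StoppingTimesEqual}.

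The remaining assertions are then immediate. Proposition \ref{Prop_Reg_z_maintext} together with $z(0)=0$ gives $\mathbf{P}(T_L > 0) = 1$, whence $\tau_L > 0$ $P$-a.s.\ via \eqref{StoppingTimesEqual} and $P = \mathbf{P}\circ u_L^{-1}$; and since the same proposition implies $T_L \nearrow +\infty$ $\mathbf{P}$-a.s.\ as $L \to \infty$, choosing $L_0 = L_0(T)$ with $\mathbf{P}(T_L \geq T) > \tfrac{1}{2}$ for all $L \geq L_0$ yields $P(\tau_L \geq T) = \mathbf{P}(T_L \geq T) > \tfrac{1}{2}$. The only substantive step is the identification $Z^{u_L}=z$ on $[0,T_L]$, which blends the analytically weak formulation in $H^{-3}$ with a deterministic semigroup integration-by-parts on the Wiener trajectory; everything else is a standard continuity argument once this identification is secured.
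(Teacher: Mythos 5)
Your argument is correct and takes essentially the same route as the paper: establish $M^{u}_{t,0}=B_t$ on $[0,T_L]$ from the weak formulation, deduce $Z^{u}=z$ there via the semigroup integration by parts and uniqueness for \eqref{H_lin_st}, compare $\tau_L(u_L)$ with $T_L$ through the approximating times $\tau_L^n$ (the step the paper delegates to \cite[Proposition 3.7]{HZZ_stoNSE}), and then conclude $\tau_L>0$ $P$-a.s.\ and \eqref{eq_P_tau_L_>1/2} from $T_L>0$ and $T_L\nearrow\infty$. One cosmetic point: in the direction $\tau_L(u_L)\le T_L$ the infimum defining $T_L$ need not be attained for the H\"older-norm component (that quantity is nondecreasing but not necessarily right-continuous in $t$), yet this is harmless, since by definition of the infimum there are times $t_k\downarrow T_L$ at which the relevant norm is $\ge$ the $L$-threshold and hence strictly exceeds the $(L-\tfrac{1}{n})$-threshold, giving $\tau_L^n(u_L)\le t_k\to T_L$.
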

\begin{proof}
We start by proving $Z^{u_L} = z$ $\mathbb{\mathbf{P}}$-a.s., which is equivalent to 
\begin{equation}\label{Z_equal_z}
Z^u = z \quad \text{ on }[0,T_L]\,\, \mathbb{\mathbf{P}}-\text{a.s.}
\end{equation}
Since $u$ is an analytically weak solution to \eqref{HSNSE_Intro} on $[0,T_L]$, we have for each $e \in H^3$
$$\langle M^{u}_{t,0},e \rangle_{(-3,3)} = \langle u(t)-u(0),e\rangle_{L^2}+\int_0^t\langle F_{\alpha}(u(s)), e \rangle_{(-3,3)}ds = \langle B_t,e \rangle_{(-3,3)},\quad t \in [0,T_L]\,\, \mathbb{\mathbf{P}}-\text{a.s.}$$
Therefore we have $M^{u}_{t,0} = B_t$ for all $t \in [0,T_L]$ $\mathbf{P}$-a.s.
Hence, by definition of $Z$ and integration by parts, $Z^u$ solves \eqref{H_lin_st}, which implies \eqref{Z_equal_z}. From here, \eqref{StoppingTimesEqual} readily follows as in  \cite[Proposition 3.7.]{HZZ_stoNSE}.
The regularity \eqref{Reg_z} of $z$ and the definition of $T_L$ imply $T_L\nearrow +\infty$ $\mathbb{\mathbf{P}}$-a.s. as $L \rightarrow +\infty$. Hence, the $\sigma$-continuity of $\mathbf{P}$ yields \eqref{eq_P_tau_L_>1/2}. 
\end{proof}
We proceed by showing that the probability measure $P$ on $\Omega_{0,\tau}$ is a martingale solution to \eqref{HSNSE_Intro} on $[0,\tau_L]$. We point out that for the proof of Proposition \ref{Prop_P_is_locMGsol} it is essential that the analytically weak solution $u$ is probabilistically strong, i.e. it is $(\mathcal{F}_t)_{t \geq 0}$-adapted. 
\begin{proposition}\label{Prop_P_is_locMGsol}
	The probability measure $P \in \mathcal{P}(\Omega_{0,\tau_L})$ is a martingale solution to \eqref{HSNSE_Intro} on $[0,\tau_L]$ in the sense of Definition \ref{DefMgSol_stop} for some initial condition $(0,x_0)$, $x_0 \in L^2_{\sigma}$. 
\end{proposition}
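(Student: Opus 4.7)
The plan is to set $x_{0} := u(0) = v(0) \in L^{2}_{\sigma}$ (deterministic by Theorem~\ref{Theorem_analySol}) and verify conditions (M1)--(M3) of Definition~\ref{DefMgSol_stop} for $P = \mathbf{P}\circ u_{L}^{-1}$ with stopping time $\tau_{L}$ of \eqref{def_tauL}. Since $u_{L}$ is supported on paths stopped at $T_{L}$ and $\tau_{L}(u_{L}) = T_{L}$ $\mathbf{P}$-a.s.\ by Lemma~\ref{Lem eq stopp times}, the measure $P$ is indeed concentrated on $\Omega_{0,\tau_{L}}$. Condition (M1) is immediate because $u_{L}(0) = u(0) = x_{0}$ deterministically.

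For (M2), the central ingredient is the pathwise identity
\[
M^{e}_{0}\!\left(t \wedge \tau_{L}(u_{L}(\omega)), u_{L}(\omega)\right) = \langle B_{t \wedge T_{L}(\omega)}, e\rangle_{(-3,3)}, \qquad e \in H^{3},
\]
valid for $\mathbf{P}$-a.e.\ $\omega$. It combines $\tau_{L}(u_{L}) = T_{L}$ with the stopped-path relation $u_{L}(r) = u(r \wedge T_{L})$ and the analytically weak formulation of \eqref{HSNSE_Intro}, as already computed in the proof of Lemma~\ref{Lem eq stopp times}. Since $B$ is a $GG^{*}$-Wiener process relative to the normal filtration $(\mathcal{F}_{t})$ and $T_{L}$ is an $(\mathcal{F}_{t})$-stopping time, optional stopping gives that $\langle B_{\cdot \wedge T_{L}}, e\rangle$ is a continuous square-integrable $(\mathcal{F}_{t})$-martingale with quadratic variation $(t \wedge T_{L})\|G^{*}e\|_{U}^{2}$, and likewise for the compensated square $\langle B_{\cdot \wedge T_{L}}, e\rangle^{2} - (\cdot \wedge T_{L})\|G^{*}e\|_{U}^{2}$.

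The main technical step, and the place where the probabilistically strong nature of $u$ from Theorem~\ref{Theorem_analySol} is essential, is to transfer these martingale properties to the right-continuous canonical filtration $(\mathcal{B}_{t})$ under $P$. I would verify $u_{L}^{-1}(\mathcal{B}_{t}) \subseteq \mathcal{F}_{t}$ as follows: for $r \leq s$ and Borel $A \subseteq H^{-3}$, the set $u_{L}^{-1}(\{\pi_{r} \in A\}) = \{u(r \wedge T_{L}) \in A\}$ lies in $\mathcal{F}_{r \wedge T_{L}} \subseteq \mathcal{F}_{s}$, since $r \wedge T_{L}$ is an $(\mathcal{F}_{t})$-stopping time bounded by $s$ and $u$ is $(\mathcal{F}_{t})$-adapted with continuous $H^{-3}$-paths on $[0,T_{L}]$; hence $u_{L}^{-1}(\mathcal{B}_{s}^{0}) \subseteq \mathcal{F}_{s}$, and intersecting over $s>t$ and invoking right-continuity of $(\mathcal{F}_{t})$ yields $u_{L}^{-1}(\mathcal{B}_{t}) \subseteq \mathcal{F}_{t+} = \mathcal{F}_{t}$. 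A standard change-of-variables applied to the displayed identity, combined with the $(\mathcal{F}_{t})$-martingale properties from the previous paragraph, then gives (M2) under $P$.

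Condition (M3) is the most straightforward: by \eqref{u esssup finite}, the continuous embedding $H^{\gamma} \hookrightarrow L^{2}$, and the bound $T_{L} \leq L$, there exists a deterministic $M_{0} < \infty$ with $\sup_{t \geq 0} \|u(t \wedge T_{L})\|_{H^{\gamma}} \leq M_{0}$ $\mathbf{P}$-a.s. Pushing forward yields the pathwise bounds $\sup_{r \in [0, t \wedge \tau_{L}]} \|\pi_{r}\|_{L^{2}}^{2q} \lesssim M_{0}^{2q}$ and $\int_{0}^{t \wedge \tau_{L}} \|\pi_{r}\|_{L^{2}}^{2(q-1)} \|\pi_{r}\|_{H^{\gamma}}^{2}\,dr \lesssim L\, M_{0}^{2q}$ $P$-a.s., so \eqref{M3-ineq_stop} holds with $C_{t,q}$ depending only on $M_{0}$, $L$, and $q$. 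I expect the filtration-inclusion step in (M2) to be the main subtle point, as it requires handling simultaneously the right-continuity of both $(\mathcal{F}_{t})$ and $(\mathcal{B}_{t})$ and relies crucially on $u$ being probabilistically strong rather than merely a martingale solution.
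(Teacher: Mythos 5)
Your proposal is correct and follows essentially the same route as the paper: (M1) from the deterministic initial value $v(0)$, (M3) from the uniform bound \eqref{u esssup finite}, and (M2) by combining $\tau_L(u_L)=T_L$ with the identity $M^e_0(\cdot\wedge T_L,u_L)=\langle B_{\cdot\wedge T_L},e\rangle$ and transferring the $(\mathcal{F}_t)$-martingale property to $(\mathcal{B}_t)$ under the pushforward. Your explicit verification of $u_L^{-1}(\mathcal{B}_t)\subseteq\mathcal{F}_t$ and of the quadratic variation merely spells out steps the paper leaves implicit in its test-function argument with $\mathcal{B}_r$-measurable $g$.
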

\begin{proof}
	First, the construction of $u = v+z$ entails $u(0) = z(0)+v(0) = v(0)$. Since it is shown in Section \ref{Section_CI} that $v(0) \in L^2_{\sigma}$ is independent of $\omega \in \Omega$, \ref{M1} of Definition \ref{DefMgSol_stop} holds. We recall that the initial condition $x_0 = v(0)$ cannot be prescribed, but is an outcome of the construction of $v$ in Section \ref{Section_CI}.
	Secondly, by construction we have (with respect to $\mathbb{\mathbf{P}}$) 
	$$
	\underset{\omega \in \Omega}{\esssup}\, \underset{t \geq 0}{\text{sup}}\,||u(t \wedge T_L)||_{H^\gamma} < + \infty
	$$ 
	for some $\gamma >0$. In particular, for each $q \geq 1$ the left-hand side of \eqref{M3-ineq_stop} is bounded. Hence, there are functions $t \mapsto C_{t,q}$ as in \ref{M3} such that \eqref{M3-ineq_stop} holds. Finally, considering \ref{M2}, we have for $e \in H^3$, $t >r \geq 0$ and every continuous, bounded $\mathcal{B}_r$-measurable $g: \Omega_0 \to \mathbb{R}$
	\begin{align}\label{Prop4.2.aux1}
	\mathbb{E}_P\bigg[M^e_0(t\wedge \tau_L)g\bigg] = \mathbb{E}_{\mathbb{\mathbf{P}}}\bigg[M^e_0(t\wedge \tau_L(u_L),u_L)g(u_L)\bigg] = \mathbb{E}_{\mathbb{\mathbf{P}}}\bigg[M^e_0(r\wedge \tau_L(u_L),u_L)g(u_L)\bigg] = \mathbb{E}_P\bigg[M^e_0(r\wedge \tau_L)g\bigg].
	\end{align}
	Here, the second equality can be obtained as follows: Due to Lemma \ref{Lem eq stopp times}, we have $$\mathbb{E}_{\mathbb{\mathbf{P}}}\bigg[M^e_0(t\wedge \tau_L(u_L),u_L)g(u_L)\bigg] = \mathbb{E}_{\mathbb{\mathbf{P}}}\bigg[M^e_0(t\wedge T_L,u_L)g(u_L)\bigg].$$
	Since $u$ is an analytically weak solution to \eqref{HSNSE_Intro} on $[0,T_L]$, we have that $M^e_0(\cdot \wedge T_L,u_L):\mathbb{R}_+ \times \Omega \to \mathbb{R}$,
	\begin{align*}
	M^e_0(\cdot \wedge T_L,u_L) &= \langle u(\cdot\wedge T_L)-u(0),e \rangle_{(-3,3)}+\int_0^{\cdot\wedge T_L}\langle F_{\alpha}(u(r)),e \rangle_{(-3,3)}dr \\& = \langle B_{\cdot \wedge T_L},e \rangle_{L^2}
	\end{align*} is an $(\mathcal{F}_t)_{t \geq 0}$-martingale with respect to $\mathbb{\mathbf{P}}$. Hence, the second equality in (\ref{Prop4.2.aux1}) holds, since $g(u_L)$ is $\mathcal{F}_r$-measurable as a concatenation of $g$ with the $(\mathcal{F}_t)_{t \geq 0}$-adapted process $u_L$. 
\end{proof}
At this point, we could already compare $P$ to a classical Galerkin solution \cite{GRZ_MgEx09} on $[0,\tau_L]$ and, upon choosing $L$ sufficiently large, deduce non-uniqueness in law of martingale solutions on $[0,\tau_L]$. However, in view of Theorem \ref{Main_Result}, we first extend $P$ to $[0,+\infty)$  through the procedure of Section \ref{Section_measure_prelims}. To this end, note that Proposition \ref{PropMeasurability1} applies to $\tau_L$ and applying Proposition \ref{PropMeasurability2} to $P$ on $[0,\tau_L]$ yields the desired extended martingale solution $P\otimes_{\tau_L}R$, once we verify condition (\ref{PropMeasurability2_aux1}). This is achieved by the following result, which can be proven exactly as Proposition 3.8. in \cite{HZZ_stoNSE}.
\begin{proposition}\label{PropMeasExtension}
	For the martingale solution $P$ on $[0,\tau_L]$ and the stopping time $\tau_L$ constructed above, all conditions of Proposition \ref{PropMeasurability2} are fulfilled. Consequently $P \otimes_{\tau_L}R \in \mathcal{P}(\Omega_0)$ is a martingale solution to \eqref{HSNSE_Intro} on $[0,+\infty)$ with initial condition $(0,x_0) = (0,v(0))$ and \eqref{eq_P_and_ext_measure} holds.
\end{proposition}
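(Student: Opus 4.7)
By Proposition \ref{PropMeasurability2}, it suffices to verify condition \eqref{PropMeasurability2_aux1}, namely that there is a $P$-negligible set $\mathcal{N}\subseteq\Omega_{0,\tau_L}$ such that for every $x\in\mathcal{N}^c$,
\begin{equation*}
Q_x\bigl(x'\in\Omega_0: \tau_L(x')=\tau_L(x)\bigr)=1.
\end{equation*}
Once this is established, Proposition \ref{PropMeasurability2} directly yields that $P\otimes_{\tau_L}R$ is a martingale solution on $[0,+\infty)$ satisfying \eqref{eq_P_and_ext_measure}, and the initial condition is inherited from $P$: by \ref{M1} of Definition \ref{DefMgSol_stop} combined with Proposition \ref{Prop_P_is_locMGsol}, $P(x(0)=v(0))=1$, so the same holds for the glued measure.

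The key point for verifying \eqref{PropMeasurability2_aux1} is to exploit the approximation $\tau_L=\lim_{n\to\infty}\tau_L^n$ from \eqref{def_tauL}, together with the fact that the process $Z^x$ is a genuine functional of the path $x$ up to time $t$. More precisely, the plan is as follows. By Proposition \ref{PropMeasurability1}, for every $x\in\Omega_0$,
\begin{equation*}
Q_x\bigl(x'\in\Omega_0: x'(s)=x(s)\text{ for all }0\leq s\leq \tau_L(x)\bigr)=1,
\end{equation*}
since the relevant event lies in $\mathcal{B}^0_{\tau_L(x)}$ (it is the intersection over rationals $s\leq\tau_L(x)$ of $\{\pi_s=x(s)\}$, together with a continuity argument). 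From the definitions of $M^{x'}_{t,0}$ and $Z^{x'}(t)$ it is immediate that the values $Z^{x'}(s)$ for $s\leq\tau_L(x)$ depend only on $x'|_{[0,s]}$. Hence, for $Q_x$-a.e. $x'$,
\begin{equation*}
Z^{x'}(s)=Z^x(s),\qquad 0\leq s\leq \tau_L(x).
\end{equation*}

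The crucial observation, which motivates the passage through the sequence $\tau_L^n$ with strictly smaller thresholds $(L-1/n)^{1/4}C_S^{-1}$ and $(L-1/n)^{1/2}C_S^{-1}$, is that for each $n$ we have $\tau_L^n(x)\leq \tau_L(x)$, so $\tau_L^n(x)$ is determined by the restriction $Z^x|_{[0,\tau_L(x)]}$. On the $Q_x$-full event where $Z^{x'}$ and $Z^x$ coincide on $[0,\tau_L(x)]$, the same first-hitting-time characterisation gives $\tau_L^n(x')=\tau_L^n(x)$ for every $n\in\mathbb{N}$ (using that strict inequality $>$ in the threshold conditions guarantees that the respective infima are attained on $[0,\tau_L(x)]$ whenever they are finite, and otherwise $\tau_L^n$ equals $L$, in which case both paths produce the same value). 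Letting $n\to\infty$ and using the monotone convergence $\tau_L^n\nearrow\tau_L$ pathwise yields
\begin{equation*}
\tau_L(x')=\lim_{n\to\infty}\tau_L^n(x')=\lim_{n\to\infty}\tau_L^n(x)=\tau_L(x)\quad\text{for }Q_x\text{-a.e. }x',
\end{equation*}
which is \eqref{PropMeasurability2_aux1} with $\mathcal{N}=\emptyset$.

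The main technical obstacle is the careful handling of the case $\tau_L^n(x)=\tau_L(x)$, since in this borderline situation the hitting event may fail to be $\mathcal{B}^0_{\tau_L(x)}$-measurable in the classical sense. The strict gap between the thresholds of $\tau_L^n$ and $\tau_L$, together with the continuity of the paths $s\mapsto Z^x(s)$ guaranteed by Proposition \ref{Prop_Reg_z_maintext} and Lemma \ref{Lem eq stopp times}, is exactly what ensures that for each $n$ the event $\{\tau_L^n=\tau_L^n(x)\}$ lies in the natural $\sigma$-algebra up to the deterministic time $\tau_L(x)$, so that the pathwise identification under $Q_x=\delta_x$ on $\mathcal{B}^0_{\tau_L(x)}$ can be applied. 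This is precisely the argument carried out in \cite[Proposition 3.8]{HZZ_stoNSE}, to which the situation here reduces verbatim once $Z^x$ is replaced by its present fractional counterpart and $T_L$ by the stopping time of \eqref{def_stoppTime_T_L}.
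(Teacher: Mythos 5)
Your proposal takes essentially the same route as the paper, which proves this proposition simply by invoking \cite[Proposition 3.8]{HZZ_stoNSE}: reduce everything to condition \eqref{PropMeasurability2_aux1}, use the $Q_x$-a.s. path agreement up to $\tau_L(x)$ from Proposition \ref{PropMeasurability1} and the non-anticipativity of $x\mapsto Z^x$ to identify the approximating stopping times $\tau_L^n$ for $x$ and $x'$, and pass to the limit $n\to\infty$. The only slight overclaim is $\mathcal{N}=\emptyset$ (and, relatedly, that the strict inequality alone forces the infima to be attained on $[0,\tau_L(x)]$): ruling out the borderline case $\tau_L^n(x)=\tau_L(x)<L$ genuinely uses that $Z^x=z$ on $[0,\tau_L(x)]$ together with the continuity in time of the relevant norms of $z$ (Lemma \ref{Lem eq stopp times}, Proposition \ref{Prop_Reg_z_maintext}), which is available only for $P$-a.e.\ $x$, so $\mathcal{N}$ should be taken as a $P$-null set, exactly as Proposition \ref{PropMeasurability2} permits.
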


Finally, we are prepared to conclude the proof of our main result Theorem \ref{Main_Result}.\\
\\
\textit{Proof of Theorem \ref{Main_Result}:} Let $T >0$ and $K =2$. By Theorem \ref{Theorem_analySol} and Propositions \ref{Prop_P_is_locMGsol}, \ref{PropMeasExtension}, there is $L>1$ and a martingale solution $P \otimes_{\tau_L}R$ to \eqref{HSNSE_Intro} on $[0,+\infty)$ with the following properties. $P$ is the law of $u_L = u (\cdot \wedge T_L)$ on $\Omega_0$ under $\mathbf{P}$, \eqref{eq_P_and_ext_measure} holds and we have $\mathbf{P}(T \geq T_L) > \frac{1}{2}$. Since $\mathbf{1}_{\{\tau_L \geq T\}}||x(T)||^2_{L^2} = \mathbf{1}_{\{\tau_L \geq T\}}||x(T\wedge \tau_L)||^2_{L^2}$ is $\sigma\big(\pi_{t \wedge \tau_L}, t\geq 0\big)$-measurable, (\ref{eq_P_and_ext_measure}) implies
\begin{align*}
\mathbb{E}_{P\otimes_{\tau_L}R}\bigg[\mathbf{1}_{\{\tau_L \geq T\}}||x(T)||^2_{L^2}\bigg]
= \mathbb{E}_P\bigg[\mathbf{1}_{\{\tau_L \geq T\}}||x(T)||^2_{L^2}\bigg].
\end{align*}
Therefore and by (\ref{StoppingTimesEqual}) and the failure of the energy inequality (\ref{Failure_energy_ineq}) with $K =2$, we obtain
\begin{align}\label{ProofMainResult_EnergyFailure}
\notag \mathbb{E}_{P\otimes_{\tau_L}R}\bigg[||x(T)||^2_{L^2}\bigg] &= \mathbb{E}_{P}\bigg[\mathbf{1}_{\{\tau_L \geq T\}}||x(T)||^2_{L^2}\bigg] +  \mathbb{E}_{P\otimes_{\tau_L}R}\bigg[\mathbf{1}_{\{\tau_L \leq T\}}||x(T)||^2_{L^2}\bigg] \\&  \geq \mathbb{E}_{\mathbb{\mathbf{P}}}\bigg[\mathbf{1}_{\{T_L \geq T\}}||u(T)||^2_{L^2}\bigg] > \mathbb{E}_{\mathbb{\mathbf{P}}}\bigg[\mathbf{1}_{\{T_L \geq T\}}4\big(||u(0)||^2_{L^2}+T\cdot \Tr(GG^*)\big)\bigg] \\& \notag >
2\big(||x_0||^2_{L^2}+T\cdot \Tr(GG^*)\big),
\end{align}
where $x_0 \in L^2_{\sigma}$ is the deterministic initial condition of $u$ from Theorem \ref{Theorem_analySol} and the martingale solution $P\otimes_{\tau_L}R$. \\
On the other hand, Theorem \ref{Thm3.1.-analog} and Remark \ref{Rem_simpleEnergyIneq_GalerkinSol} yield the existence of a second martingale solution $\tilde{P}$ to \eqref{HSNSE_Intro} on $[0,+\infty)$ with the same initial value $x_0$ such that 
$$\mathbb{E}_{\tilde{P}}\bigg[||x(T)||^2_{L^2}\bigg] \leq ||x_0||^2_{L^2}+T \cdot \Tr(GG^*).$$
Comparing with (\ref{ProofMainResult_EnergyFailure}), the martingale solutions $P\otimes_{\tau_L}R$ and $\tilde{P}$ on $[0,+\infty)$ are distinct on $[0,T]$, which concludes the proof.\qed 

\appendix
\section{A priori estimates for the linear equation}\label{AppA}
In this section, we provide the necessary \emph{a priori} estimates for the linear part  (\ref{H_lin_st}) of the stochastic hypodissipative equations, i.e. Proposition \ref{Prop_Reg_z_maintext}. For the reader's convenience, we give a full proof of the statement.

We consider the fractional Laplacian $A = A_{\alpha} = (-\D)^{\alpha}$ as an $L^{2}_{\sigma}$-based operator 
\begin{align*}
A \colon D(A) \subset L^{2}_{\sigma} \rightarrow L^{2}_{\sigma},
\end{align*}
with domain $D(A^{\alpha}) = {H}^{2 \alpha}$ for any $\alpha \in (0,1)$. To simplify the notation, we will suppress the subindex here and write $L^{2}$ instead of $L^{2}_{\sigma}$.

The following lemma collects important properties for the semigroup $S_{\alpha}(t)$ generated by the fractional Laplace operator $(-\D)^{\alpha}$. We include a simple proof for the convenience of the reader.

\begin{lemma}
	Let $(S_{\alpha}(t))_{t \geq 0}$ be the semigroup of linear operators in $L({L}^{2})$ generated by $A_{\alpha} = (-\D)^{\alpha}$. Then $(S_{\alpha}(t))_{t \geq 0}$ is an analytic, strongly continuous contraction semigroup. In particular, we have the estimates
	\begin{align}
	\nonumber \| S_{\alpha}(t) \|_{L({L}^{2})} &\leq 1, \quad t \geq 0, \\
	\label{AppA_eq_smoothing} \| A_{\alpha}^{\gamma} S_{\alpha}(t) \|_{L({L}^{2})} = \| (-\D)^{\alpha \gamma} S_{\alpha}(t) \|_{L({L}^{2})} &\leq C_{T,\gamma}  (t^{-\gamma} + 1) , \quad \forall \gamma > 0, t \in (0,T],
	\end{align}
\end{lemma}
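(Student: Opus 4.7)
The plan is to reduce the lemma to an explicit Fourier-multiplier representation. Since $\mathbb{P}$ and $-\Delta$ are Fourier multipliers that commute on periodic functions (the former acts, on each Fourier mode $k$, as the orthogonal projection onto $k^\perp \subset \mathbb{C}^3$), the operator $A_\alpha$ is diagonalized by a basis of divergence-free Fourier modes, with nonnegative eigenvalues $|k|^{2\alpha}$ for $k \in \Z^3$. In particular, $A_\alpha$ is densely defined, nonnegative and self-adjoint on $L^2_\sigma$, and by the spectral theorem the semigroup $S_\alpha(t)$ it generates acts as the Fourier multiplier with symbol $e^{-t|k|^{2\alpha}}$.

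The contraction bound is then immediate: $|e^{-t|k|^{2\alpha}}| \leq 1$ for every $t \geq 0$ and $k \in \Z^3$, so Parseval yields $\|S_\alpha(t)\|_{L(L^2)} \leq 1$. Strong continuity at $t = 0$ follows from dominated convergence in the Fourier series expansion, with $\ell^2$-majorant $|\hat{f}_k|^2$. Analyticity is a consequence of the general fact that any nonnegative self-adjoint operator generates a bounded analytic semigroup on the right half-plane; concretely, the Fourier symbol $z \mapsto e^{-z|k|^{2\alpha}}$ is holomorphic and bounded by $1$ on $\{\mathrm{Re}\, z > 0\}$.

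For the smoothing estimate I first identify $A_\alpha^\gamma$ with $(-\Delta)^{\alpha\gamma}$ on $L^2_\sigma$: both are defined by functional calculus and both act as the Fourier multiplier with symbol $|k|^{2\alpha\gamma}$. Consequently $A_\alpha^\gamma S_\alpha(t)$ is the Fourier multiplier with symbol $|k|^{2\alpha\gamma} e^{-t|k|^{2\alpha}}$, so by Parseval and a one-variable calculus argument,
\begin{align*}
\|A_\alpha^\gamma S_\alpha(t)\|_{L(L^2)} \leq \sup_{k \in \Z^3} |k|^{2\alpha\gamma} e^{-t|k|^{2\alpha}} \leq \sup_{x \geq 0} x^\gamma e^{-tx} = \left(\frac{\gamma}{e}\right)^\gamma t^{-\gamma},
\end{align*}
where the supremum is attained at $x = \gamma/t$. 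Since $t^{-\gamma} \leq t^{-\gamma} + 1$, this yields the advertised bound with $C_{T,\gamma} = (\gamma/e)^\gamma$ (independent of $T$, in fact).

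I do not foresee a serious obstacle, since the statement is essentially a textbook application of the spectral theorem for the nonnegative self-adjoint operator $A_\alpha$. The only small point that requires explicit verification is the identification $A_\alpha^\gamma = (-\Delta)^{\alpha\gamma}$ on $L^2_\sigma$, which is however transparent on the Fourier side by the uniqueness of the functional calculus; the slight slack introduced by the additive $+1$ inside the bound is a convenient artefact that will allow the stated estimate to be used in integrals over $[0,T]$ without splitting the integration domain.
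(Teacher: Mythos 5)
Your proposal is correct and follows essentially the same route as the paper: diagonalise $A_\alpha$ on the Fourier side, read off the symbol $e^{-t|k|^{2\alpha}}$, and estimate it. The only (harmless) difference is that you obtain the smoothing bound \eqref{AppA_eq_smoothing} directly from $\sup_{x\geq 0} x^{\gamma}e^{-tx}=(\gamma/e)^{\gamma}t^{-\gamma}$, yielding a $T$-independent constant, whereas the paper proves analyticity via the derivative bound $\|S_\alpha'(t)\|_{L(L^2)}\leq e^{-2}t^{-1}$ together with Lunardi's criterion and then invokes standard analytic-semigroup theory for the fractional-power estimate.
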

\begin{proof}
	Since the operator $A_{\alpha}$ has the explicit Fourier series representation 
	\begin{align*}
	A_{\alpha} u(x) = \sum_{k \in {\Z}^{3}} |k|^{2 \alpha} \hat{u}_{k} e^{ik\cdot x},
	\end{align*}
	we can infer the corresponding Fourier series representation of the semigroup:
	\begin{align*}
	S_{\alpha}(t) u(x) := e^{t A}u(x) := \sum_{k \in {\Z}^{3}} e^{- |k|^{2 \alpha} t} \hat{u}_{k} e^{i k \cdot x}, \quad u \in {L}^{2}.
	\end{align*}
	That this is a strongly continuous semigroup can be easily checked. Furthermore, we have the following simple contraction bound, using Plancherel's theorem and estimating the exponential by 1
	\begin{align*}
	\left\| S_{\alpha}(t) u \right\|_{L^{2}}^{2} = \sum_{k \in {\Z}^{3}} e^{- 2|k|^{2 \alpha} t} | \hat{u}_{k}|^{2} \leq \sum_{k \in {\Z}^{3}} | \hat{u}_{k}|^{2} = \| u \|_{L^{2}}^{2} ~ \Longrightarrow ~ \left\| S_{\alpha}(t) \right\|_{L({L}^{2})} \leq 1. 
	\end{align*}
	We are left to prove the analyticity. To this end, let $t > 0$ and consider the $t$-derivative of $t \mapsto S_{\alpha}(t)$:
	\begin{align*}
	S'_{\alpha}(t) u(x) = \sum_{k \in {\Z}^{3}} - |k|^{2 \alpha} e^{- |k|^{2 \alpha} t} \hat{u}_{k} e^{i k \cdot x} = - \frac{1}{t} \sum_{k \in {\Z}^{3}}  t|k|^{2 \alpha} e^{- |k|^{2 \alpha} t} \hat{u}_{k} e^{i k \cdot x}.
	\end{align*}
	This implies, using again Plancherel's theorem and since the function $[0,\infty) \ni z \mapsto z^{2} e^{-2z}$ has global maximum $e^{-2}$, that
	\begin{align*}
	\| S'_{\alpha}(t) u \|_{L^{2}}^{2} = \frac{1}{t^{2}} \sum_{k \in {\Z}^{3}}  t^{2}|k|^{4 \alpha} e^{- 2|k|^{2 \alpha} t} | \hat{u}_{k} |^{2} \leq \frac{1}{t^{2}} e^{-2} \| u \|_{L^{2}}^{2} ~ \Longrightarrow ~ \| S'_{\alpha}(t) \|_{L({L}^{2})} \leq \frac{1}{t} e^{-2}.
	\end{align*}
	Therefore, by \cite[Proposition 2.1.9]{Lunardi95}, the assertion follows with $M_{0} = 1$, $M_{1} = e^{-2}$ and $\omega = 0$.
\end{proof}

The goal of this section is to prove the following proposition.
\begin{proposition}\label{Regularity-z}
	Assume that for some $\sigma >0$ we have $\Tr \left[ A_{\alpha}^{\rho_{0}} G G^{*} \right] = \Tr \left[ (-\D)^{\rho_{0} \alpha} G G^{*} \right] < \infty$ for $\rho_{0} = \frac{5 + 2\sigma - 2 \alpha}{2 \alpha}$, and let $T > 0$. 
	Then, for sufficiently small $\delta > 0$,
	\begin{equation}\label{Reg_z}
	\E \left[ \| z \|_{C_{T}H^{\frac{5 + \sigma}{2}}} + \| z \|_{C^{\frac{1}{2} - 2\delta}_{T}H^{\frac{3 + \sigma}{2}}}  \right] < \infty.
	\end{equation}
\end{proposition}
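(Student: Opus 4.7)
The solution $z$ is the stochastic convolution
\begin{equation*}
z(t) = \int_{0}^{t} S_{\alpha}(t-r)\, G\, dB_{r},
\end{equation*}
and I would attack the claim by the classical factorisation method of Da Prato--Kwapie\'n--Zabczyk. For $\delta\in(0,1/2)$ small (to be chosen last) define
\begin{equation*}
Y_{\delta}(s) := \int_{0}^{s} (s-r)^{-\delta}\, S_{\alpha}(s-r)\, G\, dB_{r},
\end{equation*}
so that the stochastic Fubini identity yields
\begin{equation*}
z(t) = \frac{\sin(\pi\delta)}{\pi}\int_{0}^{t} (t-s)^{\delta-1}\, S_{\alpha}(t-s)\, Y_{\delta}(s)\, ds.
\end{equation*}
Since $s \mapsto (t-s)^{\delta-1}S_{\alpha}(t-s)$ is a bounded linear operator from $L^{2p}(0,T;H^{\frac{5+\sigma}{2}})$ to $C([0,T];H^{\frac{5+\sigma}{2}})$ for any $2p>\delta^{-1}$ (by H\"older's inequality and the contractivity of $S_{\alpha}$), the problem reduces to uniform $L^{2p}(\Omega)$--bounds for $\|Y_{\delta}(s)\|_{H^{(5+\sigma)/2}}$.

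\textbf{Step 1: moment bound for $Y_{\delta}$.} By the It\^o isometry (together with Gaussian hypercontractivity to pass from $L^{2}$ to $L^{2p}$),
\begin{equation*}
\mathbb{E}\bigl[\|Y_{\delta}(s)\|_{H^{(5+\sigma)/2}}^{2}\bigr] \lesssim \int_{0}^{s} (s-r)^{-2\delta}\, \|A_{\alpha}^{(5+\sigma)/(4\alpha)} S_{\alpha}(s-r)\, G\|_{L_{2}(U,L^{2})}^{2}\, dr.
\end{equation*}
Writing $A_{\alpha}^{(5+\sigma)/(4\alpha)} = A_{\alpha}^{(5+\sigma)/(4\alpha)-\rho_{0}/2}\, A_{\alpha}^{\rho_{0}/2}$ and using the trace class hypothesis $\|A_{\alpha}^{\rho_{0}/2} G\|_{L_{2}(U,L^{2})}^{2} = \Tr[A_{\alpha}^{\rho_{0}} GG^{*}] < \infty$ together with the smoothing estimate \eqref{AppA_eq_smoothing}, I get
\begin{equation*}
\|A_{\alpha}^{(5+\sigma)/(4\alpha)} S_{\alpha}(t)\, G\|_{L_{2}(U,L^{2})}^{2} \lesssim (t^{-1+\sigma/(2\alpha)}+1)\, \Tr[A_{\alpha}^{\rho_{0}}GG^{*}],
\end{equation*}
since a short computation gives $\frac{5+\sigma}{4\alpha}-\frac{\rho_{0}}{2} = \frac{1}{2}-\frac{\sigma}{4\alpha}\in(0,1/2)$. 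Plugging in and choosing $\delta<\sigma/(4\alpha)$ (and also $\delta<1/4$, so that the H\"older norm in the claim has positive exponent), the resulting $r$-integral is finite, giving $\sup_{s\in[0,T]}\mathbb{E}\|Y_{\delta}(s)\|_{H^{(5+\sigma)/2}}^{2p}<\infty$ for every $p\geq 1$.

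\textbf{Step 2: from $Y_{\delta}$ to $z$.} Taking $p$ so large that $2p\delta>1$, the factorisation formula and H\"older give, pathwise,
\begin{equation*}
\|z(t)\|_{H^{(5+\sigma)/2}} \lesssim \Bigl(\int_{0}^{T} (t-s)^{(\delta-1)\frac{2p}{2p-1}} ds\Bigr)^{\!\frac{2p-1}{2p}} \Bigl(\int_{0}^{T}\|Y_{\delta}(s)\|_{H^{(5+\sigma)/2}}^{2p} ds\Bigr)^{\!1/(2p)},
\end{equation*}
so taking expectations and using Step~1 delivers $\mathbb{E}\|z\|_{C_{T}H^{(5+\sigma)/2}}<\infty$.

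\textbf{Step 3: time H\"older norm.} For the second norm I would argue similarly but in the lower regularity space $H^{(3+\sigma)/2}$. The decomposition
\begin{equation*}
z(t)-z(s) = \int_{s}^{t} S_{\alpha}(t-r) G\, dB_{r} \;+\; (S_{\alpha}(t-s)-I)\, z(s)
\end{equation*}
splits the increment into a ``fresh'' stochastic convolution, which by the same computation as in Step~1 has $H^{(3+\sigma)/2}$--moments of order $(t-s)^{1-2\delta'}$ for small $\delta'>0$, and a deterministic semigroup error $(S_{\alpha}(t-s)-I)z(s)$, which using $\|(S_{\alpha}(h)-I)A_{\alpha}^{-\theta}\|_{op}\lesssim h^{\theta}$ for $\theta\in(0,1]$ loses one derivative (going from $H^{(5+\sigma)/2}$ to $H^{(3+\sigma)/2}$ costs $\theta=1/(2\alpha)\cdot 1>1/2$, more than enough) and is controlled by Step~2. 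A Kolmogorov continuity argument with sufficiently many moments then upgrades the pointwise H\"older exponent from $1/2-\delta'$ down to $1/2-2\delta$, yielding the claim.

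\textbf{Main obstacle.} The delicate point is the interplay of parameters: the exponents $\delta$ (factorisation), $p$ (moment), and the gap $\frac{5+\sigma}{4\alpha}-\frac{\rho_{0}}{2}=\frac{1}{2}-\frac{\sigma}{4\alpha}$ produced by the trace assumption must jointly satisfy $2\delta+1-\sigma/(2\alpha)<1$ and $2p\delta>1$. The specific choice of $\rho_{0}=\frac{5+2\sigma-2\alpha}{2\alpha}$ in the hypothesis is precisely what makes this gap strictly less than $1/2$ (equivalently, makes the kernel $r\mapsto (s-r)^{-1+\sigma/(2\alpha)}$ integrable), so tracking this reserve of regularity $\sigma/(2\alpha)$ through Steps~1--3 is the most subtle bookkeeping in the argument.
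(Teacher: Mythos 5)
Your argument is correct, and its first half coincides with the paper's proof in Appendix \ref{AppA}: the same factorisation representation, the same splitting $A_{\alpha}^{(5+\sigma)/(4\alpha)}=A_{\alpha}^{(5+\sigma)/(4\alpha)-\rho_{0}/2}A_{\alpha}^{\rho_{0}/2}$ against the trace hypothesis, the same smoothing bound \eqref{AppA_eq_smoothing}, and the same exponent gap $\tfrac12-\tfrac{\sigma}{4\alpha}$ governing the admissible factorisation parameter. Where you genuinely diverge is the time-H\"older bound. The paper stays entirely inside the factorisation framework: it observes that for the lower regularity $\rho=\tfrac{3+\sigma}{4\alpha}$ one may take $j(\rho)=\rho$ (no semigroup smoothing is needed, since $\tfrac{3+\sigma}{4\alpha}\leq\tfrac{\rho_0}{2}$), so the factorisation parameter $\theta$ can be pushed up to $\tfrac12$, and then the deterministic convolution operator $R_{\theta,0}\colon L^{2k}(0,T;L^{2})\to C^{\delta}([0,T];L^{2})$, $\delta<\theta-\tfrac{1}{2k}$ (cf.\ \cite[Prop.\ A.1.1]{DPZ96}), delivers the exponent $\tfrac12-2\delta$ with no further work. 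You instead split the increment $z(t)-z(s)$ into a fresh stochastic convolution plus the semigroup error $(S_{\alpha}(t-s)-\Id)z(s)$, estimate moments of each, and invoke Kolmogorov/Garsia--Rodemich--Rumsey; this is equally standard and works, the fresh term giving second moments of order $t-s$ (hence exponent arbitrarily close to $\tfrac12$ after taking high moments) and the semigroup term being Lipschitz-in-time thanks to the one-derivative headroom between $H^{\frac{5+\sigma}{2}}$ and $H^{\frac{3+\sigma}{2}}$ (note the cost is $2\alpha\theta$ derivatives for exponent $\theta\leq 1$, so your ``$\theta=\tfrac{1}{2\alpha}$'' should be capped at $1$, which is still more than enough). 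The paper's route buys a shorter proof with a single piece of machinery and no Kolmogorov argument; yours is more hands-on and makes the mechanism of the H\"older exponent transparent. Two small bookkeeping points you should close: for the Kolmogorov/GRR step you need $2p$-th moments of the increments, so Step 2 should record that the factorisation argument in fact yields $\E\|z\|^{2p}_{C_{T}H^{(5+\sigma)/2}}<\infty$ for all $p$ (it does, since $Y\in L^{2k}(\Omega\times[0,T])$ for every $k$), not merely the first moment; and since Kolmogorov produces a H\"older-continuous modification, you should use the continuity of $z$ in $H^{\frac{3+\sigma}{2}}$ already established in Step 2 to identify that modification with $z$ itself, so that the bound indeed holds for $z$ as in \eqref{Reg_z}.
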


\begin{proof}
	The proof proceeds in a similar fashion to that of \cite[Proposition 34, p. 83]{Debussche13}. We use the \emph{factorization method} (cf. \cite[Section 5.3.1]{DPZ92}) to write
	\begin{equation*}
	z(t) = \int_{0}^{t} (t-s)^{\theta - 1}S_{\alpha}(t-s)Y(s) ds
	\end{equation*}
	for
	\begin{equation*}
	Y(s) = \frac{\sin (\pi \theta)}{\pi} \int_{0}^{s}  (s-r)^{-\theta} S_{\alpha}(s-r) G dW(r).
	\end{equation*}
	Define
	\begin{align*}
	j(\rho) = \begin{cases}
	\frac{\rho_{0}}{2}, \quad & \rho = \frac{5 + \sigma}{4\alpha} \\
	\rho, \quad &\rho = \frac{3 + \sigma}{4\alpha} \\
	0, \quad &\rho = 0.
	\end{cases}
	\end{align*}
	We first prove that for any $k \in \N$, $\psi = A_{\alpha}^{\rho} Y$ is in $L^{2k}(0,T;L^{2})$ $P$-a.s. for any of the three choices of $\rho$ above. Since $Y$ is Gaussian, we can estimate its higher moments by the second moment. Combining this with It\^{o}'s isometry and the estimate \eqref{AppA_eq_smoothing} we find
	\begin{align*}
	\E \left[ |A_{\alpha}^{\rho} Y(s) |_{L^{2}}^{2k} \right] &\leq c_{k} \left( \E \left[ |A_{\alpha}^{\rho} Y(s) |_{L^{2}}^{2} \right] \right)^{k} = c_{k} \left( \int_{0}^{s} (s-r)^{-2 \theta} |A_{\alpha}^{\rho} S_{\alpha}(s-r) G  |_{L_{2}}^{2} dr  \right)^{k} \\
	&\leq c_{k} \| A_{\alpha}^{j(\rho)} G \|_{L_{2}}^{2k} \left( \int_{0}^{s} (s-r)^{-2 \theta}  \| A_{\alpha}^{\rho - j(\rho) } S_{\alpha}(s-r) \|_{L(L^2)}^{2} d r \right)^{k} \\
	&\leq c_{k,\gamma} \| A_{\alpha}^{j(\rho)} G \|_{L_{2}}^{2k} \left( \int_{0}^{s} (s-r)^{-2 (\theta + \rho - j(\rho))} + (s-r)^{- 2 \theta}  d r \right)^{k}.
	\end{align*}
	The first factor is finite in all three cases since $\| A_{\alpha}^{\frac{\rho_{0}}{2}} G \|_{L_{2}}^{2} = \Tr \left[ A_{\alpha}^{\rho_{0}} G G^{*} \right] < \infty$ by assumption. For the integrals to be finite, we have the necessary condition 
	$$\rho - j(\rho) < \frac{1}{2}.$$ 
	Since the only $\rho$ that we want to consider that gives nonzero left-hand side is $\rho = \frac{5 + \sigma}{4 \alpha}$, this holds because $\rho_{0} = \frac{5 + 2\sigma - 2 \alpha}{2 \alpha} > \frac{5 + \sigma - 2 \alpha}{2 \alpha}$. 
	Therefore, to get finite integrals, we need to choose $\theta < \min \left\{ 1/2, 1/2 - \rho + j(\rho) \right\}$. This implies that $Y \in L^{2k}(\O \times [0,T]; D(A_{\alpha}^{\rho} ))$ for any $k \in \N$.
	
	Following \cite{DPZ96}, we define the (deterministic) convolution operator
	\begin{align*}
	R_{\theta,0} (\psi) := \int_{0}^{t} (t-s)^{\theta - 1} S_{\alpha}(t-s) \psi(s) ds, \quad \psi \in L^{2k}(0,T;L^{2}).
	\end{align*}
	We note that $z = R_{\theta,0}(Y)$ and $A_{\alpha}^{\rho} z = R_{\theta,0}(A_{\alpha}^{\rho} Y)$. By \cite[Proposition A.1.1 $(ii)$, p. 307]{DPZ96}, for any $\delta \in (0, \theta - \frac{1}{2k})$, $R_{\theta,0}$ is a bounded linear operator
	\begin{align*}
	R_{\theta,0} \colon L^{2k}(0,T; L^{2}) \rightarrow C^{\delta}([0,T];L^{2}) .
	\end{align*}
	Therefore, since $\frac{5 + \sigma}{4 \alpha} - \frac{\rho_{0}}{2} = \frac{1}{2} - \frac{\sigma}{4 \alpha}$, we can estimate for $0 < \delta < \min\{ \frac{1}{2}, \frac{\sigma}{4 \alpha} \} - \frac{1}{2k}$ and $k$ sufficiently large:
	\begin{align*}
	\E \left[ \| z \|_{C_{T} H^{\frac{5+\sigma}{2}}} \right] &\leq C_{\sigma} \E \left[ \| z \|_{C_{T} L^{2}} + \| A_{\alpha}^{\frac{5 + \sigma}{4 \alpha}} z \|_{C_{T} L^{2}} \right] \leq C_{\sigma} \E \left[ \| z \|_{C_{T}^{\delta} L^{2}} + \| A_{\alpha}^{\frac{5 + \sigma}{4 \alpha}} z \|_{C_{T}^{\delta} L^{2}} \right] \\
	&\leq C_{\sigma,k} \E \left[ \| Y \|_{L^{2k}(0,T; L^{2}) } \right] + C_{\sigma,k} \E \left[ \| A_{\alpha}^{\frac{5 + \sigma}{4\alpha}} Y \|_{L^{2k}(0,T; L^{2}) } \right] < \infty.
	\end{align*}
	And in a similar way we find for any $\delta >0 $ with $0 < \frac{1}{2} - 2\delta < \frac{1}{2} - \frac{1}{2k}$ and $k$ sufficiently large:
	\begin{align*}
	\E \left[ \| z \|_{C_{T}^{\frac{1}{2} - 2\delta} H^{\frac{3+\sigma}{2}}} \right] &\leq C_{\sigma} \E \left[ \| z \|_{C_{T}^{\frac{1}{2} - 2\delta} L^{2}} + \| A_{\alpha}^{\frac{3 + \sigma}{4 \alpha}} z \|_{C_{T}^{\frac{1}{2} - 2\delta} L^{2}} \right] \\
	&\leq C_{\sigma,k} \E \left[ \| Y \|_{L^{2k}(0,T; L^{2}) } \right]  + C_{\sigma,k}  \E \left[ \| A_{\alpha}^{\frac{3 + \sigma}{4\alpha}} Y \|_{L^{2k}(0,T; L^{2}) }  \right] < \infty. \qedhere
	\end{align*}
\end{proof}
\section{Proof of Theorem \ref{Thm3.1.-analog}}\label{AppB}
Here we prove both parts of Theorem \ref{Thm3.1.-analog}. Prior to the proof we state the following identities for the fractional Laplace operator $(-\Delta)^\alpha$ and the fractional Sobolev spaces $H^{\alpha}$, $\alpha \in (0,1)$, which readily follow via the definition of $(-\Delta)^\alpha$ as a Fourier multiplier and the density of the embedding $H^3 \hookrightarrow H^\alpha$. 
\begin{equation}\label{eq_H-alpha_norm_aux}
||y||^2_{H^{\alpha}} = \underset{z \in H^3, ||z||_{H^\alpha} \leq 1}{\sup}\langle y,z\rangle_{H^{\alpha}} = \underset{z \in H^3, ||z||_{H^\alpha} \leq 1}{\sup}\langle y, (1-\Delta)^\alpha)z\rangle_{L^2},\quad y \in H^\alpha,
\end{equation}
\begin{equation}\label{eq_frac_Laplace_pairing_aux1}
\langle (-\Delta)^{\alpha}y,z\rangle_{(-3,3)} = \langle y, (-\Delta)^{\alpha}z\rangle_{L^2},\quad y \in L^2_{\sigma}, z \in H^3
\end{equation}and
\begin{equation}\label{eq_frac_Laplace_pairing_aux2}
\langle (-\Delta)^\alpha y, y\rangle_{(-3,3)} = ||(-\Delta)^{\alpha/2}y||^2_{L^2}, \quad y \in H^3.
\end{equation}
We also need the following observation.
\begin{lemma}\label{Lem_ext_F-alpha}
	For $\alpha \in (0,1)$ the mapping $F_{\alpha}: y \mapsto -\divv(y\otimes y)-(-\Delta)^\alpha y$ extends from $H^1$ to an operator $F_{\alpha}: L^2_{\sigma} \to H^{-3}$ with
	$$\langle F_{\alpha}(y),z\rangle_{(-3,3)} = \langle y\otimes y, \nabla z \rangle_{L^2} - \langle y, (-\Delta)^{\alpha}z\rangle_{L^2}, \quad y \in L^2_{\sigma},\, z \in H^3.$$
\end{lemma}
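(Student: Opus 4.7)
The plan is to verify that, for every $y \in L^2_\sigma$, the linear form
\begin{equation*}
\Lambda_y \colon H^3 \to \mathbb{C}, \qquad \Lambda_y(z) := \langle y \otimes y, \nabla z \rangle_{L^2} - \langle y, (-\Delta)^\alpha z \rangle_{L^2}
\end{equation*}
is well-defined and continuous, so that $F_\alpha(y) := \Lambda_y \in H^{-3}$, and then to check that this assignment agrees with the classical definition $y \mapsto -\divv(y \otimes y) - (-\Delta)^\alpha y$ whenever $y \in H^1$.

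First I would bound the two terms in $\Lambda_y(z)$ separately. For the nonlinear term, Cauchy--Schwarz gives
\begin{equation*}
|\langle y \otimes y, \nabla z \rangle_{L^2}| \leq \|y\|_{L^2}^2 \, \|\nabla z\|_{L^\infty},
\end{equation*}
and the Sobolev embedding $H^{l-1} \hookrightarrow L^\infty$ with $l = 3$ (which is recalled immediately before the statement and is the very reason for choosing $H^{-3}$) yields $\|\nabla z\|_{L^\infty} \leq C \|z\|_{H^3}$. For the dissipative term, since $2\alpha < 2 < 3$, the continuity of $(-\Delta)^\alpha \colon H^3 \to H^{3-2\alpha} \hookrightarrow L^2$ combined with Cauchy--Schwarz gives
\begin{equation*}
|\langle y, (-\Delta)^\alpha z \rangle_{L^2}| \leq \|y\|_{L^2}\,\|(-\Delta)^\alpha z\|_{L^2} \leq C \|y\|_{L^2}\,\|z\|_{H^3}.
\end{equation*}
Adding these estimates shows $\|\Lambda_y\|_{H^{-3}} \leq C(\|y\|_{L^2}^2 + \|y\|_{L^2})$, so that $F_\alpha(y) \in H^{-3}$ for every $y \in L^2_\sigma$.

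It remains to see that this extends the classical map. For $y \in H^1$, we have $y \otimes y \in L^1$ and $\divv(y \otimes y) \in H^{-2}$ (say), while $(-\Delta)^\alpha y \in H^{1-2\alpha}$, so both summands naturally sit in $H^{-3}$. For $z \in H^3$, integration by parts yields $\langle -\divv(y \otimes y), z \rangle_{(-3,3)} = \langle y \otimes y, \nabla z \rangle_{L^2}$, and the duality identity \eqref{eq_frac_Laplace_pairing_aux1} gives $\langle -(-\Delta)^\alpha y, z \rangle_{(-3,3)} = -\langle y, (-\Delta)^\alpha z \rangle_{L^2}$. Summing these two identities shows that $\langle F_\alpha(y), z \rangle_{(-3,3)} = \Lambda_y(z)$ on $H^1$, which establishes that $\Lambda_y$ is the correct extension.

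There is no real obstacle here; the argument is purely a matter of collecting the Sobolev embedding $H^3 \hookrightarrow W^{1,\infty}$ and the mapping property of the fractional Laplacian, both of which have been set up in the notational preliminaries. The only small point worth checking is that $2\alpha \leq 3$, which is automatic since $\alpha \in (0,1)$, and which ensures that the second term causes no loss of regularity beyond what $H^3$ already provides.
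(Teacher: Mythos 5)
Your proposal is correct and follows essentially the same route as the paper: both arguments rest on the bound $|\langle y\otimes y,\nabla z\rangle_{L^2}|\le \|y\|_{L^2}^2\|\nabla z\|_{L^\infty}\lesssim \|y\|_{L^2}^2\|z\|_{H^3}$ via the embedding $H^2\hookrightarrow L^\infty$, together with the duality identity \eqref{eq_frac_Laplace_pairing_aux1} for the fractional Laplacian. The only (immaterial) difference is that you define the extension directly by the stated formula and verify its boundedness and consistency on $H^1$, whereas the paper obtains it from the same estimate by density of $H^1$ in $L^2_{\sigma}$.
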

\begin{proof}
	Considering (\ref{eq_frac_Laplace_pairing_aux1}), we only need to extend $y \mapsto \text{div}(y\otimes y)$ to $L^2_{\sigma}$. For $y \in H^1$, we have
	$$|\langle \text{div}(y\otimes y),z\rangle_{(-3,3)}| = |\langle \text{div}(y\otimes y),z\rangle_{L^2}| = |\langle y \cdot \nabla z, y\rangle_{L^2}| \leq ||\nabla z||_{L^{\infty}}||y||^2_{L^2} \leq C||y||^2_{L^2}, \quad ||z||_{H^3} \leq 1$$
	where $C>0$ is independent of $y$ and $z$ and comes from the Sobolev embedding $ H^2 \hookrightarrow L^{\infty}$. Now the claim follows by density of $H^1$ in $L^2_{\sigma}$.
\end{proof}
With these preparations, we proceed to the proof of Proposition \ref{Thm3.1.-analog}.
\\
\textit{Proof of \ref{Thm.3.1-analog_itm1}:} We aim to obtain the existence result by \cite[Thm. 4.6.]{GRZ_MgEx09}. Note that Definition \ref{DefMgSol} is compatible with \cite[Definition 3.1.]{GRZ_MgEx09} for the choices $Y=H=L^2_{\sigma}$, $X = H^{-3}$, the functions $$\mathcal{N}_q: Y \to [0,+\infty], \quad \mathcal{N}_q(y) = \begin{cases}
||y||_{L^2}^{2(q-1)}\cdot||y||^2_{H^\alpha},\, &\text{if } y \in H^{\alpha}, \\
+\infty,\, &\text{else}
\end{cases}$$
and drift and diffusion coefficient
$$A: Y \to X, \quad A(y) = F_{\alpha}(y)$$
and $B: Y \to L_2(U,H)$, $B \equiv G$, respectively.
Hence, we show that these choices satisfy all relevant assumptions of \cite{GRZ_MgEx09}, in particular the main conditions (C1)-(C3) of \cite[p.1733]{GRZ_MgEx09}.

First, the embeddings $H^3 \hookrightarrow L^2_{\sigma} \hookrightarrow L^2_{\sigma} \hookrightarrow H^{-3}$ are continuous and dense, and the first and the last embedding are also compact. In particular, there is an orthonormal basis of $L^2_{\sigma}$ in $H^3$. We choose the linear span of such basis as the countable set of test vector fields $\mathcal{E}$ in \cite{GRZ_MgEx09}.

Secondly, we note that all assumptions on the diffusion coefficient of \cite{GRZ_MgEx09} are fulfilled for our choice of the constant operator-valued map $B \equiv G \in L_2(U,L^2_{\sigma})$.

Next, $\mathcal{N}_1$ belongs to the class $\mathcal{U}^2$ of \cite{GRZ_MgEx09} and each $\mathcal{N}_q$ is lower semicontinuous on $L^2_{\sigma}$. Indeed, it is clear that $\mathcal{N}_1(0) = 0$ and $\mathcal{N}_1(cy) \leq c^2\mathcal{N}_1(y)$ for each $y \in L^2_{\sigma}$ and $c \geq 0$. Furthermore, $\{y \in L^2_{\sigma}: \mathcal{N}_1(y) \leq 1\}$ is relatively compact in $L^2_{\sigma}$, since the embedding $H^{\alpha}\hookrightarrow L^2_{\sigma}$ is compact. Concerning the lower semicontinuity of $\mathcal{N}_1$ on $L^2_{\sigma}$, for $y_n \underset{n \to \infty}{\longrightarrow}y$ in $L^2_{\sigma}$, we need to show
\begin{equation}\label{lsc_N1_aux1}
||y||_{H^\alpha}^2 \leq \underset{n \to \infty}{\liminf}\,||y_n||^2_{H^\alpha}.
\end{equation}
Hence, without loss of generality, assume $\underset{n \geq 1}{\text{sup}}\,||y_n||_{H^\alpha}^2 < +\infty$, i.e. $(y_n)_{n \geq 1}$ is bounded in $H^{\alpha}$ and for some subsequence and some $y' \in H^\alpha$ we have $y_{n_k} \underset{k \to \infty}{\longrightarrow}y'$ weakly in $H^{\alpha}$. Consequently $y = y'$, i.e. $||y||^2_{H^\alpha} < +\infty$. Now \eqref{lsc_N1_aux1} follows from \eqref{eq_H-alpha_norm_aux} via
\begin{align*}
||y||^2_{H^\alpha} &= \underset{\substack{z \in H^3,\\ ||z||_{H^\alpha} \leq 1}}{\text{sup}}|\langle y,(1-\Delta)^\alpha z\rangle_{L^2}| = \underset{\substack{z \in H^3,\\ ||z||_{H^\alpha} \leq 1}}{\text{sup}}\underset{n \to \infty}{\text{lim}}|\langle y_n,(1-\Delta)^\alpha z\rangle_{L^2}| \\
&\leq \underset{n \to \infty}{\liminf}\underset{\substack{z \in H^3,\\ ||z||_{H^\alpha} \leq 1}}{\text{sup}}|\langle y_n,(1-\Delta)^\alpha z\rangle_{L^2}| = \underset{n \to \infty}{\liminf}\,||y_n||^2_{H^\alpha}.
\end{align*}
It follows that also each $\mathcal{N}_q$ as defined above is lower semicontinuous as the product of a continuous and a lower semicontinuous nonnegative function.

To conclude the proof of \ref{Thm.3.1-analog_itm1}, it remains to verify conditions (C1)-(C3) of \cite{GRZ_MgEx09}. As mentioned before, all properties for the constant diffusion coefficient $B \equiv G$ follow immediately. Concerning (C1), let $y_n \to y$ in $L^2_{\sigma}$ and $z \in H^3$. By Lemma \ref{Lem_ext_F-alpha} we have
\begin{align*}
|\langle F_{\alpha}(y_n)-F_{\alpha}(y), z \rangle _{(-3,3)}| \leq |\langle y_n\otimes y_n -y\otimes y, \nabla z\rangle_{L^2}|+|\langle y_n-y, (-\Delta)^{\alpha}z\rangle_{L^2}|.
\end{align*}
Clearly, the second summand converges to $0$ as $n \longrightarrow \infty$ and, furthermore,
$$ |\langle y_n\otimes y_n -y\otimes y,\nabla z\rangle_{L^2}| \leq ||\nabla z||_{L^{\infty}}||y_n\otimes y_n -y \otimes y||_{L^1} \leq ||\nabla z||_{L^{\infty}}\big(||y_n||_{L^2}+||y||_{L^2}\big)\cdot ||y_n-y||_{L^2} \underset{n \to \infty}{\longrightarrow}0,
$$	which yields the required demicontinuity of $F_{\alpha}$.
Next, for $z \in H^3$, due to $\text{div}(z) = 0$, \eqref{eq_frac_Laplace_pairing_aux2} and the estimate $(1+|k|^{2 \alpha}) \geq 2^{-\alpha} (1 + |k|^{2})^{\alpha}$, we find
$$\langle F_{\alpha}(z),z \rangle_{(-3,3)} = -||(-\Delta)^{\frac{\alpha}{2}}z||_{L^2}^2 \leq - C \mathcal{N}_1(z)+||z||^2_{L^2},$$which gives the required coercivity (C2).
Finally, for $y \in L^2_{\sigma}$ we bound $F_{\alpha}(y)$ in $H^{-3}$ via
\begin{align}
\notag ||F_{\alpha}(y)||_{H^{-3}} &\leq ||\text{div}(y\otimes y)||_{H^{-3}}+||(-\Delta)^{\alpha}y||_{H^{-3}} \\&\notag\leq
\underset{\substack{z \in H^3,\\ ||z||_{H^{3}} \leq 1}}{\text{sup}}|\langle y\otimes y, \nabla z\rangle_{L^2}|+ \underset{\substack{z \in H^3,\\ ||z||_{H^{3}} \leq 1}}{\text{sup}}|\langle y,(-\Delta)^{\alpha}z\rangle_{L^2}| \\&\notag\leq\bigg(
\underset{\substack{z \in H^3,\\ ||z||_{H^3} \leq 1}}{\text{sup}}||\nabla z||_{L^{\infty}}\bigg)||y||^2_{L^2}+ \bigg(\underset{\substack{z \in H^3,\\ ||z||_{H^3} \leq 1}}{\text{sup}}||(-\Delta)^{\alpha}z||_{L^2}\bigg) ||y||_{L^2}
\\&\label{eq_bounddness_F-alpha} \leq C\big(1+||y||_{L^2}^2\big),
\end{align} where we used Lemma \ref{Lem_ext_F-alpha} and the constant $C$, which is independent of $y$ and $z$, comes from the Sobolev embeddings $H^2 \hookrightarrow L^{\infty}$ and $H^{3} \hookrightarrow H^{2\alpha}$. Consequently, still denoting the possibly changing constant by $C$,
$$||F_{\alpha}(y)||^2_{H^{-3}} \leq C(1+||y||_{L^2}^2)^2\leq C(1+||y||^4_{L^2}),$$which gives the desired growth condition. Hence all prerequisites in order to apply Theorem 4.7. of \cite{GRZ_MgEx09} are fulfilled and \ref{Thm.3.1-analog_itm1} follows.
\\
\\
\textit{Proof of \ref{Thm.3.1-analog_itm2}:} Since \cite[Lemma A.1.]{HZZ_stoNSE} applies to our setting, it follows as in the proof of Theorem 3.1. in \cite{HZZ_stoNSE} that $(P_n)_{n\geq 1}$ as in the assertion is tight in $\mathcal{P}(\mathbb{S})$ with $\mathbb{S}:= C_{\mathrm{loc}}(\mathbb{R}_+,H^{-3})\cap L^2_{\mathrm{loc}}(\mathbb{R}_+,L^2_{\sigma})$. Without loss of generality, we assume $P_n \underset{n \to \infty}{\longrightarrow} P$ for some $P \in \mathcal{P}(\mathbb{S})$. Since for any $t >0$ the map $x  \mapsto \underset{r \in [0,t]}{\text{sup}}||x(r)||^2_{L^2}$ is lower semicontinuous on $C(\mathbb{R}_+,H^{-3})$, we may assume $P\big(L^{\infty}_{\mathrm{loc}}(\mathbb{R}_+,L^2_{\sigma})\big) = 1$. From here, the proof can be concluded exactly as in \cite{HZZ_stoNSE}. 
\qed

\section{Beltrami waves}\label{App_Beltrami_waves} The material of this appendix is taken from \cite[Section 5]{BV19b}.
Let $\Lambda \subseteq \mathbb{S}^2 \cap \mathbb{Q}^3$ be finite such that $\Lambda = -\Lambda$. For $\xi \in \Lambda$ let $A_{\xi} \in \mathbb{S}^2 \cap \mathbb{Q}^3$ such that
$$A_{\xi} \cdot \xi = 0,\quad A_{\xi} = A_{-\xi}$$
and define the complex vector
$$B_{\xi} := \frac{1}{\sqrt{2}}\big(A_{\xi}+i\xi \times A_{\xi}\big).$$
By construction, $B_{\xi} \in \mathbb{C}^3$ has the properties
$$|B_{\xi}| = 1, \quad B_{\xi}\cdot \xi = 0, \quad i\xi \times B_{\xi} = B_{\xi}, \quad B_{\xi} = \overline{B_{-\xi}}.$$
Hence, if $\lambda \in \mathbb{Z}$ is such that $\lambda\xi \in \mathbb{Z}^3$, a direct calculation shows that for each $\xi \in \Lambda$ the vector field
$$W_{\xi}(x) := W_{\xi,\lambda}(x) := B_{\xi}e^{i\lambda \xi \cdot x}$$
is $\mathbb{T}^3$-periodic, divergence-free and an eigenfunction of the $\curl$-operator with eigenvalue $\lambda$. Such vector fields are called complex Beltrami waves and are particularly useful due to the following two results (cf. \cite[Proposition 5.5, Proposition 5.6]{BV19b}).
\begin{proposition}\label{Prop_Beltrami_waves}
	Let $\Lambda$ and $\lambda$ be as above and $a_{\xi} \in \mathbb{C}$, $\xi \in \Lambda$, a family of coefficients such that $a_{-\xi} = \overline{a_{\xi}}$. Then the vector field
	$$W(x) := \sum_{\xi \in \Lambda}a_{\xi}B_{\xi}e^{i\lambda \xi \cdot x}$$
	is $\mathbb{R}^3$-valued and divergence-free with $\curl W = \lambda W$. Hence, it is a stationary solution to the Euler equations
	\begin{equation}\label{Beltrami_sol_EE}
	\divv \big(W \otimes W\big) = \nabla \frac{|W|^2}{2}.
	\end{equation}
	Furthermore, we have for all $\xi, \xi' \in \Lambda$
	\begin{equation}\label{Beltrami_aux_1}
	B_{\xi}\otimes B_{-\xi} + B_{-\xi}\otimes B_{\xi} = \Id-\xi \otimes \xi
	\end{equation}
	and
	\begin{equation}\label{Beltrami_aux_2}
	\divv \big(W_{\xi}\otimes W_{\xi'}+ W_{\xi'}\otimes W_{\xi}\big) = \nabla\big(W_{\xi}\cdot W_{\xi'}\big).
	\end{equation}
\end{proposition}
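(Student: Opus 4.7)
\medskip

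The plan is to verify the four claims essentially by direct computation, leveraging the algebraic identities built into the definition of $B_\xi$. I would organize the proof in the order of the statements.

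First, for the properties of $W$: real-valuedness is automatic from the pairing of $\xi$ with $-\xi$ in the sum, since $a_{-\xi}B_{-\xi} e^{-i\lambda\xi\cdot x} = \overline{a_\xi B_\xi e^{i\lambda\xi\cdot x}}$. The divergence-free property follows termwise from $\divv W_\xi = i\lambda(B_\xi\cdot\xi) e^{i\lambda\xi\cdot x} = 0$, using $B_\xi\cdot\xi = 0$. For the curl eigenvalue property, I would simply compute $\curl W_\xi = i\lambda(\xi\times B_\xi)e^{i\lambda\xi\cdot x} = \lambda B_\xi e^{i\lambda\xi\cdot x} = \lambda W_\xi$, directly invoking the defining identity $i\xi\times B_\xi = B_\xi$, and sum over $\xi$.

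Next, the Euler identity \eqref{Beltrami_sol_EE} follows from the previous properties by a standard vector calculus manipulation. Since $\divv W=0$, we have $\divv(W\otimes W) = (W\cdot\nabla)W$, and then the identity $(W\cdot\nabla)W = \nabla\tfrac{|W|^2}{2} - W\times\curl W$ combined with $\curl W = \lambda W$ yields the claim, since $W\times W = 0$. The same identity underlies \eqref{Beltrami_aux_2}: I would apply the polarised version $(U\cdot\nabla)V + (V\cdot\nabla)U = \nabla(U\cdot V) - U\times\curl V - V\times\curl U$ with $U=W_\xi$, $V=W_{\xi'}$, use that each is divergence-free to rewrite $\divv(W_\xi\otimes W_{\xi'} + W_{\xi'}\otimes W_\xi)$ as the left-hand side, and then observe that $\curl W_\xi = \lambda W_\xi$ reduces the cross-product terms to $-\lambda(W_\xi\times W_{\xi'} + W_{\xi'}\times W_\xi) = 0$.

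Finally, the algebraic identity \eqref{Beltrami_aux_1} is a direct expansion. Using $B_{-\xi} = \overline{B_\xi} = \tfrac{1}{\sqrt{2}}(A_\xi - i\xi\times A_\xi)$, a short computation gives
\begin{equation*}
B_\xi\otimes B_{-\xi} + B_{-\xi}\otimes B_\xi = A_\xi\otimes A_\xi + (\xi\times A_\xi)\otimes(\xi\times A_\xi),
\end{equation*}
the imaginary cross-terms cancelling in pairs. Since $A_\xi$, $\xi\times A_\xi$, $\xi$ form an orthonormal basis of $\mathbb{R}^3$ (using $|A_\xi|=|\xi|=1$ and $A_\xi\cdot\xi = 0$), the completeness relation $\sum_k e_k\otimes e_k = \Id$ yields $A_\xi\otimes A_\xi + (\xi\times A_\xi)\otimes(\xi\times A_\xi) = \Id - \xi\otimes\xi$. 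None of these steps presents real difficulty; the only point requiring a bit of care is remembering to use the divergence-free property before invoking the vector-calculus identity in \eqref{Beltrami_aux_2}, and to keep track of conjugation in \eqref{Beltrami_aux_1}.
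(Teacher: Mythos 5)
Your proof is correct. The paper does not prove this proposition itself but cites \cite[Propositions 5.5 and 5.6]{BV19b}, and your direct verification — pairing $\xi$ with $-\xi$ (via $a_{-\xi}=\overline{a_\xi}$, $B_{-\xi}=\overline{B_\xi}$) for realness, the termwise divergence and curl computations, the identity $(W\cdot\nabla)W=\nabla\tfrac{|W|^2}{2}-W\times\curl W$ together with its polarised form, and the completeness relation for the orthonormal frame $\{A_\xi,\,\xi\times A_\xi,\,\xi\}$ giving \eqref{Beltrami_aux_1} — is exactly the standard argument behind the cited result.
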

The following geometric lemma is the reason why Beltrami waves are used in the iterative scheme of Section 3 to obtain a cancellation for the oscillation error. Below, for a symmetric $3\times 3$-matrix $A$ we denote the ball of radius $r>0$ centered at $A$ in the space of symmetric $3\times 3$ matrices by $B_r(A)$.
\begin{lemma}\label{Lemma_geom}
	There is a small $r_0 >0$ such that there exist pairwise disjoint finite sets $\Lambda_{\alpha} \subseteq \mathbb{S}^2 \cap \mathbb{Q}^3$, $\alpha \in \{0,1\}$, with the same cardinality and smooth positive functions $\gamma^{(\alpha)}_{\xi} \in C^{\infty}\big(\overline{B_{r_0}(\Id)}\big)$ with the following properties. For $\alpha \in \{0,1\}$, we have $\Lambda_{\alpha} = -\Lambda_{\alpha}$ and $\gamma^{(\alpha)}_{\xi} = \gamma^{(\alpha)}_{-\xi}$ for each $\xi \in \Lambda_{\alpha}$. Moreover, for each $R \in \overline{B_{r_0}(\Id)}$, we have the identity
	\begin{equation}\label{eq_geom_lemma}
	R = \frac{1}{2}\sum_{\xi \in \Lambda_{\alpha}}\bigg(\gamma^{(\alpha)}_{\xi}(R)\bigg)^2(\Id-\xi \otimes \xi).
	\end{equation}
\end{lemma}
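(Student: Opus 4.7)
The lemma is a standard ingredient in convex integration schemes and its proof is essentially identical to its analogues in the convex integration literature (cf. \cite{DLS13, BV19b}). The plan is a two-step argument: first produce the identity at the base point $R = \Id$ by an explicit choice of vectors and coefficients, then extend smoothly to a neighbourhood by an implicit-function-theorem argument.

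\textbf{Step 1 (base point):} Writing $c_\xi := (\gamma^{(\alpha)}_\xi(\Id))^2$, the required identity at $R = \Id$ reads
\[
\Id = \tfrac{1}{2}\sum_{\xi \in \Lambda_\alpha} c_\xi (\Id - \xi \otimes \xi).
\]
Separating trace and traceless parts reduces this to the two conditions $\sum_\xi c_\xi = 3$ and $\sum_\xi c_\xi \, \xi \otimes \xi = \Id$; in other words $\Lambda_\alpha$ equipped with the weights $c_\xi$ must form a tight frame for $\mathbb{R}^3$. The axis-aligned pairs $\{\pm e_1,\pm e_2,\pm e_3\}$ with $c_{\pm e_i} = 1/2$ satisfy both conditions but only span the diagonal part of $\mathrm{Sym}_3$. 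I would therefore enlarge $\Lambda_\alpha$ by three further pairs of carefully chosen rational unit vectors so that $\{\xi \otimes \xi : \xi \in \Lambda_\alpha\}$ spans the full 6-dimensional space $\mathrm{Sym}_3$, while all weights remain strictly positive and even in $\xi$.

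\textbf{Step 2 (smooth extension):} With Step 1 in hand, consider the linear map
\[
\Psi : (c_\xi)_{\xi \in \Lambda_\alpha} \longmapsto \tfrac{1}{2}\sum_{\xi \in \Lambda_\alpha} c_\xi (\Id - \xi \otimes \xi) \in \mathrm{Sym}_3,
\]
restricted to coefficient tuples satisfying $c_\xi = c_{-\xi}$. By the spanning property established in Step 1, $\Psi$ is surjective onto $\mathrm{Sym}_3$, so the implicit function theorem furnishes a smooth local right-inverse $R \mapsto (c_\xi(R))$ near the coefficient tuple of Step 1. Shrinking to some closed ball $\overline{B_{r_0}(\Id)}$ preserves both the strict positivity $c_\xi(R) > 0$ and the symmetry $c_\xi(R) = c_{-\xi}(R)$, and setting $\gamma^{(\alpha)}_\xi(R) := \sqrt{c_\xi(R)}$ yields the smooth positive functions required by the statement.

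The real obstacle is the rationality constraint $\Lambda_\alpha \subseteq \mathbb{S}^2 \cap \mathbb{Q}^3$, which rules out the geometrically natural choices $(e_i \pm e_j)/\sqrt{2}$ directly. However, since the spanning and positivity conditions are both open in the vectors and coefficients, and since $\mathbb{S}^2 \cap \mathbb{Q}^3$ is dense in $\mathbb{S}^2$ (for instance via Pythagorean quadruples), a sufficiently small rational perturbation of any admissible irrational set yields an admissible rational one. Finally, to obtain two disjoint families $\Lambda_0,\Lambda_1$ as required, one simply runs the construction twice with disjoint finite collections of rational unit vectors, which is feasible since $\mathbb{S}^2 \cap \mathbb{Q}^3$ is infinite.
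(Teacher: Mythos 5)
Your overall strategy is the standard one for this ``first geometric lemma'' (the paper itself does not prove it but imports it from \cite[Section 5]{BV19b}, ultimately from \cite{DLS13}): exhibit a strictly positive representation of $\Id$ by dyads of rational unit vectors whose dyads span $\mathrm{Sym}_3$, invert the linear map $\Psi$ by a smooth right inverse on a small closed ball, take square roots, and run the construction twice for two disjoint families. However, Step 1 as you describe it cannot be carried out. You keep the three coordinate pairs $\{\pm e_i\}$ with weights $1/2$ and propose to add exactly three further pairs so that the six distinct dyads $\xi\otimes\xi$ span the six-dimensional space $\mathrm{Sym}_3$ while all six pair-weights stay strictly positive. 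These two demands are mutually exclusive. Since $M\mapsto(\operatorname{tr}M)\,\Id-M$ is invertible on $\mathrm{Sym}_3$, spanning by $\{\Id-\xi\otimes\xi\}$ is equivalent to linear independence of the six dyads; but then the representation $\Id=\sum_{\xi}c_\xi\,\xi\otimes\xi$ (with $c_\xi=c_{-\xi}$) is \emph{unique}, and since the axis pairs alone already provide such a representation, the unique solution assigns weight $0$ to your three added pairs. Equivalently, if an all-positive solution existed, subtracting it from the axis-only solution would give a nontrivial linear relation among the six dyads, so spanning would fail. Hence no choice of ``three further pairs'' works, however carefully chosen.

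The repair is cheap and leaves your Step 2 and your density argument untouched: you need a base configuration with more than six pairs, so that the linear system is underdetermined and admits a strictly positive solution. For instance, take the three axis pairs together with the six pairs $(e_i\pm e_j)/\sqrt2$, $i<j$; with weight $1/4$ on each axis vector and $1/8$ on each of the remaining twelve vectors one checks directly that $\sum_{\xi}c_\xi\,\xi\otimes\xi=\Id$ (and $\sum_\xi c_\xi=3$), while the nine distinct dyads span $\mathrm{Sym}_3$ because their differences produce all off-diagonal symmetric basis matrices. Now perturb the six irrational directions to nearby points of $\mathbb{S}^2\cap\mathbb{Q}^3$: spanning is an open condition, and a strictly positive solution of the identity persists because $\Psi$ remains surjective and one can follow the base solution by a fixed linear right inverse restricted to even tuples. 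From there your Step 2 is fine (and simpler than stated: $\Psi$ is linear, so $c(R):=c^0+L(R-\Id)$ with $L$ a linear right inverse is already the smooth, even, strictly positive family on some $\overline{B_{r_0}(\Id)}$, and $\gamma^{(\alpha)}_\xi:=\sqrt{c_\xi}$ gives \eqref{eq_geom_lemma}); running the construction with two disjoint rational families of equal cardinality yields $\Lambda_0,\Lambda_1$, and $n_0$ is defined as in the statement.
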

It is useful to denote by $n_0$ the smallest natural number such that $n_0\Lambda_{\alpha}\subseteq \mathbb{Z}$ for $ \alpha \in \{0,1\}$. 
\section{Stationary phase lemma}\label{AppD}
In estimating the various terms of the Reynolds stress, we will often employ the following result, cf. \cite[Proposition G.1]{BDLIS15}.
\begin{proposition} \label{lemma_stationary_phase}
    Let $\xi \in \mathbb{S}^{2}$ and $\lambda \in \N$ be fixed. For a smooth vector field $a \in C^{\infty}(\mbT^{3};\R^{3})$, let $F(x) := a(x) e^{i\lambda \xi \cdot x}$. Then we have for any $\varepsilon \in (0,1)$ and $N \in \mathbb{N}$
    \begin{align*}
        \| \mcR (F) \|_{C^{\varepsilon}} \lesssim \frac{\| a \|_{C^{0}}}{\lambda^{1-\varepsilon}} + \frac{[a]_{C^{N}}}{\lambda^{N-\varepsilon}} + \frac{[a]_{C^{N+\varepsilon}}}{\lambda^N},
    \end{align*}
where the implicit constant depends only on $\varepsilon$ and $N$.
\end{proposition}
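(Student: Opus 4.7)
This is a classical stationary-phase bound for inverse-divergence-type multipliers, and I would follow the standard Littlewood--Paley / Bernstein approach. The idea is that every entry of $\mathcal{R}$ is a Fourier multiplier whose symbol is smooth away from the origin and homogeneous of degree $-1$ (the pieces $\partial_k\Delta^{-1}$ have symbol $ik_k/|k|^2$, and $\partial_k\partial_l\Delta^{-1}\mathrm{div}\,\Delta^{-1}$ has symbol $-k_kk_lk_m/|k|^4$). Thus it suffices to prove the claim with $\mathcal{R}$ replaced by a scalar multiplier $T$ with symbol $m\in C^\infty(\R^3\setminus\{0\})$ satisfying $|\partial^\alpha m(k)|\lesssim |k|^{-1-|\alpha|}$. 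Note also that the mean-zero projection built into $\mathcal{R}$ is harmless: since $|\xi|=1$ we have $\int F\,dx=(2\pi)^3\hat a_{-\lambda\xi}$, which is already controlled by the $C^N$-decay of the Fourier coefficients of $a$ used below.

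The core step is to split the amplitude by a smooth radial Fourier cutoff $\chi$ with $\chi\equiv 1$ on $\{|\eta|\le 1/4\}$ and $\supp\chi\subset\{|\eta|\le 1/2\}$, writing $a=a_\flat+a_\sharp$ with $a_\flat(x):=\sum_{j\in\Z^3}\chi(j/\lambda)\hat a_j e^{ij\cdot x}$. After modulation by $e^{i\lambda\xi\cdot x}$, the Fourier support of $a_\flat e^{i\lambda\xi\cdot x}$ is contained in the annulus $|k|\sim\lambda$, where $|m(k)|\lesssim\lambda^{-1}$. A Bernstein estimate on this annulus then gives
\begin{equation*}
\|T(a_\flat e^{i\lambda\xi\cdot x})\|_{C^\varepsilon}\lesssim\lambda^\varepsilon\|T(a_\flat e^{i\lambda\xi\cdot x})\|_{C^0}\lesssim\lambda^{-1+\varepsilon}\|a\|_{C^0},
\end{equation*}
which produces the first term in the asserted bound.

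For the complementary high-frequency piece $a_\sharp$, I would use that $a_\sharp$ is a sum over frequencies $|j|\gtrsim\lambda/4$ combined with the standard Fourier-decay bound $|\hat a_j|\lesssim\min([a]_{C^N}/|j|^N,[a]_{C^{N+\varepsilon}}/|j|^{N+\varepsilon})$ obtained by $N$-fold integration by parts. Decomposing $a_\sharp e^{i\lambda\xi\cdot x}$ dyadically in $|j|$ and using $|m(k)|\lesssim |k|^{-1}$ together with a Bernstein bound $\lambda^\varepsilon$ on each shell to pass from $C^0$ to $C^\varepsilon$, the $j\sim\lambda$ shell is controlled by $[a]_{C^{N+\varepsilon}}/\lambda$, while the shells $|j|\gg\lambda$ sum to $[a]_{C^N}/\lambda^{N-\varepsilon}$, giving the remaining two terms.

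\textbf{Main obstacle.} The genuine difficulty is the $\lambda^\varepsilon$ loss and the need for $C^{N+\varepsilon}$ rather than $C^N$ at the critical dyadic shell. It reflects the fact that the Fourier multiplier $k\mapsto k_i/|k|^2$ is not bounded as a map $C^0\to C^1$ (the standard log-loss for Calder\'on--Zygmund operators at Hölder endpoints), only $C^0\to C^{1-\varepsilon}$. A naive integration-by-parts argument in physical space, that pulls out $\lambda^{-1}$ via $\xi\cdot\nabla e^{i\lambda\xi\cdot x}=i\lambda e^{i\lambda\xi\cdot x}$, would not respect this endpoint and would produce unbounded implicit constants. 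The Littlewood--Paley/Bernstein decomposition outlined above is precisely designed to absorb this loss into the $\lambda^\varepsilon$ factors appearing in each of the three terms on the right-hand side.
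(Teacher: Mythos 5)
Your plan is workable, but note first that the paper itself does not prove this proposition: it is quoted verbatim from \cite[Proposition G.1]{BDLIS15}. The proof there (and in the earlier De Lellis--Sz\'ekelyhidi papers) is different from yours: one expands $\mathcal{R}\big(a e^{i\lambda\xi\cdot x}\big)$ in inverse powers of $\lambda$ by repeated integration by parts based on $e^{i\lambda\xi\cdot x}=(i\lambda)^{-1}\xi\cdot\nabla e^{i\lambda\xi\cdot x}$, and controls the $N$-th remainder by Calder\'on--Zygmund/Schauder estimates in $C^{\varepsilon}$; the term $[a]_{C^{N+\varepsilon}}/\lambda$ is precisely the Schauder remainder of that expansion. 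Your Littlewood--Paley/Bernstein route (low/high splitting of $a$ at frequency $\sim\lambda$, kernel bounds for the degree $-1$ symbol on the modulated annulus, Bernstein to convert $C^0$ into $C^{\varepsilon}$ at the price of $\lambda^{\varepsilon}$) is a genuinely different and perfectly legitimate proof; carried out with sup-norm bounds on the dyadic blocks it even yields a slightly stronger estimate in which the $C^{N+\varepsilon}$ term is superfluous.

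Two steps are glossed and, as literally written, would not give the stated powers. First, on the critical shells $|j|\sim\lambda$ of $a_{\sharp}$ the modulated frequency $k=j+\lambda\xi$ can be of order one (e.g.\ $j\approx-\lambda\xi$), so $|m(k)|\lesssim|k|^{-1}$ yields no factor $\lambda^{-1}$ there; the bound $[a]_{C^{N+\varepsilon}}/\lambda$ for this shell is still true, but the mechanism is not smallness of the symbol. What saves you is that an operator with symbol smooth away from the origin and homogeneous of degree $-1$ has kernel $O(|x|^{-2})$ and is therefore bounded $C^{0}\to C^{\varepsilon}$ uniformly in $\lambda$ (alternatively, its truncation to $\{|k|\lesssim\lambda\}$ has $O(1)$ kernel $L^{1}$-norm by summing dyadic pieces $2^{-l}$), which combined with the block smallness $\|P_{\sim\lambda}a\|_{C^{0}}\lesssim\lambda^{-N}[a]_{C^{N}}$ (or $\lambda^{-N-\varepsilon}[a]_{C^{N+\varepsilon}}$) gives the shell bound with room to spare. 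Second, be careful not to run the high-frequency part through the pointwise coefficient decay $|\hat a_{j}|\lesssim[a]_{C^{N}}|j|^{-N}$ and absolute summation over each shell: that loses the lattice-point count $\sim(2^{m}\lambda)^{3}$ per shell and the claimed exponents fail (for small $N$ the sums do not even converge). You must instead use the $L^{\infty}$-based block estimate $\|P_{2^{m}\lambda}a\|_{C^{0}}\lesssim(2^{m}\lambda)^{-N}[a]_{C^{N}}$ (valid for integer $N$ because the block kernels have vanishing moments), which is what your Bernstein bookkeeping needs anyway. Finally, a cosmetic point: $F$ is $2\pi$-periodic, and hence $\mathcal{R}(F)$ is defined on $\mathbb{T}^{3}$, only when $\lambda\xi\in\mathbb{Z}^{3}$, which is the situation in which the proposition is applied in Section 3.
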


\section*{Acknowledgements}
M.R. and A.S. gratefully acknowledge the support by the German Research Foundation (DFG) through the IRTG 2235 and the SFB 1283, respectively. The authors would further like to thank Martina Hofmanov\'{a} for helpful discussions and the anonymous referees for their helpful comments that have improved the presentation of the article.

\printbibliography

 \end{document}